\numberwithin{equation}{section}
\theoremstyle{plain}
\newtheorem{theorem}{Theorem}[section]
\newtheorem{lemma}[theorem]{Lemma}
\newtheorem{corollary}[theorem]{Corollary}
  \theoremstyle{remark}
\newtheorem{remark}[theorem]{Remark}
  \theoremstyle{definition}
\newtheorem{definition}[theorem]{Definition}
\def\R{\mathbb{R}}
\def\Z{\mathbb{Z}}
\def\C{\mathbb{C}}
\DeclareMathOperator{\Ker}{Ker}
\DeclareMathOperator{\Rk}{Range}
\DeclareMathOperator{\spann}{span}
\def\eps{\varepsilon}
\def\ep{\varepsilon}
\def\a{\alpha}
\def\b{\beta}
\def\m{\mu}
\def\Om{\Omega}
\def\eps{\varepsilon}
\def\vphi{\varphi}
\def\vtheta{\vartheta}
\def\Hcal{\mathcal{H}}
\def\Ecal{\mathcal{E}}
\def\Qcal{\mathcal{Q}}
\def\Ucal{\mathcal{U}}
\def\icomp{\mathrm{i}}
\def\HcalC{{\Hcal_\C}}
\begin{document}

\title[]{Stable solitary waves with prescribed $L^2$-mass\\
for the cubic Schr\"odinger system\\ with trapping potentials}

\author[B. Noris]{Benedetta Noris}
\address{Benedetta Noris \newline \indent INdAM-COFUND Marie Curie Fellow,
\newline \indent Laboratoire de Math\'ematiques de Versailles, Universit\'{e} de Versailles Saint-Quentin,
\newline \indent 45 avenue des Etats-Unis,
78035 Versailles C\'edex, France.}
\email{benedettanoris@gmail.com}

\author[H. Tavares]{Hugo Tavares}
\address{Hugo Tavares \newline \indent CAMGSD, Instituto Superior T\'ecnico
\newline \indent Pavilh\~ao de Matem\'atica, Av. Rovisco Pais \newline \indent
1049-001 Lisboa, Portugal}
\email{htavares@math.ist.utl.pt}

\author[G. Verzini]{Gianmaria Verzini}
\address{Gianmaria Verzini \newline \indent Dipartimento di Matematica, Politecnico di Milano
\newline \indent piazza Leonardo da Vinci 32 \newline \indent
20133 Milano, Italy}
\email{gianmaria.verzini@polimi.it}

\date{\today}

\maketitle

\begin{abstract}
For the cubic Schr\"odinger system with trapping potentials in $\R^N$, $N\leq3$, or in bounded
domains, we
investigate the existence and the orbital stability of standing waves having components
with prescribed $L^2$-mass. We provide a variational characterization of such solutions, which
gives information on the stability through of a condition of Grillakis-Shatah-Strauss type.
As an application, we show existence of conditionally orbitally stable solitary waves when: a) the masses are small, for almost every scattering lengths, and b) in the defocusing, weakly interacting case, for any masses.
\end{abstract}

\section{Introduction}
Let $\Omega\subset\R^N$, $N \leq 3$, be either the whole space,  or a bounded Lipschitz domain, and
let us consider two trapping potentials $V_1$, $V_2$, satisfying
\begin{equation}\label{eq:V_assumptionT}
\tag{TraPot}
%\begin{array}{l}
V_i \in \mathcal{C}(\overline{\Omega}), \quad V_i\geq0,%\\ \text{if } \Omega=\R^N \text{ then}
\quad \lim_{|x|\to\infty} V_i(x)=+\infty,
%\end{array}
\end{equation}
for $i=1,2$ (the latter holding, of course, only for $\Omega=\R^N$).
In this paper we deal with solitary wave solutions to the following system of coupled
Gross-Pitaevskii equations:
\begin{equation}\label{eq:intro_sys}
\begin{cases}
\icomp\partial_t\Phi_1 + \Delta\Phi_1-V_1(x)\Phi_1 + ( \mu_1 |\Phi_1|^2 +\beta |\Phi_2|^2)\Phi_1=0\\
\icomp\partial_t\Phi_2 + \Delta\Phi_2-V_2(x)\Phi_2 + ( \mu_2 |\Phi_2|^2 +\beta |\Phi_1|^2)\Phi_2=0\\
\text{on }\Omega\times\R,\text{ with zero Dirichlet b.c. if }\Omega\text{ is bounded},
\end{cases}
\end{equation}
aiming at extending to systems part of the results that we obtained in a previous paper concerning
the single NLS \cite{NorisTavaresVerzini2013}.
Cubic Schr\"odinger systems like \eqref{eq:intro_sys} appear as a relevant model in different
physical contexts, such as nonlinear optics, fluid mechanics and Bose-Einstein condensation
(see for instance \cite{MR2090357,Sirakov2007} and the references provided there).
Their solutions show different qualitative behaviors depending on the sign of the
\emph{scattering lengths}
$\mu_1$, $\mu_2$, $\b$: when $\mu_i$ is positive (resp. negative), then the corresponding equation
is said to be \emph{focusing} (resp. \emph{defocusing}); when $\b$ is positive (resp. negative),
then the system is said to be \emph{cooperative} (resp. \emph{competitive}). Here we will deal
with almost any of these choices, apart from a few degenerate cases. More precisely, we will assume
that $(\mu_1,\mu_2,\beta)\in \R^3$ satisfies one of the following %non degeneracy
conditions:
\begin{equation}\label{eq:assumptions_nondegeneracyIntro}
\tag{NonDeg}
\begin{array}{lcl}
\mu_1 \cdot \mu_2<0& \text{ and }& \beta\in \R;\\
\mu_1,\mu_2\geq 0\text{, not both zero,}& \text{ and }& \beta\neq -\sqrt{\mu_1\mu_2};\\
\mu_1,\mu_2\leq 0\text{, not both zero,}& \text{ and }& \beta\neq \sqrt{\mu_1\mu_2}
\end{array}
%\begin{array}{l}
%\mu_1 \cdot \mu_2<0,\ \beta\in \R;\\
%\mu_1,\mu_2\geq 0,\text{ not both zero,}\quad \beta\neq -\sqrt{\mu_1\mu_2};\\
%\mu_1,\mu_2\leq 0,\ \beta\neq \sqrt{\mu_1\mu_2}\quad (\mu_i\neq 0\text{ for some }i=1,2).
%\end{array}
\end{equation}
(although partial results can be obtained also in certain complementary cases, see some of the
remarks along this paper).

We will seek solutions to system \eqref{eq:intro_sys} among functions which belong,
at each fixed time, to the energy space
\[
\HcalC=\left\{(\Phi_1,\Phi_2): \Phi_i \in H^1_0(\Omega,\C), \ \int_\Omega
(|\nabla \Phi_i|^2+V_i(x)\Phi_i^2)\, dx<\infty,\ i=1,2\right\},
\]
endowed with its natural norm
\[
\|(\Phi_1,\Phi_2) \|^2_\Hcal = \sum_{i=1}^2\int_\Omega (|\nabla \Phi_i|^2+V_i(x)|\Phi_i|^2)\, dx.
\]
In such context, the system preserves, at least formally, both the masses
\[
\Qcal(\Phi_i)=\int_\Omega |\Phi_i|^2\, dx,\qquad i=1,2,
\]
and the energy
\[
\Ecal(\Phi_1,\Phi_2) = \frac{1}{2}\|(\Phi_1,\Phi_2)\|_\Hcal^2 - F(\Phi_1,\Phi_2),
\]
where, for shorter notation, we let
\[
F(\Phi_1,\Phi_2) = \frac{1}{4}\int_\Omega (\mu_1 |\Phi_1|^4+2 \beta |\Phi_1|^2|\Phi_2|^2
+ \mu_2 |\Phi_2|^4)\, dx.
\]

Since we work in dimension $N\leq 3$, we have that the nonlinearity is energy subcritical;
furthermore, assumption \eqref{eq:V_assumptionT} implies that the embedding
\[
\HcalC\hookrightarrow L^p(\Omega;\C^2) \text{ is compact}
\]
for every $p<2^*=2N/(N-2)$ (for every $p$ if $N\leq2$), and hence, in particular, for $p=2,4$.
On the other hand, when $N=2$ the nonlinearity is $L^2$-critical, while when $N=3$ it is
$L^2$-supercritical. Indeed, we recall that the $L^2$-critical exponent is $1+4/N$, so that
cubic nonlinearities are $L^2$-subcritical only in dimension $N=1$. In general, the
behavior of the nonlinearity with respect to the $L^2$-critical exponent has strong influence
on the dynamics, at least in the focusing case (or in the
cooperative one), see for instance the book \cite{Cazenave2003}.

Letting $\Phi_i(x,t) = e^{\icomp  \omega_i t}U_i(x)$, where $(\omega_1,\omega_2)\in\R^2$ and
$(U_1,U_2)$ belongs to
\[
\mathcal{H}=\Hcal_\R:=\left\{ (u_1,u_2) : u_i \in H^1_0(\Omega;\R), \
\int_{\Omega}(|\nabla u_i|^2+V_i(x)u_i^2) \,dx<\infty, \ i=1,2 \right\},
\]
we have that solitary waves for \eqref{eq:intro_sys} can be obtained by solving the elliptic system
\begin{equation*}
\begin{cases}
-\Delta U_1+(V_1(x)+\omega_1) U_1=\m_1 U_1^3+\b U_1 U_2^2\\
-\Delta U_2+(V_2(x)+\omega_2) U_2=\m_2 U_2^3+\b U_2 U_1^2\\
(u_1,u_2)\in\Hcal.
\end{cases}
\end{equation*}
In doing this, two different points of view are considered in the literature:
on the one hand, one can consider the
\emph{chemical potentials} $\omega_i$ as given, and search for $(U_1,U_2)$ as critical points of the
action functional
\[
\mathcal{A}_{(\omega_1,\omega_2)}(U_1,U_2) = \mathcal{E}(U_1,U_2)
- \frac{\omega_1}{2}\mathcal{Q}(U_1) - \frac{\omega_2}{2}\mathcal{Q}(U_2);
\]
on the other hand, one can take also the coefficients $\omega_i$ to be unknown.
In this latter situation, it is natural to consider the masses $\mathcal{Q}(U_i)$
as given, so that $\omega_1,\omega_2$ can be understood as Lagrange multipliers
when searching for critical points of
\[
\mathcal{E}(U_1,U_2) \qquad \text{constrained to the manifold }
\mathcal{M}:=\{(U_1,U_2):\mathcal{Q}(U_i) = m_i\},
\]
$m_1,m_2>0$ (for further comments on this alternative, we refer to the discussion
in the introduction of  \cite{NorisTavaresVerzini2013}, and references therein).

Existence issues for the cubic elliptic system above (and for its autonomous counterpart)
have attracted, in the last decade, a great interest, and a huge amount of related results is
nowadays present in the literature. Most of them are concerned with the case of fixed chemical
potentials; as a few example we quote here the papers
\cite{MR2135447,MR2263573,MR2302730,MR2372993,Sirakov2007,MR2426142,MR2563622,MR2592975,
MR2629888,MR2587453,MR2849820,MR3097257,MR2997381,soave2013existence,MR3057151},
referring to their bibliography for an extensive list of references on this topic.

On the contrary, in this paper we consider the other point of view: given positive
$m_1,m_2$,
\begin{equation}\label{eq:intro_sys_solitary}
\text{to find }(U_1,U_2,\omega_1,\omega_2)\in\Hcal\times\R^2
\text{ s.t.}
\begin{cases}
-\Delta U_1+(V_1(x)+\omega_1) U_1=\m_1 U_1^3+\b U_1 U_2^2\\
-\Delta U_2+(V_2(x)+\omega_2) U_2=\m_2 U_2^3+\b U_2 U_1^2\\
\int_\Omega U_1^2=m_1,\ \int_\Omega U_2^2=m_2.
\end{cases}
\end{equation}
Up to our knowledge, only a few papers deal with the fixed masses approach:
essentially \cite{MR2090357,MR2928850,MR2901198,MR3148108}, all of which address
the defocusing, competitive case. This case is particularly favorable, since the
energy functional $\mathcal{E}$ is coercive (a non coercive case is considered in
\cite{MR3158981}, even though for a quite different Schr\"odinger system).
On the contrary, if at least one of the
scattering lengths is positive, then $\mathcal{E}$ is no longer coercive, and the behavior
of the nonlinearity with respect to the $L^2$-critical exponent becomes crucial.
Indeed, in the subcritical case (i.e. in dimension $N=1$), the constrained functional
$\mathcal{E}|_{\mathcal{M}}$ is still coercive, and bounded below. But if $N=2,3$, then
also $\mathcal{E}|_{\mathcal{M}}$ ceases to be coercive, and it becomes not bounded below.
This is the main difficulty in searching for critical points of $\mathcal{E}|_{\mathcal{M}}$,
indeed no ``trivial'' local minima for $\mathcal{E}|_{\mathcal{M}}$ can be identified, neither a
Nehari-type manifold seems available.

Once solitary waves are obtained, a natural question regards their stability properties.
The standard notion of stability, in this framework, is that of orbital stability, which we
recall in Section \ref{sec:stab} ahead. Orbital stability for power-type
Schr\"odinger systems has been investigated in several papers, among which we
mention \cite{MR1362765,MR2676639,MR2850761,MR2901813}. It is worth remarking that these papers
are settled on the whole $\R^N$, without trapping potentials (i.e. without compact embeddings);
for this reason, they are involved only in the $L^2$-subcritical case, in which the validity
of a suitable Gagliardo-Nirenberg
inequality can be exploited. We are not aware of any paper treating stability issues for nonlinear
Schr\"odinger systems with $L^2$-critical or supercritical nonlinearity, except for some
partial application in \cite{MR1081647}.

Our strategy to obtain solutions to problem \eqref{eq:intro_sys_solitary} consists in introducing
the following auxiliary maximization problem in $\Hcal$:
\[
M(\a,\rho_1,\rho_2) = \sup \left\{ F(u_1,u_2) : \|(u_1,u_2)\|_{\mathcal{H}}^2 = \a,\
\mathcal{Q}(u_1)=\rho_1, \ \mathcal{Q}(u_2)=\rho_2 \right\},
\]
where the positive parameters $\a$, $\rho_1$, $\rho_2$ are suitably fixed. Since both $F$
and the constraints are even, possible maximum points can be chosen to have non negative components, as we
will systematically (and often tacitly) do. As a matter of fact, the problem above
leads to a new variational characterization of solutions to \eqref{eq:intro_sys_solitary}.
In turn, such characterization contains information about the orbital stability of the corresponding
solitary waves.

Coming to the detailed description of our results, let us recall that the compact embedding
$\mathcal{H}\hookrightarrow L^2$ provides the existence of the principal eigenvalues $\lambda_{V_i}$
of $-\Delta + V_i$, which are positive. Our first result reads as follows.
\begin{theorem}\label{thm:intro1}
Let $V_1,V_2$ satisfy \eqref{eq:V_assumptionT} and  $\mu_1,\mu_2,\beta$ satisfy
\eqref{eq:assumptions_nondegeneracyIntro}.
If $\rho_1,\rho_2>0$ and $\a>\lambda_{V_1}\rho_1+\lambda_{V_2}\rho_2$ then
$M(\a,\rho_1,\rho_2)$ is achieved.
Besides, for every maximum point $(u_1,u_2)$ there exists
$(\omega_1,\omega_2,\gamma)\in\R^3$, with $\gamma>0$, such that
\begin{equation}\label{eq:Uu}
(\sqrt{\gamma} u_1, \sqrt{\gamma} u_2, \omega_1, \omega_2)
\quad\text{ solves \eqref{eq:intro_sys_solitary} with }\quad
m_1 = \gamma \rho_1,\ m_2 = \gamma \rho_2.
\end{equation}
\end{theorem}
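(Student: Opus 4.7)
The plan is to first analyze the relaxed problem
\[
\tilde M(\alpha,\rho_1,\rho_2) := \sup\bigl\{F(u_1,u_2) : \|(u_1,u_2)\|_{\mathcal{H}}^2\leq\alpha,\ \mathcal{Q}(u_i)=\rho_i\bigr\},
\]
whose constraint set is weakly sequentially closed, and then to argue that any maximizer automatically lies on the sphere $\{\|\cdot\|_{\mathcal{H}}^2=\alpha\}$. The constraint set is non-empty: with $\varphi_{V_i}$ the $L^2$-normalized principal eigenfunction of $-\Delta+V_i$, the pair $u_i=\sqrt{\rho_i}\,\varphi_{V_i}$ satisfies $\|u\|_{\mathcal{H}}^2=\lambda_{V_1}\rho_1+\lambda_{V_2}\rho_2<\alpha$. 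Finiteness of $\tilde M$ follows from the continuous embedding $\mathcal{H}\hookrightarrow L^4$, valid for $N\leq 3$. Existence of a maximizer $(u_1,u_2)$ is then standard: from any maximizing sequence one extracts weak convergence in $\mathcal{H}$, which upgrades to strong $L^2$ and $L^4$ convergence by the compact embeddings $\mathcal{H}\hookrightarrow L^p$ for $p<2^\ast$; the mass constraints and $F$ pass to the limit, while weak lower semicontinuity preserves $\|u\|_{\mathcal{H}}^2\leq\alpha$.

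The crucial step is to prove $\|u\|_{\mathcal{H}}^2=\alpha$. Assume instead $\|u\|_{\mathcal{H}}^2<\alpha$: then the norm constraint is inactive, so $(u_1,u_2)$ is a critical point of $F$ subject only to $\mathcal{Q}(u_i)=\rho_i$, and Lagrange multipliers yield
\[
\mu_i u_i^3+\beta u_i u_j^2=\lambda_i u_i\qquad \text{a.e.\ in }\Omega,\ i,j\in\{1,2\},\ i\neq j,
\]
with no differential operator appearing. Factoring $u_i$ (which I may take $\geq 0$, since $F$ and the constraints are even), on $\{u_i>0\}$ this is a pointwise linear system in $(u_1^2,u_2^2)$ with coefficient matrix $\bigl(\begin{smallmatrix}\mu_1&\beta\\ \beta&\mu_2\end{smallmatrix}\bigr)$. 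Under \eqref{eq:assumptions_nondegeneracyIntro}, either $\mu_1\mu_2-\beta^2\neq 0$, in which case $(u_1^2,u_2^2)$ equals a unique constant on the overlap and $u_i$ is constant where $u_j$ vanishes; or we are in the critical case $\beta=\pm\sqrt{\mu_1\mu_2}$ with $\mu_1\mu_2\geq 0$, where a rank-one reduction gives a single relation $\sqrt{|\mu_1|}\,u_1^2\pm\sqrt{|\mu_2|}\,u_2^2=\mathrm{const}$ on the positivity set. In each case, a suitable combination $w$ of $u_1^2$ and $u_2^2$ lies in $W^{1,4/3}_0(\Omega)$ and is piecewise constant; by rigidity of Sobolev functions on connected domains $w$ must be constant, and the trace-zero condition (respectively decay at infinity) forces $w\equiv 0$, whence $u_1\equiv u_2\equiv 0$, contradicting $\rho_i>0$.

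With $\|u\|_{\mathcal{H}}^2=\alpha$ established, the KKT conditions for the ball maximization supply $\lambda_0\geq 0$ and $\lambda_1,\lambda_2\in\mathbb{R}$ with
\[
\mu_i u_i^3+\beta u_i u_j^2=\lambda_0(-\Delta+V_i)u_i+\lambda_i u_i\qquad \text{in }\Omega;
\]
the case $\lambda_0=0$ is excluded by the argument above, so $\lambda_0>0$. Setting $\gamma=1/\lambda_0>0$, $\omega_i=\lambda_i/\lambda_0$, and $U_i=\sqrt{\gamma}\,u_i$, a direct scaling computation produces \eqref{eq:Uu}, with $\int_\Omega U_i^2=\gamma\rho_i=m_i$. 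The identity $\tilde M=M$ then follows since every maximizer of $\tilde M$ lies on the sphere, so $M$ is achieved as claimed. The main difficulty I anticipate is the rigidity step in the degenerate sub-case $\beta^2=\mu_1\mu_2$, still allowed by \eqref{eq:assumptions_nondegeneracyIntro} for one sign of $\beta$: here the algebraic reduction leaves a one-parameter family of admissible $(u_1,u_2)$, and one must leverage the Sobolev regularity of $u_i^2$ together with vanishing on $\partial\Omega$ (or decay at infinity) to still rule out non-trivial solutions.
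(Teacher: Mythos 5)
Your overall architecture coincides with the paper's: relax the norm constraint to an inequality, obtain a maximizer by weak compactness and the compact embeddings, show the constraint is saturated by ruling out the purely algebraic Lagrange condition, and then extract the multipliers. Two points differ. First, your treatment of the degenerate case $\beta=\pm\sqrt{\mu_1\mu_2}$ is genuinely different from (and arguably cleaner than) the paper's: you observe that $w=\sqrt{|\mu_1|}\,u_1^2+\sqrt{|\mu_2|}\,u_2^2\in W^{1,4/3}_0$ takes finitely many values, hence has vanishing gradient a.e., hence is constant on the connected domain and therefore zero. The paper instead uses approximate continuity of $H^1$ functions near $\partial\Omega$ (or near infinity) to first force $\omega_1=\omega_2=0$ and then derives $\mu_i\rho_i+\beta\rho_j=0$, contradicting \eqref{eq:assumptions_nondegeneracyIntro}; your route avoids that machinery. (In the non-degenerate case, be slightly careful with the sub-case $\mu_i=0$ allowed by \eqref{eq:assumptions_nondegeneracyIntro}: there $u_i$ is not pinned down on $\{u_j=0\}$ by the algebraic system, and one must first argue that $\{u_1>0\}\cap\{u_2>0\}$ is null before running the rigidity argument; the paper is equally terse here.)

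The one step I would not accept as written is the assertion that the KKT conditions ``supply $\lambda_0\geq0$''. In this infinite-dimensional, non-convex setting the sign condition on the multiplier of the inequality constraint is not free: without a constraint qualification one only obtains a Fritz John condition with a possibly vanishing multiplier on $F'$, and Slater-type arguments do not apply because the mass constraints are non-convex. What is needed is a variation that preserves both masses to first order and strictly decreases $\|\cdot\|^2_{\Hcal}$ --- equivalently, surjectivity onto $\R^3$ of the derivative of the full constraint map at the maximizer. This is exactly where the hypothesis $\a>\lambda_{V_1}\rho_1+\lambda_{V_2}\rho_2$ (together with the fact that a nonnegative maximizer has $\int_\Omega u_i\vphi_{V_i}\,dx>0$) does its real work, and it is the content of the paper's Lemmas \ref{lemma:U_alpha_manifold} and \ref{lemma:gamma>0}: the curve $w_i(t)=tu_i+s_i(t)\sqrt{\rho_i}\vphi_{V_i}$ is built explicitly, one computes $\tfrac{d}{dt}\|(w_1,w_2)\|^2_{\Hcal}|_{t=1}=\a-\lambda_{V_1}\rho_1-\lambda_{V_2}\rho_2>0$ while $\tfrac{d}{dt}F|_{t=1}\geq0$, and $\gamma>0$ follows. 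Your conclusion is correct, and your mechanism ($\lambda_0\geq0$ plus exclusion of $\lambda_0=0$ by the same rigidity lemma) is sound once this qualification is verified; but as written, the sign of $\gamma$ --- the only delicate part of the theorem --- is obtained from a black box whose hypotheses you have not checked.
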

In particular, by the maximum principle, $u_1,u_2$ can be chosen to be strictly positive
in the interior of $\Omega$. The proof of Theorem \ref{thm:intro1}
is fully detailed in Section \ref{sec:varpro}, where some further properties of $M$
are also described, such as the continuity with respect to $(\a,\rho_1,\rho_2)$. In Section
\ref{sec:stab} we turn to stability issues in connection with $M$. We prove the
following criterion for stability.
\begin{theorem}\label{thm:intro2}
Under the assumptions of Theorem \ref{thm:intro1}, let $\rho_1$, $\rho_2$ be fixed
and suppose that, for some $\alpha_1<\alpha_2$, there exists a $C^1$ curve
\[
(\alpha_1,\alpha_2)\ni \quad \alpha \mapsto
(u_1(\alpha),u_2(\alpha),\omega_1(\alpha),\omega_2(\alpha),\gamma(\alpha))\quad \in\Hcal\times \R^3,
\]
such that \eqref{eq:Uu} holds, and $(u_1(\alpha),u_2(\alpha))$ achieves
$M(\a,\rho_1,\rho_2)$ for every $\a\in(\alpha_1,\alpha_2)$.
If furthermore
\[
\alpha \mapsto \gamma(\alpha)\quad\text{ is strictly increasing}
\]
then the set of solitary wave solutions to \eqref{eq:intro_sys} associated with
$M(\a,\rho_1,\rho_2)$ (according to Theorem \ref{thm:intro1}) is orbitally stable,
for every $\a\in(\alpha_1,\alpha_2)$, among solutions
which enjoy both local existence, uniformly in the $\Hcal$ norm of the initial datum, and
conservation of masses and energy.

In particular, were $M(\a,\rho_1,\rho_2)$ uniquely achieved, then the corresponding
solitary wave is conditionally stable, in the sense just explained.
\end{theorem}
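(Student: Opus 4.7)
The plan is a Cazenave--Lions style compactness argument, in which the monotonicity of $\gamma$ is converted, via the Lagrange multiplier structure of the problem, into strict concavity of $\a\mapsto M(\a,\rho_1,\rho_2)$; this concavity is the key obstruction to loss of compactness along any putative destabilizing sequence.

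Fix $\bar\a\in(\a_1,\a_2)$, set $\bar\gamma = \gamma(\bar\a)$ and $U^* = (\sqrt{\bar\gamma}\,u_1(\bar\a),\sqrt{\bar\gamma}\,u_2(\bar\a))$, and let $\mathcal{O}$ denote the orbit of the solitary wave solutions associated with $M(\bar\a,\rho_1,\rho_2)$ under the phase action $(\Phi_1,\Phi_2)\mapsto(e^{\icomp\theta_1}\Phi_1,e^{\icomp\theta_2}\Phi_2)$. Arguing by contradiction, assume there exist $\eps>0$ and initial data $\Phi_0^n\to U^*$ in $\HcalC$ whose solutions leave the $\eps$-neighborhood of $\mathcal{O}$. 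Using uniform local existence, I take $t_n$ to be a first exit time: then $V^n := \Phi^n(t_n)$ lies on the boundary of this neighborhood, so $\|V^n\|_\Hcal$ is uniformly bounded and $\mathrm{dist}_\Hcal(V^n,\mathcal{O})=\eps$ (the value of $\eps$ will be chosen small at the end). Conservation of masses and energy yields $\Qcal(V^n_i)\to m_i := \bar\gamma\rho_i$ and $\Ecal(V^n)\to \Ecal(U^*)$, while the compact embedding $\HcalC\hookrightarrow L^2\cap L^4$ extracts a subsequence with $V^n\rightharpoonup V^\infty$ in $\HcalC$, $\Qcal(V^\infty_i)=m_i$, and $F(V^n)\to F(V^\infty)$. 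Setting $\tilde\a_n := \|V^n\|_\Hcal^2\to\tilde\a_\infty$, the limit of the energy identity reads
\[
F(V^\infty)=\tfrac12\tilde\a_\infty - \Ecal(U^*).
\]

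For the key step, define $\hat M(\a,m_1,m_2)$ as the natural analogue of $M$ with mass constraints $\Qcal(u_i)=m_i$ (extended to complex-valued pairs; a symmetrization together with the monotonicity $\partial_\a M>0$ along the curve shows this agrees with the real supremum). Then $V^n$ is a competitor, so $F(V^n)\le \hat M(\tilde\a_n,\Qcal(V^n_1),\Qcal(V^n_2))$. The scaling identity $\hat M(\a,s\rho_1,s\rho_2)=s^2 M(\a/s,\rho_1,\rho_2)$ combined with the continuity of $M$ from Section \ref{sec:varpro} transfers the bound to the limit: $F(V^\infty)\le \hat M(\tilde\a_\infty,m_1,m_2)$. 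Letting $G(s):=\hat M(s,m_1,m_2)-s/2$ and $\bar\a^*:=\|U^*\|_\Hcal^2=\bar\gamma\bar\a$, one obtains $G(\tilde\a_\infty)\ge G(\bar\a^*)$. A comparison of the Euler--Lagrange equation for $M(\a,\rho_1,\rho_2)$ with \eqref{eq:intro_sys_solitary} identifies the Lagrange multiplier of the $\Hcal$-norm constraint along the curve as $1/(2\gamma(\a))$; the envelope theorem then gives $M_\a(\a,\rho_1,\rho_2)=1/(2\gamma(\a))$, whence $G'(\bar\a^*)=0$ and $G''(\bar\a^*)<0$ \emph{precisely} because $\gamma'(\bar\a)>0$. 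Hence $\bar\a^*$ is a strict local maximum of $G$: choosing $\eps$ small enough that $\tilde\a_\infty$ falls into the concavity neighborhood forces $\tilde\a_\infty=\bar\a^*$, whence $F(V^\infty)=F(U^*)$ and $V^\infty\in\mathcal{O}$. Since $\|V^n\|_\Hcal^2\to\|V^\infty\|_\Hcal^2$, weak convergence upgrades to strong convergence $V^n\to V^\infty$ in the Hilbert space $\HcalC$, contradicting $\mathrm{dist}(V^n,\mathcal{O})=\eps$. The conditional stability of a unique maximizer is an immediate corollary.

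The hardest point is the envelope identity $M_\a(\a,\rho_1,\rho_2)=1/(2\gamma(\a))$ and the resulting strict concavity of $G$ at $\bar\a^*$: one must pass from the assumed $C^1$ regularity of the curve of maximizers to differentiability of $M(\cdot,\rho_1,\rho_2)$ along the curve, and then match its derivative to the Lagrange multiplier identified from the Euler--Lagrange equation. A secondary technical point is the propagation of the bound $F(V^n)\le \hat M(\cdots)$ to the limit with mass triples $(\tilde\a_n,\Qcal(V^n_1),\Qcal(V^n_2))$ whose last two entries only converge to $(m_1,m_2)$; this is handled by the scaling identity and the continuity of $M$.
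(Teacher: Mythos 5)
Your argument is correct and is essentially the paper's own (Shatah-style) proof: your envelope identity $M_\a=1/(2\gamma(\a))$ and the resulting strict local extremality of $\a\mapsto \a/2-\gamma M(\a,\rho_1,\rho_2)$ is exactly Lemma \ref{lemma:auxSTAB2}, and the compactness/contradiction step via the conservation laws, the compact embedding, and the identification of complex maximizers as phase rotations (Lemma \ref{lemma:uniqueness_complexcase}) is the proof of Theorem \ref{thm:stability_general}; your only deviations are cosmetic (you work with the rescaled $\hat M$ at masses $m_i=\bar\gamma\rho_i$ and the original system, while the paper works with $M$ at $(\rho_1,\rho_2)$ and the $\gamma^*$-rescaled system \eqref{eq:NLS_with_gamma}, and you package the norm confinement through a first-exit-time argument rather than the explicit barrier of Lemma \ref{lemma:auxSTAB1}). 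One caveat: the claim $G''(\bar\a^*)<0$ would require $\gamma'(\bar\a)>0$, which strict monotonicity of a $C^1$ curve does not guarantee --- but this is harmless, since your own computation gives $G'(s)=\tfrac12\bigl(\bar\gamma/\gamma(s/\bar\gamma)-1\bigr)$, which changes sign from positive to negative at $\bar\a^*$ by strict monotonicity alone and yields the strict local maximum directly (this first-derivative phrasing is precisely how the paper argues).
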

The strict monotonicity of a parameter, as a condition for stability,
is reminiscent of the abstract theory developed in
\cite{GrillakisShatahStrauss,MR1081647}. In fact, our proof is inspired by
the classical paper by Shatah \cite{Shatah1983}. Observe that we only stated the
\emph{conditional} nonlinear orbital stability, where the condition is that the solution
of system \eqref{eq:intro_sys} corresponding to an initial datum $(\phi_1,\phi_2)$
exists locally in time, with the time interval uniform in $\|(\phi_1,\phi_2)\|_{\Hcal}$,
and that the masses and the energy are preserved. In fact, these properties are known to be
true for every initial datum in $\Hcal$, at least when some further restrictions about
$V_i$, $\mu_i$, $\beta$ are assumed, see for instance \cite[Chapters 3 and 4]{Cazenave2003}.
However, being the field so vast, even a rough summary of well-posedness for
Schrodinger systems with potential is far beyond the scopes of this paper. We refer the
interested reader to the entry
\href{http://wiki.math.toronto.edu/DispersiveWiki/index.php/NLS_with_potential}{``NLS with
potential''}
in the \emph{DispersiveWiki} project webpage \cite{wikidisp}
(as well as to the entries
\href{http://wiki.math.toronto.edu/DispersiveWiki/index.php/Cubic_NLS_on_R2}{``Cubic NLS
on $\R^{2}$''},
\href{http://wiki.math.toronto.edu/DispersiveWiki/index.php/Cubic_NLS_on_R3}{``Cubic NLS
on $\R^{3}$''}).

Finally, in Section \ref{sec:appl} we provide two applications of Theorems \ref{thm:intro1},
\ref{thm:intro2}, proving, in some particular cases, existence of orbitally stable solitary
wave solutions to \eqref{eq:intro_sys} having prescribed masses.

Our first application deals with the case of small masses. In Section \ref{subsec:AP} we prove
the following.
\begin{theorem}\label{thm:intro3}
Let assumptions \eqref{eq:V_assumptionT}, \eqref{eq:assumptions_nondegeneracyIntro} hold.
For every $k\geq 1$ there exists $\bar m>0$, such that for every positive $m_1,m_2$ satisfying
\[
\frac{1}{k}\leq \frac{m_2}{m_1} \leq{k},\qquad
m_1+m_2 \leq \bar m,
\]
there exists $(U_1,U_2,\omega_1,\omega_2)\in\Hcal\times\R^2$, with $U_i$ positive in
$\Omega$, solution to \eqref{eq:intro_sys_solitary}. Furthermore, the corresponding solitary wave
\[
(\Phi_1(x,t),\Phi_2(x,t)) = (e^{\icomp  \omega_1 t}U_1(x),e^{\icomp  \omega_2 t}U_2(x))
\]
is conditionally orbitally stable for system \eqref{eq:intro_sys},
in the sense of Theorem \ref{thm:intro2}.
\end{theorem}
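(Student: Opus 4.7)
Given masses $m_1, m_2$ with ratio $r := m_2/m_1 \in [1/k,k]$, I set $\rho_1 = 1$, $\rho_2 = r$, and put $\alpha_0 := \lambda_{V_1} + r\lambda_{V_2}$; Theorem \ref{thm:intro1} yields, for each $\alpha > \alpha_0$, a positive maximizer $(u_1^\alpha, u_2^\alpha)$ of $M(\alpha, 1, r)$ producing a solution to \eqref{eq:intro_sys_solitary} with masses $(\gamma(\alpha), r\gamma(\alpha))$. The plan is to exhibit a $C^1$ curve of maximizers on a right neighborhood $(\alpha_0, \alpha_0 + \delta)$ along which $\gamma(\alpha)$ is strictly increasing and sweeps an interval $(0, \gamma_0)$; choosing $\bar m$ small enough that $m_1 < \gamma_0$ holds uniformly in $r \in [1/k,k]$, one inverts $\gamma$ to locate the right $\alpha$ and invokes Theorem \ref{thm:intro2}.

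To build the branch I apply the implicit function theorem to \eqref{eq:intro_sys_solitary} after the mass-normalizing substitution $U_i = \sqrt{m_i}\, v_i$, $\int v_i^2 = 1$, which turns the system into
\[
(-\Delta + V_i + \omega_i)v_i = \mu_i m_i v_i^3 + \beta m_{3-i} v_i v_{3-i}^2, \qquad \int v_i^2 = 1.
\]
At $m = 0$ the unique positive solution is $(v_1, v_2, \omega_1, \omega_2) = (\phi_1, \phi_2, -\lambda_{V_1}, -\lambda_{V_2})$. Setting $A_i := -\Delta + V_i - \lambda_{V_i} \ge 0$ with $\Ker A_i = \spann \phi_i$, any element of the kernel of the linearization satisfies $A_i \tilde v_i + \tilde \omega_i \phi_i = 0$ and $\int \phi_i \tilde v_i = 0$; testing the first equation against $\phi_i$ gives $\tilde \omega_i = 0$, the Fredholm alternative then forces $\tilde v_i \in \spann \phi_i$, and the constraint yields $\tilde v_i = 0$. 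Surjectivity follows by the same Fredholm argument, so IFT produces a smooth positive branch $(v_1, v_2, \omega_1, \omega_2)(m)$ for $\|m\|$ small.

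To match the IFT branch with the maximizer branch of Theorem \ref{thm:intro1}, decompose $u_i = a_i \phi_i + w_i$ with $w_i \perp_{L^2} \phi_i$; from $\int u_i^2 = \rho_i$ and the identity $\|u_i\|_{H^1_{V_i}}^2 = \lambda_{V_i} \int u_i^2 + \langle A_i w_i, w_i\rangle$ one obtains
\[
\alpha - \alpha_0 = \langle A_1 w_1, w_1\rangle + \langle A_2 w_2, w_2\rangle,
\]
which forces $w_i \to 0$ in $H^1$ (hence $a_i \to \sqrt{\rho_i}$) as $\alpha \downarrow \alpha_0$. After unscaling, $v_i^\alpha := u_i^\alpha/\sqrt{\rho_i} \to \phi_i$; a parallel argument on the chemical potentials (multiplying the PDE for $U_i^\alpha$ by $U_i^\alpha$) gives $\omega_i^\alpha \to -\lambda_{V_i}$. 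By local uniqueness in IFT, the maximizer coincides with the IFT solution for $\alpha$ in some $(\alpha_0, \alpha_0 + \delta)$, furnishing the required $C^1$ curve.

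Finally, I parametrize this joint branch by $\epsilon := m_1$ (so $\gamma(\alpha) = \epsilon$) and expand $v_i = \phi_i + \epsilon \eta_i + O(\epsilon^2)$ with $\eta_i \perp_{L^2} \phi_i$ solving
\[
A_1 \eta_1 = P^\perp\bigl(\mu_1 \phi_1^3 + \beta r \phi_1 \phi_2^2\bigr),\qquad A_2 \eta_2 = P^\perp\bigl(\mu_2 r \phi_2^3 + \beta \phi_1^2 \phi_2\bigr),
\]
with $P^\perp$ the $L^2$-orthogonal projection onto $\phi_i^\perp$. A direct expansion of the constraint $\|(u_1, u_2)\|_\Hcal^2 = \alpha$ yields
\[
\alpha(\epsilon) = \alpha_0 + c(r)\epsilon^2 + O(\epsilon^3),\qquad c(r) := \langle A_1 \eta_1, \eta_1 \rangle + r \langle A_2 \eta_2, \eta_2\rangle \ge 0,
\]
so that $\gamma(\alpha) \sim \sqrt{(\alpha-\alpha_0)/c(r)}$ is strictly increasing provided $c(r) > 0$. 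The \emph{main obstacle} is precisely the strict positivity of $c(r)$: simultaneous vanishing of $\eta_1, \eta_2$ would force the pointwise constraints $\mu_i \phi_i^2 + \beta r^{\pm 1} \phi_{3-i}^2 \equiv \mathrm{const}$, a linear system in $(\phi_1^2, \phi_2^2)$ with determinant $r(\mu_1\mu_2 - \beta^2)$, which is incompatible either with the non-constancy of the ground states (when the determinant is non-zero) or with the sign conditions of \eqref{eq:assumptions_nondegeneracyIntro} (in the singular case $\beta^2 = \mu_1 \mu_2$). Continuous dependence of $c(r)$ and $\delta(r)$ on the compact range $r \in [1/k, k]$ produces uniform $\gamma_0$ and $\bar m$, and Theorem \ref{thm:intro2} then concludes.
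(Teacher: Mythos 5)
Your proposal is correct in substance but follows a genuinely different route from the paper. The paper treats the problem as a \emph{singular} inversion in the parametrization by $\alpha$: it sets up the map $\Phi$ of \eqref{eq:definition_of_Phi} on the trivial branch of rescaled eigenfunctions, verifies that each point of that branch is an ordinary singular point (one-dimensional kernel, codimension-one range, positive second derivative in the kernel direction), and invokes a parametric Ambrosetti--Prodi theorem to produce exactly two solutions $x_\pm(\eps,\vtheta)$ with $t_\pm\sim\pm\sqrt\eps$ and the derivative bound $\partial_\eps t_+\geq C/\sqrt\eps$, from which $\gamma_+'>0$ follows. You instead observe that the same problem becomes \emph{regular} when parametrized by the masses themselves (after the normalization $U_i=\sqrt{m_i}\,v_i$), apply the ordinary implicit function theorem at $m=0$, and then recover the $\alpha$-parametrization by expanding $\alpha(\eps)=\alpha_0+c(r)\eps^2+O(\eps^3)$ and inverting on $\eps>0$. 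The two approaches hinge on the \emph{same} nondegeneracy computation: your constant $c(r)=\langle A_1\eta_1,\eta_1\rangle+r\langle A_2\eta_2,\eta_2\rangle$ is, up to normalization, exactly the quantity $\sum_i\bigl(\|\psi_i\|^2_{\Hcal}-\lambda_{V_i}\|\psi_i\|^2_{L^2}\bigr)$ of Lemma \ref{lem:AP_ass_are_satisfied}(c), and your determinant argument ruling out $\eta_1=\eta_2=0$ (with the separate discussion of $\beta^2=\mu_1\mu_2$ via the sign structure of \eqref{eq:assumptions_nondegeneracyIntro} and the vanishing of $\vphi_{V_i}$ at the boundary) is precisely what justifies the paper's terse claim that $\psi_i\not\equiv0$. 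Your route is arguably more elementary, at the price of doing by hand the matching between the IFT branch and the branch of maximizers; the paper pays instead the price of the abstract singularity-theory machinery but gets the local solution structure (including the spurious branch $\gamma_-<0$) for free.

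One step you should flesh out is the matching itself. To place a maximizer of $M(\alpha,1,r)$, for $\alpha$ close to $\alpha_0$, inside the IFT uniqueness neighborhood you need not only $v_i^\alpha\to\vphi_{V_i}$ and $\omega_i^\alpha\to-\lambda_{V_i}$ (which your decomposition $\alpha-\alpha_0=\sum_i\langle A_iw_i,w_i\rangle$ and the testing argument do give, uniformly over all maximizers) but also $\gamma(\alpha)\to0$, since the IFT uniqueness is stated for small $|m|=\gamma(\rho_1+\rho_2)$. Boundedness of $\gamma$ is Lemma \ref{lemma:bounded_quantities}; the identification $\gamma\to0$ uses \eqref{eq:assumptions_nondegeneracyIntro} exactly as in Lemma \ref{lem:ass1AP} (a nonzero limit would force $\mu_i\vphi_{V_i}^2+\beta\vphi_{V_j}^2\equiv\mathrm{const}$, excluded by the same determinant/sign argument you already use for $c(r)>0$). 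With that in place, the injectivity of $\eps\mapsto\alpha(\eps)$ on $\eps>0$ together with $\gamma>0$ (Lemma \ref{lemma:gamma>0}) yields uniqueness of the nonnegative maximizer, i.e.\ hypothesis (M1), and Theorem \ref{thm:intro2} applies as you indicate.
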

We remark that, apart from condition \eqref{eq:assumptions_nondegeneracyIntro}, no restriction
about $\mu_1$, $\mu_2$ and $\b$ is required. In order to prove such theorem, we will exploit a
parametric version of a classical result by Ambrosetti and Prodi \cite{AmbrosettiProdipaper}
about the inversion of maps with
singularities, see Theorem \ref{thm:AmbrosettiProdi_parametric} below. In particular, we
rely on the fact that, if $m_1/m_2$ is fixed, our problem can be reduced to an inversion
of a map near an ordinary singular point, while this property is lost if one of the masses
vanish. This is the reason for the restriction on $m_1/m_2$. On the other hand, when
one mass vanishes, the system reduces to a single equation: since we already treated
successfully this case in \cite{NorisTavaresVerzini2013}, it is presumable that the result
should hold without such restriction.

As a last application, in Section \ref{sec:defocusing} we deal with the case of defocusing,
weakly interacting systems, meaning that $\mu_1,\mu_2$ are negative and $\b^2<\mu_1\mu_2$.
In such case, Theorems \ref{thm:intro1} and \ref{thm:intro2} provide, for every choice of
the masses $m_1,m_2$, the existence of a unique solitary wave, and its stability,
see Theorem \ref{thm:defocusing_weak_interaction} below.
As we mentioned, in this case $\mathcal{E}$ is coercive and bounded below, so that existence
can be obtained also by the direct method, as already done in \cite{MR2090357}.
For the same reason, stability is
somewhat expected, even though it can not be obtained directly, due to the lack of
a suitable Gagliardo-Nirenberg inequality in dimension $N=2,3$.

As a final remark, let us mention that in the proofs of Theorems \ref{thm:intro1} and
\ref{thm:intro2} we use the compact embedding $\mathcal{H}\hookrightarrow L^p$
just to pass from weak to strong convergence, for maximizing sequences associated
to $M$. In the relevant case $\Omega=\R^N$, $V_i\equiv 1$, such compactness does not
hold, but one could try to adapt the same strategy by using a concentration-compactness type
argument. In conclusion, it is our belief that Theorems \ref{thm:intro1} and
\ref{thm:intro2} should hold in a more general situation, however this falls out
of the scopes of the present paper.

\subsection*{Notations and preliminaries}
In the following, we will say that a pair $(u_1,u_2)$ is positive (nonnegative) if both
$u_1$ and $u_2$ are. We remark that, whenever $\mathcal{Q}(u_1)$, $\mathcal{Q}(u_2)$ are
fixed to be positive, then both trivial and semitrivial pairs are excluded.

As we already noticed, the embedding $\mathcal{H}\hookrightarrow L^p$ is compact, for $p$
Sobolev subcritical. In turn,
the compact embedding implies the existence of a first eigenvalue. In the following we denote
by $\varphi_{V_i}$ the unique nonnegative function which achieves
\[
\lambda_{V_i}=\inf \left\{ \int_\Omega \left(|\nabla\varphi|^2+V_i(x) \varphi^2\right)\,dx:\ \int_
\Omega \varphi^2\,dx=1 \right\}.
\]
We remark that $\lambda_{V_i}>0$ by assumption \eqref{eq:V_assumptionT} (in fact, the positivity
assumption there may be replaced by the requirement that $V_i$ is bounded from below, by
performing a change of gauge $\Phi_i \rightsquigarrow \Phi_i \exp[\icomp t\inf V_i]$).
In such arguments, the compactness of the embedding is immediate if $\Omega$ is bounded;
in case $\Omega=\R^N$, it can be obtained in a rather standard way, for instance mimicking the
proof of \cite[Proposition 6.1]{MR1260616}, which is performed in the particular case $V_i(x)=|x|^2$.

Throughout the paper, ``$\icomp$'' indicates the imaginary unit, while $i$ and $j$ stand for indexes
between $1$ and $2$, with $j\neq i$.
Finally, we denote with $C$ any positive constant we need not to specify, which may change its value
even within the same expression.

\section{A variational problem}\label{sec:varpro}
Throughout this section, $\mu_1,\mu_2, \beta$ satisfy assumption
\eqref{eq:assumptions_nondegeneracyIntro} while $V_1,V_2$ satisfy
assumption \eqref{eq:V_assumptionT}. For $(u_1,u_2)\in\mathcal{H}$, recall that
\[
F(u_1,u_2)=\int_\Omega \left( \mu_1 \frac{u_1^4}{4}+\beta \frac{u_1^2 u_2^2}{2} +\mu_2 \frac{u_2^4}{4}
\right) \,dx.
\]
We consider the following maximization problem
\begin{equation}\label{eq:M_alpha}
M(\a,\rho_1,\rho_2)=\sup_{\mathcal{U}(\a,\rho_1,\rho_2)} F(u_1,u_2)
\end{equation}
where, for  $\rho_1,\rho_2>0$ and $\a\geq\lambda_{V_1}\rho_1+\lambda_{V_2}\rho_2$, we define
\[
\mathcal{U}(\a,\rho_1,\rho_2)=\left\{ (u_1,u_2)\in \mathcal{H}: \begin{array}{ll}
\|(u_1,u_2)\|_{\mathcal{H}}^2 \leq\a, \\[0.1cm]
\int_\Omega u_i^2\,dx=\rho_i, \ i=1,2
\end{array}
\right\}.
\]
As we will see in a moment, under assumption \eqref{eq:assumptions_nondegeneracyIntro}, this
definition of $M$  is equivalent to the one given in the introduction.
\begin{remark}\label{rem:trivial_constraint}
For $\a = \lambda_{V_1}\rho_1+\lambda_{V_2}\rho_2$ we have that
\[
\mathcal{U}(\a,\rho_1,\rho_2)=\left\{ ((-1)^l\sqrt{\rho_1}\vphi_{V_1},
(-1)^m\sqrt{\rho_2}\vphi_{V_2}) : (l,m)\in\{0,1\}^2\right\},
\]
thus $F$ is constant in $\mathcal{U}$ and $M$ is trivially achieved. Of course, if
$\a < \lambda_{V_1}\rho_1+\lambda_{V_2}\rho_2$ then $\mathcal{U}$ is empty.
\end{remark}
\begin{lemma}\label{lemma:U_alpha_manifold}
For every $\a>\lambda_{V_1}\rho_1+\lambda_{V_2}\rho_2$, the set
\[
\tilde{\mathcal{U}}(\a,\rho_1,\rho_2)=\left\{ (u_1,u_2)\in \mathcal{U}(\a,\rho_1,\rho_2): \begin{array}{ll}
\|(u_1,u_2)\|_{\mathcal{H}}^2=\a, \\[0.1cm]
\int_\Omega u_i\varphi_{V_i}\,dx\neq0 \ i=1,2
\end{array}
\right\}
\]
is a submanifold of $\mathcal{H}$ of codimension 3.
\end{lemma}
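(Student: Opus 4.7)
The plan is to apply the regular value theorem to the $C^1$ map
\[
G \colon \Hcal \to \R^3, \qquad G(u_1,u_2) = \left( \|(u_1,u_2)\|_\Hcal^2,\ \Qcal(u_1),\ \Qcal(u_2) \right).
\]
Since the open condition $\int_\Omega u_i \vphi_{V_i}\,dx \neq 0$ ($i=1,2$) cuts out an open subset of $\Hcal$, it suffices to show that $dG_{(u_1,u_2)}$ is surjective at every point $(u_1,u_2)\in\tilde\Ucal(\a,\rho_1,\rho_2)$, and then locally $\tilde\Ucal$ is a $C^1$ submanifold of codimension~$3$.

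Surjectivity of $dG$ onto $\R^3$ is equivalent to the linear independence of the three differentials. Suppose $c_0,c_1,c_2\in\R$ satisfy
\[
c_0\, d\|(\cdot)\|_\Hcal^2 + c_1\, d\Qcal(u_1) + c_2\, d\Qcal(u_2) = 0 \quad\text{on }\Hcal.
\]
Testing against $(v_1,0)$ with arbitrary $v_1$ in the natural weighted space of $u_1$, and then against $(0,v_2)$, this amounts to $u_1$ and $u_2$ being weak solutions of
\[
c_0(-\Delta u_i + V_i u_i) + c_i u_i = 0, \qquad i=1,2.
\]
If $c_0=0$, then $c_i u_i \equiv 0$; since $\Qcal(u_i)=\rho_i>0$ implies $u_i\not\equiv 0$, we deduce $c_1=c_2=0$.

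If instead $c_0 \neq 0$, each $u_i$ is a nontrivial eigenfunction of $-\Delta+V_i$ associated with the eigenvalue $\sigma_i := -c_i/c_0$. Orthogonality of eigenfunctions in $L^2$ forces $\sigma_i=\la_{V_i}$: otherwise $u_i$ would be orthogonal to $\vphi_{V_i}$, contradicting the nondegeneracy assumption $\int_\Omega u_i \vphi_{V_i}\,dx \neq 0$. The simplicity of the principal eigenvalue $\la_{V_i}$ then gives $u_i = t_i \vphi_{V_i}$, and the constraint $\Qcal(u_i)=\rho_i$ fixes $t_i^2=\rho_i$. Consequently,
\[
\|(u_1,u_2)\|_\Hcal^2 = \la_{V_1}\rho_1 + \la_{V_2}\rho_2,
\]
which contradicts the hypothesis $\a>\la_{V_1}\rho_1+\la_{V_2}\rho_2$. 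Hence $c_0\neq 0$ is impossible, the three differentials are linearly independent, and $dG$ is surjective at every point of $\tilde\Ucal$.

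The only mildly delicate point is the eigenfunction argument in the second case; everything else is immediate from the Hilbert-space smoothness of $G$. Note that the strict inequality $\a > \la_{V_1}\rho_1+\la_{V_2}\rho_2$ is essential and matches Remark~\ref{rem:trivial_constraint}, where the equality case gives precisely the degenerate configurations $u_i = \pm\sqrt{\rho_i}\vphi_{V_i}$ at which $dG$ fails to be surjective.
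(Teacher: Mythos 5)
Your proof is correct and uses the same overall framework as the paper --- the regular value theorem applied to the constraint map $G=\bigl(\|\cdot\|_{\Hcal}^2,\Qcal(u_1),\Qcal(u_2)\bigr)$, with the condition $\int_\Omega u_i\vphi_{V_i}\,dx\neq0$ handled as an open restriction --- but you verify the key surjectivity step by a genuinely different argument. The paper proceeds directly: it evaluates $G'(u_1,u_2)$ on the three test directions $(u_1,u_2)$, $(\vphi_{V_1},0)$, $(0,\vphi_{V_2})$, and the resulting $3\times3$ matrix has determinant proportional to $\int_\Omega u_1\vphi_{V_1}\,dx\cdot\int_\Omega u_2\vphi_{V_2}\,dx\cdot\bigl(\a-\lambda_{V_1}\rho_1-\lambda_{V_2}\rho_2\bigr)\neq0$. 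You argue dually: a vanishing combination $c_0\,d\|\cdot\|_{\Hcal}^2+c_1\,d\Qcal(u_1)+c_2\,d\Qcal(u_2)=0$ forces each $u_i$ to be a weak eigenfunction of $-\Delta+V_i$; the hypothesis $\int_\Omega u_i\vphi_{V_i}\,dx\neq0$ combined with $L^2$-orthogonality of distinct eigenspaces pins the eigenvalue to $\lambda_{V_i}$, and then $\|(u_1,u_2)\|_{\Hcal}^2=\lambda_{V_1}\rho_1+\lambda_{V_2}\rho_2$ contradicts the strict inequality on $\a$. Both routes consume exactly the same two hypotheses; the paper's determinant check is a two-line verification, while yours is slightly longer but more structural, since it identifies exactly where $dG$ degenerates (the principal-eigenfunction pairs of Remark~\ref{rem:trivial_constraint}, as you note). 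Two cosmetic points: the detour through simplicity of $\lambda_{V_i}$ is unnecessary --- once $\sigma_i=\lambda_{V_i}$, testing the eigenvalue equation with $u_i$ already gives $\int_\Omega(|\nabla u_i|^2+V_iu_i^2)\,dx=\lambda_{V_i}\rho_i$ --- and, like the paper, you should record that $\tilde\Ucal(\a,\rho_1,\rho_2)$ is nonempty, since the lemma is subsequently used to run Lagrange multipliers on it.
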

\begin{proof}
It is easy to see that $\tilde{\mathcal{U}}$ is not empty. Letting
\[
G(u_1,u_2)=\left(\int_\Omega u_1^2\,dx-\rho_1, \, \int_\Omega u_2^2\,dx-\rho_2, \,
\|(u_1,u_2)\|_{\mathcal{H}}^2-\alpha \right),
\]
it suffices to prove that for every $u\in\tilde{\mathcal{U}}(\a,\rho_1,\rho_2)$ the range of
$G'(u_1,u_2)$ is $\R^3$. This can be checked by evaluating $G'(u_1,u_2)[\phi_1,\phi_2]$ with
$(\phi_1,\phi_2)$ equal to $(u_1,u_2)$, $(\varphi_{V_1},0)$ and $(0,\varphi_{V_2})$ respectively, and
recalling that $\a\neq\lambda_{V_1}\rho_1+\lambda_{V_2}\rho_2$.
\end{proof}

\begin{lemma}\label{lemma:maximizer_solves_sys}
For every $\a\geq\lambda_{V_1}\rho_1+\lambda_{V_2}\rho_2$ \eqref{eq:M_alpha} is achieved.
Moreover, every maximum $(u_1,u_2)$ belongs to $\tilde{\mathcal{U}}(\a,\rho_1,\rho_2)$, and there exist
$\omega_1,\omega_2,\gamma\in\R$ such that
\begin{equation}\label{eq:system_main}
-\Delta u_i+(V_i(x)+\omega_i) u_i=\gamma(\m_i u_i^3+\b u_i u_j^2), \quad i=1,2,\, j\neq i.
\end{equation}
\end{lemma}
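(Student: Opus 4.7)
The plan is to maximize by the direct method, then verify the maximum lies on $\tilde{\mathcal{U}}$ so that the Euler--Lagrange equations on the codimension-$3$ manifold from Lemma~\ref{lemma:U_alpha_manifold} yield the required system.

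First I establish existence. For any $(u_1,u_2)\in\mathcal{U}(\alpha,\rho_1,\rho_2)$, the bounds $\|(u_1,u_2)\|_{\mathcal{H}}^2\leq\alpha$ and $\int u_i^2=\rho_i$ control $\int u_i^4$ via the continuous embedding $\mathcal{H}\hookrightarrow L^4$, so $F$ is bounded above on $\mathcal{U}$. A maximizing sequence $(u_1^n,u_2^n)$ is bounded in $\mathcal{H}$ and, up to a subsequence, converges weakly to some $(u_1,u_2)\in\mathcal{H}$. Since $N\leq 3$ and \eqref{eq:V_assumptionT} holds, the embedding $\mathcal{H}\hookrightarrow L^p$ is compact for $p=2,4$: strong $L^2$-convergence preserves the mass constraints, and strong $L^4$-convergence gives $F(u^n)\to F(u_1,u_2)=M$. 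Weak lower semicontinuity of the norm gives $\|(u_1,u_2)\|_{\mathcal{H}}^2\leq\alpha$, so $(u_1,u_2)\in\mathcal{U}$ attains $M$.

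To place $(u_1,u_2)$ in $\tilde{\mathcal{U}}$, I first replace $u_i$ with $|u_i|$ (which leaves $F$ and the constraints invariant) to assume $u_1,u_2\geq 0$; since $\varphi_{V_i}>0$ in $\Omega$ and $u_i\not\equiv 0$, we get $\int u_i\,\varphi_{V_i}>0$. To show $\|(u_1,u_2)\|_{\mathcal{H}}^2=\alpha$, suppose by contradiction this inequality is strict. Then $(u_1,u_2)$ is interior to the $\mathcal{H}$-ball and is a critical point of $F$ restricted to the codimension-$2$ manifold $\{(v_1,v_2):\int v_i^2=\rho_i\}$. Lagrange multipliers produce $\omega_1,\omega_2\in\R$ with
\begin{equation*}
u_i(x)\bigl(\mu_i u_i^2(x)+\beta u_j^2(x)-\omega_i\bigr)=0\quad\text{a.e.\ in }\Omega,\qquad i=1,2,\ j\neq i.
\end{equation*}

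The main obstacle is to show that this pointwise identity is incompatible with $(u_1,u_2)\in\mathcal{H}$ satisfying $\int u_i^2=\rho_i>0$, under \eqref{eq:assumptions_nondegeneracyIntro}. When $\mu_1\mu_2\neq\beta^2$, the $2\times 2$ linear system on $\{u_1>0\}\cap\{u_2>0\}$ has a unique constant solution, and the one-variable conditions on $\{u_i>0,u_j=0\}$ force $u_i$ to be a further constant; thus each $u_i$ takes only finitely many values and, being in $H^1_0(\Omega)$, must vanish (using the Dirichlet condition for bounded $\Omega$, or the decay enforced by the trapping potential for $\Omega=\R^N$), contradicting $\int u_i^2=\rho_i>0$. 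The remaining subcases permitted by \eqref{eq:assumptions_nondegeneracyIntro} with $\mu_1\mu_2=\beta^2$ are handled by a refinement: the two equations either become inconsistent (forcing essentially disjoint supports, on which one-variable conditions again produce piecewise constant $u_i$) or collapse to a single sign-definite identity $\mu_1 u_1^2+\beta u_2^2=\omega_1$, whose combination with the vanishing of $u_i$ on $\partial\Omega$ (or at infinity) forces $\omega_1=0$ and hence $u_i\equiv 0$.

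Finally, since $\tilde{\mathcal{U}}$ is a $C^1$ submanifold of $\mathcal{H}$ of codimension $3$ by Lemma~\ref{lemma:U_alpha_manifold}, the Lagrange multiplier theorem applied to $F|_{\tilde{\mathcal{U}}}$ provides $\omega_1,\omega_2,\gamma\in\R$ for which \eqref{eq:system_main} holds in the weak sense. That $\gamma$ (the multiplier associated with the $\mathcal{H}$-norm constraint) is nonzero whenever $\alpha>\lambda_{V_1}\rho_1+\lambda_{V_2}\rho_2$ follows by the same contradiction used in the previous paragraph, whereas the trivial case $\alpha=\lambda_{V_1}\rho_1+\lambda_{V_2}\rho_2$ is covered directly by Remark~\ref{rem:trivial_constraint} with $\gamma=0$ and $\omega_i=-\lambda_{V_i}$.
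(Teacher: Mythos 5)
Your overall architecture coincides with the paper's: direct method for existence, the contradiction argument assuming $\|(u_1,u_2)\|_{\mathcal H}^2<\alpha$ leading to the purely algebraic Lagrange conditions $u_i(\mu_i u_i^2+\beta u_j^2-\omega_i)=0$ a.e., the case split on $\beta^2\neq\mu_1\mu_2$ versus $\beta^2=\mu_1\mu_2$, and finally the Lagrange multiplier theorem on the codimension-$3$ manifold $\tilde{\mathcal U}$ (with the observation that the multiplier of the norm constraint cannot vanish, which is what lets you divide and write the equations in the form \eqref{eq:system_main}). All of that matches the paper.

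The genuine gap is in the degenerate case $\beta^2=\mu_1\mu_2$ (with $\mu_1,\mu_2$ of the same strict sign and $\beta$ of the matching sign, the only sub-case surviving \eqref{eq:assumptions_nondegeneracyIntro}). You assert that the identity $\mu_1u_1^2+\beta u_2^2=\omega_1$, ``combined with the vanishing of $u_i$ on $\partial\Omega$ (or at infinity)'', forces $\omega_1=0$. As written this step does not go through: the identity holds only a.e.\ on $\{u_1>0\}$, an $H^1_0$ function has no pointwise boundary values, and the set $\{u_1>0\}\cup\{u_2>0\}$ could a priori be compactly contained in $\Omega$, in which case vanishing on $\partial\Omega$ says nothing about the identity. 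This is precisely the point the paper flags as ``much more delicate'': it is resolved there by invoking the approximate continuity of $H^1$ functions at $\mathcal H^{N-1}$-a.e.\ point (the reference to Ziemer), which produces a sequence $x_n$ along which the pointwise identities hold while $u_1(x_n),u_2(x_n)\to0$, whence $\omega_1=\omega_2=0$; only then does sign-definiteness (equivalently, integration giving $\mu_i\rho_i+\beta\rho_j=0$) yield the contradiction. Your sketch can be completed---either by that route, or by observing that $v:=\mu_1u_1^2+\beta u_2^2$ belongs to $W^{1,1}$, takes only the two values $0$ and $\omega_1$, hence has vanishing gradient and is constant on the connected set $\Omega$, and a nonzero constant is incompatible with $v$ having zero trace (or being integrable on $\R^N$)---but some such argument must be supplied; the assertion alone is not a proof. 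A smaller, shared imprecision: in the case $\mu_1\mu_2\neq\beta^2$ with one $\mu_i=0$, the one-variable condition on $\{u_i>0,\ u_j=0\}$ reads $\omega_i=0$ and does not by itself make $u_i$ constant there; the paper is equally terse on this point, and the contradiction still follows with a little extra care.
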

\begin{proof}
If $\a = \lambda_{V_1}\rho_1+\lambda_{V_2}\rho_2$ then by Remark \ref{rem:trivial_constraint}
the result immediately follows by choosing $\omega_i=-\lambda_{V_i}$, $\gamma=0$.

Otherwise, it is not difficult to see that $M(\a,\rho_1,\rho_2)$ is achieved by a couple
$(u_1,u_2)\in \mathcal{U}(\a,\rho_1,\rho_2)$. Indeed, $\mathcal{U}(\a,\rho_1,\rho_2)$ is not empty
and weakly compact in $\mathcal{H}$, $F(u_1,u_2)$ is weakly continuous and bounded in
$\mathcal{U}(\a,\rho_1,\rho_2)$:
\[
\left| F(u_1,u_2) \right| \leq C (|\m_1|+|\m_2|+|\b|)\a^2.
\]
By possibly taking $|u_i|$ we can suppose $u_i\geq0$.

Suppose in view of a contradiction that the maximizer does not belong to $\tilde{\mathcal{U}}(\a,\rho_1,\rho_2)$, i.e. $\|(u_1,u_2)\|_{\mathcal{H}}^2<\alpha$ .
 Then there exist two Lagrange multipliers $\omega_1,\omega_2$ such that almost everywhere we have
\begin{equation}\label{eq:contradiction_u_i_const}
\m_1 u_1^3+\b u_1 u_2^2=\omega_1 u_1 \quad\text{and}\quad \b u_1^2 u_2 + \m_2 u_2^3 =\omega_2u_2.
\end{equation}
a) \underline{If $\b^2\neq \m_1\m_2$:} this implies that the $u_i$ are piecewise constant;
since $u_i\in H^1_0(\Omega)$, $u_i\not\equiv0$, we have reached a contradiction. \\
b) \underline{The remaining cases} $\mu_1,\mu_2>0$, $\beta=\sqrt{\mu_1\mu_2}$ and $\mu_1,\mu_2<0$,
$\beta=-\sqrt{\mu_1\mu_2}$ are much more delicate, and we will analyze them in detail during the
remainder of the proof. First of all, we claim that $\omega_1=\omega_2=0$. To start with, suppose
that $\Omega$ is bounded, Consider the extension of $u_i$ to the whole $\R^N$ by 0, denoting it also
by $u_i$. With this notation, $u_i\in H^1(\R^N)$, hence by \cite[Remark 3.3.5]{Ziemer} we have that
each $u_i$ is \emph{approximately continuous}, this meaning that for $\mathcal{H}^{N-1}$--a.e.
$x_0\in \R^N$ there exists a measurable set $A_{x_0}^i$ such that
\begin{equation}\label{eq:auxiliary_appcontinuity}
\lim_{r\to 0}\frac{|A_{x_0}^i \cap B_r(x_0)|}{|B_r(x_0)|}=1, \qquad
u_i|_{A_{x_0}^i} \text{ is continuous at } x_0.
\end{equation}
Observe that clearly $|A_{x_0}^1\cap A_{x_0}^2 \cap B_r(x_0)|/|B_r(x_0)|\to 1$ as well. Thus, as $\Omega$ is Lipschitz (and hence $\mathcal{H}^{N-1}(\partial\Omega)>0$) and $u_i=0$ on $\partial \Omega$, there exist $x_0\in \overline \Omega$ with $u_1(x_0)=u_2(x_0)=0$, $A_{x_0}^i$ satisfying \eqref{eq:auxiliary_appcontinuity}, and $x_n\in A_{x_0}^1\cap A_{x_0}^2\cap \Omega$ converging to $x_0$, such that either $u_1(x_n)\neq 0$ or $u_2(x_n)\neq 0$.
\begin{itemize}
\item If $u_1(x_n),u_2(x_n)\neq 0$, then \eqref{eq:contradiction_u_i_const} implies that
\[
\m_1u_1^2(x_n)+\b u_2^2(x_n)=\omega_1,\qquad \m_2u_2^2(x_n)+\b u_1^2(x_n)=\omega_2,
\]
and thus (by making $n\to \infty$) we have $\omega_1=\omega_2=0$.

\item If $u_1(x_n)\neq 0$ and $u_2(x_n)=0$, then from \eqref{eq:contradiction_u_i_const} we have that $u_1^2(x_n)=\omega_1/\m_1$, and thus $\omega_1=0$, a contradiction. Reasoning in an analogous way, the case $u_1(x_n)=0$ and $u_2(x_n)\neq0$ also leads to a contradiction.
\end{itemize}
Thus we have proved that $\omega_1=\omega_2=0$ in the case $\Omega$ is bounded.
If $\Omega=\R^N$ we can reason in a similar way. By \eqref{eq:V_assumptionT} we have that every $(u_1,u_2)\in\mathcal{H}$ satisfies
\begin{equation}\label{eq:decay_u}
\lim_{R\to\infty} \int_{\R^N\setminus B_R} u_i^2 \,dx=0, \qquad  i=1,2.
\end{equation}
Hence for every $\eps>0$ there exist $x_\eps$ with $0<u_1(x_\eps)^2+u_2(x_\eps)^2\leq\eps$ and
$A^1_{x_\eps},A^2_{x_\eps}$ satisfying \eqref{eq:auxiliary_appcontinuity}. Proceeding as above,
since $\eps$ is arbitrary, we obtain $\omega_1=\omega_2=0$.
Therefore we have proved that \eqref{eq:contradiction_u_i_const} writes as
\begin{equation}\label{eq:contradiction_omega_i_zero}
\m_i u_i^2 + \b u_j^2=0 \quad i,j=1,2, \ j\neq i,
\end{equation}
a.e. in $\Omega$. This, in turn, implies $\m_i\rho_i+\b\rho_j=0$, which provides a contradiction
also in case b).

In conclusion, we have shown that the maximizer $(u_1,u_2)$ belongs to $\tilde{\mathcal{U}}_\a$. By Lemma \ref{lemma:U_alpha_manifold} the Lagrange multipliers theorem applies. Since we have shown in addition that $(u_1,u_2)$ can not satisfy \eqref{eq:contradiction_u_i_const}, we conclude that it satisfies \eqref{eq:system_main}.
\end{proof}

\begin{lemma}\label{lemma:gamma>0}
Given $\a>\lambda_{V_1}\rho_1+\lambda_{V_2}\rho_2$, let $(u_1,u_2)\in \tilde{\mathcal{U}}(\a,\rho_1,\rho_2)$ achieve \eqref{eq:M_alpha}. Then in \eqref{eq:system_main} we have $\gamma>0$.
\end{lemma}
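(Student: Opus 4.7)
The plan is to argue $\gamma>0$ by contradiction, first dispensing with $\gamma=0$ and then with $\gamma<0$.

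If $\gamma=0$, then \eqref{eq:system_main} collapses to $-\Delta u_i+V_iu_i=-\omega_i u_i$. Taking $u_i\geq 0$ with $\int u_i^2=\rho_i>0$, simplicity of the first Dirichlet eigenvalue of $-\Delta+V_i$ forces $u_i=\sqrt{\rho_i}\,\varphi_{V_i}$ and $\omega_i=-\lambda_{V_i}$. But then $\|(u_1,u_2)\|^2_{\mathcal{H}}=\lambda_{V_1}\rho_1+\lambda_{V_2}\rho_2<\alpha$, contradicting $(u_1,u_2)\in\tilde{\mathcal{U}}(\alpha,\rho_1,\rho_2)$.

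To rule out $\gamma<0$, I will exhibit a direction $(h_1,h_2)$ tangent to the mass-constraint manifold $\{\mathcal{Q}(u_i)=\rho_i\}$ along which the $\mathcal{H}$-norm squared strictly decreases: at a maximizer, the first variation of $F$ along such a direction must be nonpositive (because the corresponding curve stays in $\mathcal{U}(\alpha,\rho_1,\rho_2)$ for small positive parameter), whereas testing \eqref{eq:system_main} against $(h_1,h_2)$ will relate the two variations by a factor of $\gamma$, yielding the contradiction. A convenient choice is
\[
h_i:=-u_i+\frac{\rho_i}{c_i}\varphi_{V_i},\qquad c_i:=\int_\Omega u_i\varphi_{V_i}\,dx,
\]
which is legitimate because $c_i\neq0$ by the defining property of $\tilde{\mathcal{U}}$ and may be taken positive after choosing $u_i\geq0$. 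A direct check gives $\int u_ih_i=0$, and applying the eigenvalue identity $-\Delta\varphi_{V_i}+V_i\varphi_{V_i}=\lambda_{V_i}\varphi_{V_i}$ in weak form yields
\[
\sum_i\int_\Omega\bigl(\nabla u_i\cdot\nabla h_i+V_iu_ih_i\bigr)\,dx = -\alpha+\lambda_{V_1}\rho_1+\lambda_{V_2}\rho_2<0
\]
by the standing hypothesis on $\alpha$. On the other hand, multiplying \eqref{eq:system_main} by $h_i$, using $\int u_ih_i=0$, and summing in $i$ produces
\[
\sum_i\int_\Omega\bigl(\nabla u_i\cdot\nabla h_i+V_iu_ih_i\bigr)\,dx = \gamma\,F'(u_1,u_2)[h_1,h_2].
\]

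Combining the two displays forces $F'(u_1,u_2)[h_1,h_2]>0$ whenever $\gamma<0$. I will then invoke the implicit function theorem—applicable since the differentials $(2u_1,0)$ and $(0,2u_2)$ of the mass constraints are linearly independent in $\mathcal{H}$—to extend $(h_1,h_2)$ to a $C^1$ curve on the mass-constraint manifold through $(u_1,u_2)$. Along this curve the $\mathcal{H}$-norm squared strictly decreases to first order, so the curve enters $\mathcal{U}(\alpha,\rho_1,\rho_2)$ for small positive parameter while $F$ strictly increases, contradicting the maximality of $(u_1,u_2)$. I do not foresee a real obstacle: once the test direction $(h_1,h_2)$ is written down, each identity is a routine computation, and the implicit function step is completely standard.
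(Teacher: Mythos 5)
Your proof is correct and is essentially the paper's argument: your test direction $h_i=-u_i+(\rho_i/c_i)\varphi_{V_i}$ is exactly the negative of the tangent vector $w_i'(1)$ that the paper obtains from its explicit curve $w_i(t)=tu_i+s_i(t)\sqrt{\rho_i}\,\varphi_{V_i}$, and the two key identities (the $\mathcal{H}$-norm derivative equal to $\pm(\alpha-\lambda_{V_1}\rho_1-\lambda_{V_2}\rho_2)$, and the relation to $\gamma F'$ via testing \eqref{eq:system_main} against the mass-orthogonal direction) coincide with \eqref{eq:sign_of_gamma1}--\eqref{eq:sign_of_gamma2}. The only cosmetic differences are that you argue by contradiction rather than reading off the sign of $\gamma$ directly, and that your separate treatment of $\gamma=0$ is redundant since the combined identity $\gamma F'(u_1,u_2)[h_1,h_2]=-\alpha+\lambda_{V_1}\rho_1+\lambda_{V_2}\rho_2\neq 0$ already excludes it.
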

\begin{proof}
We proceed similarly to \cite[Prop. 2.4]{NorisTavaresVerzini2013}.
For $i=1,2$ and $t\in\R$ close to $1$, let
\[
w_i(t)=tu_i+s_i(t)\sqrt{\rho_i} \vphi_{V_i},
\]
where $s_i(t)$ are such that
\begin{equation}\label{eq:s_i}
\rho_i=\int_\Om w_i(t)^2\,dx=t^2\rho_i +2t s_i(t)\sqrt{\rho_i} \int_\Om u_i\vphi_{V_i}\,dx+s_i(t)^2\rho_i, \quad s_i(1)=0.
\end{equation}
Since
\[
\left.\partial_{s_i}\left(t^2\rho_i+2ts_i\sqrt{\rho_i}\int_\Om u_i\vphi_{V_i}\,dx+s_i^2\rho_i\right)\right|_{(t,s)=(1,0)}=2\sqrt{\rho_i} \int_\Om u_i\vphi_{V_i}\,dx\neq0,
\]
the Implicit Function Theorem applies, providing that the maps $t\mapsto w_i(t)$ are of class $C^1$ in a neighborhood of $t=1$.
The first relation in \eqref{eq:s_i} provides
\[
0=\int_\Omega u_iw_i'(1)\,dx=\rho_i+s_i'(1)\sqrt{\rho_i}\int_\Omega u_i\varphi_{V_i}\,dx.
\]
Therefore $s_i'(1)=-\sqrt{\rho_i}/\int_\Omega u_i\vphi_{V_i}\,dx$ and
$w_i'(1)=u_i-(\rho_i/\int_\Omega u_i\vphi_{V_i}\,dx)\vphi_{V_i}$.
We use the last estimates to compute
\[
\begin{split}
\left.\frac{1}{2}\frac{d}{dt} \int_\Omega \left( |\nabla w_i(t)|^2+V_i(x)w_i(t)^2\right)\, dx\right|_{t=1}
&= \int_\Om\left(\nabla u_i\cdot\nabla w'_i(1)+V_i(x)u_iw'_i(1)\right)\,dx \\
&= \int_\Omega \left(|\nabla u_i|^2+V_i(x)u_i^2\right)\, dx- \rho_i\lambda_{V_i},
\end{split}
\]
and hence
\begin{equation}\label{eq:sign_of_gamma1}
\left.\frac{1}{2}\frac{d}{dt} \|( w_1(t),w_2(t) )\|^2_{\mathcal{H}}\right|_{t=1}
=\alpha-(\lambda_{V_1}\rho_1+\lambda_{V_2}\rho_2)>0.
\end{equation}
Thus there exists $\eps>0$ such that $(w_1(t),w_2(t))\in\mathcal{U}(\a,\rho_1,\rho_2)$ for $t\in (1-\eps, 1]$.
Since $(w_1(1),w_2(1))=(u_1,u_2)$ achieves the maximum of $F$ in $\mathcal{U}(\a,\rho_1,\rho_2)$, we deduce
\begin{equation}\label{eq:sign_of_gamma2}
\left.\frac{d}{dt} F(w_1(t),w_2(t)) \right|_{t=1}\geq 0.
\end{equation}
On the other hand, using  \eqref{eq:system_main} and the fact that $\int_\Omega u_iw_i'(1)\,dx=0$, we have
\[
\begin{split}
\left.\gamma \frac{d}{dt} F(w_1(t),w_2(t)) \right|_{t=1} \\
= \int_\Omega \left[ (-\Delta u_1+V_1(x)u_1) w_1'(1)+(-\Delta u_2+V_2(x)u_2) w_2'(1) \right]\,dx \\
= \left. \|( w_1(t),w_2(t) )\|^2_{\mathcal{H}} \right|_{t=1}.
\end{split}
\]
By comparing the last relation with \eqref{eq:sign_of_gamma1} and \eqref{eq:sign_of_gamma2} we obtain
the statement.
\end{proof}
We are ready to prove our first main result.
\begin{proof}[Proof of Theorem \ref{thm:intro1}]
By Lemma \ref{lemma:maximizer_solves_sys}, for any
$(u_1,u_2)\in \arg\max M(\alpha,u_1,u_2)$ (which is not empty) there exists
$(\omega_1,\omega_2,\gamma)\in\R^3$, such that \eqref{eq:system_main} holds. Moreover, since by assumption $\a>\lambda_{V_1}\rho_1
+\lambda_{V_2}\rho_2$, Lemma \ref{lemma:gamma>0} implies that $\gamma>0$.
The only thing that remains to prove is that \eqref{eq:Uu} holds. This is a direct
consequence of \eqref{eq:system_main} since, setting $U_i = \sqrt{\gamma} u_i$, we obtain
\begin{multline*}
-\Delta U_i+(V_i(x)+\omega_i) U_i=
\gamma^{1/2}\left(-\Delta u_i+(V_i(x)+\omega_i) u_i\right)\\
=\gamma^{3/2}(\m_i u_i^3+\b u_i u_j^2)
=\m_i U_i^3+\b U_i U_j^2. \qedhere
\end{multline*}
\end{proof}
In the remainder of this section we will prove some properties of $M$ and of system
\eqref{eq:system_main} which we will use later on.
A remarkable property is that $M$ is a continuous function.
\begin{lemma}\label{lem:M_is_continuous}
Let $(\a_n,\rho_{1,n},\rho_{2,n})\to(\bar\a,\bar\rho_{1},\bar\rho_{2})$, with
$\a_n \geq \lambda_{V_1}\rho_{1,n}+\lambda_{V_2}\rho_{2,n}$. Then
\[
M(\a_n,\rho_{1,n},\rho_{2,n})\to M(\bar\a,\bar\rho_{1},\bar\rho_{2}).
\]
\end{lemma}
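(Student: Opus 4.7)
I would prove upper and lower semicontinuity of $M$ separately at the limit point $(\bar\alpha,\bar\rho_1,\bar\rho_2)=\lim(\alpha_n,\rho_{1,n},\rho_{2,n})$.

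For the upper bound, I would pick maximizers $(u_{1,n},u_{2,n})\in\mathcal{U}(\alpha_n,\rho_{1,n},\rho_{2,n})$ provided by Lemma \ref{lemma:maximizer_solves_sys}. Since $\|(u_{1,n},u_{2,n})\|_{\mathcal{H}}^2\leq\alpha_n$ is uniformly bounded, a subsequence converges weakly in $\mathcal{H}$ and, thanks to the compact embedding $\mathcal{H}\hookrightarrow L^p$ recalled in the Introduction, strongly in $L^2$ and $L^4$ to some $(\bar u_1,\bar u_2)$. The mass constraints then pass to the limit by $L^2$ strong convergence, whereas the inequality $\|(\bar u_1,\bar u_2)\|_{\mathcal{H}}^2\leq\bar\alpha$ follows from weak lower semicontinuity of the norm, so $(\bar u_1,\bar u_2)\in\mathcal{U}(\bar\alpha,\bar\rho_1,\bar\rho_2)$. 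Continuity of $F$ on $L^4\times L^4$ then yields $M(\alpha_n,\rho_{1,n},\rho_{2,n})=F(u_{1,n},u_{2,n})\to F(\bar u_1,\bar u_2)\leq M(\bar\alpha,\bar\rho_1,\bar\rho_2)$ along the subsequence; since the same argument applies to any subsequence one deduces $\limsup M(\alpha_n,\rho_{1,n},\rho_{2,n})\leq M(\bar\alpha,\bar\rho_1,\bar\rho_2)$.

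For the lower bound, the goal is to construct a competitor $(v_{1,n},v_{2,n})\in\mathcal{U}(\alpha_n,\rho_{1,n},\rho_{2,n})$ converging strongly in $\mathcal{H}$ to a maximizer $(\bar u_1,\bar u_2)$ of $M(\bar\alpha,\bar\rho_1,\bar\rho_2)$. In the degenerate case $\bar\alpha=\lambda_{V_1}\bar\rho_1+\lambda_{V_2}\bar\rho_2$ this is immediate: by Remark \ref{rem:trivial_constraint} one may take $v_{i,n}=\sqrt{\rho_{i,n}}\,\varphi_{V_i}$, whose $\mathcal{H}$-norm equals $\lambda_{V_1}\rho_{1,n}+\lambda_{V_2}\rho_{2,n}\leq\alpha_n$ by assumption. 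The main obstacle is the opposite case $\bar\alpha>\lambda_{V_1}\bar\rho_1+\lambda_{V_2}\bar\rho_2$: Lemma \ref{lemma:maximizer_solves_sys} forces $\|(\bar u_1,\bar u_2)\|_{\mathcal{H}}^2=\bar\alpha$, so the naive rescaling $\sqrt{\rho_{i,n}/\bar\rho_i}\,\bar u_i$ (which would fix the new masses) may violate the constraint $\|\cdot\|_{\mathcal{H}}^2\leq\alpha_n$ whenever $\alpha_n$ approaches $\bar\alpha$ from below.

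To bypass the obstacle I would reuse the one-parameter perturbation built in Lemma \ref{lemma:gamma>0}: the curve $w_i(t)=t\bar u_i+s_i(t)\sqrt{\bar\rho_i}\,\varphi_{V_i}$ keeps each mass $\int_\Omega w_i(t)^2\,dx$ equal to $\bar\rho_i$ and, by \eqref{eq:sign_of_gamma1}, satisfies $\tfrac{d}{dt}\|(w_1(t),w_2(t))\|_{\mathcal{H}}^2\bigr|_{t=1}=2(\bar\alpha-\lambda_{V_1}\bar\rho_1-\lambda_{V_2}\bar\rho_2)>0$. Therefore the norm drops linearly below $\bar\alpha$ for $t<1$ close to $1$, creating a reservoir of slack to be balanced against both $\alpha_n-\bar\alpha$ and the norm error produced by the final mass-adjusting rescaling $v_{i,n}=\sqrt{\rho_{i,n}/\bar\rho_i}\,w_i(t_n)$. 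Choosing $1-t_n$ proportional to $\max(|\alpha_n-\bar\alpha|,|\rho_{1,n}-\bar\rho_1|,|\rho_{2,n}-\bar\rho_2|)$ makes $\|(v_{1,n},v_{2,n})\|_{\mathcal{H}}^2\leq\alpha_n$ for $n$ large and $v_{i,n}\to\bar u_i$ strongly in $\mathcal{H}$. Strong $L^4$ convergence then gives $M(\alpha_n,\rho_{1,n},\rho_{2,n})\geq F(v_{1,n},v_{2,n})\to F(\bar u_1,\bar u_2)=M(\bar\alpha,\bar\rho_1,\bar\rho_2)$, completing the proof.
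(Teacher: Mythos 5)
Your proof is correct. Part (a), the upper semicontinuity, is exactly the paper's argument. For the lower semicontinuity the paper and you both build a recovery sequence in $\mathcal{U}(\a_n,\rho_{1,n},\rho_{2,n})$ converging strongly in $\Hcal$ to a maximizer of $M(\bar\a,\bar\rho_1,\bar\rho_2)$, but the constructions differ. The paper perturbs the limit maximizer by a three-parameter affine family $w_1(a,b)=(1+a)\bar u_1+b\vphi_{V_1}$, $w_2(c)=(1+c)\bar u_2$ and applies the Inverse Function Theorem to the constraint map $f(a,b,c)$, checking that $\det f'(0,0,0)\neq 0$ thanks to the (WLOG) assumption $\int_\Omega(|\nabla\bar u_1|^2+V_1\bar u_1^2)\,dx>\lambda_{V_1}\bar\rho_1$; this lands the competitors exactly on the equality constraints, i.e.\ in $\tilde{\mathcal{U}}(\a_n,\rho_{1,n},\rho_{2,n})$. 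You instead recycle the mass-preserving curve $w_i(t)$ of Lemma \ref{lemma:gamma>0}, whose $\Hcal$-norm decreases linearly below $\bar\a$ for $t<1$ by \eqref{eq:sign_of_gamma1}, and then adjust the masses by the scalar factors $\sqrt{\rho_{i,n}/\bar\rho_i}$, choosing $1-t_n\sim\max(|\a_n-\bar\a|,|\rho_{i,n}-\bar\rho_i|)$ so that the linear slack in the norm dominates the $O(\eps_n)$ error introduced by the rescaling; this only places the competitors in $\mathcal{U}$ (inequality constraint), which suffices since $M$ is a supremum over $\mathcal{U}$. Your route avoids the determinant computation and the WLOG reduction at the cost of some explicit error bookkeeping (and it uses that the nonnegative maximizer satisfies $\int_\Omega\bar u_i\vphi_{V_i}\,dx>0$ so that Lemma \ref{lemma:gamma>0}'s implicit-function construction applies to both components); both arguments are valid, and the splitting into the degenerate case $\bar\a=\lambda_{V_1}\bar\rho_1+\lambda_{V_2}\bar\rho_2$ handled via Remark \ref{rem:trivial_constraint} is common to both.
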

\begin{proof}
a) \underline{$\limsup M(\a_n,\rho_{1,n},\rho_{2,n})\leq M(\bar\a,\bar\rho_{1},\bar\rho_{2})$.}
Indeed, let $(u_{1,n},u_{2,n})\in\tilde{\mathcal{U}}(\a_n,\rho_{1,n},\rho_{2,n})$ achieve
$M(\a_n,\rho_{1,n},\rho_{2,n})$. Since $\alpha_n$ is bounded, we deduce that
$(u_{1,n},u_{2,n})$ converges (up to subsequences) weakly in $\Hcal$ to some $(u_1^*,u_2^*)$. By the compact embedding, $(u_1^*,u_2^*) \in
\mathcal{U}(\bar\a,\bar\rho_{1},\bar\rho_{2})$ and
\[
M(\a_n,\rho_{1,n},\rho_{2,n}) \to F(u_1^*,u_2^*)\leq M(\bar\a,\bar\rho_{1},\bar\rho_{2}).
\]
b) \underline{$\liminf M(\a_n,\rho_{1,n},\rho_{2,n})\geq M(\bar\a,\bar\rho_{1},\bar\rho_{2})$.}
We assume $\bar\a > \lambda_{V_1}\bar\rho_1+\lambda_{V_2}\bar\rho_2$, the complementary case
being an easy consequence of Remark \ref{rem:trivial_constraint}. Let $(\bar u_{1},\bar u_{2})\in
\tilde{\mathcal{U}}(\bar\a,\bar\rho_{1},\bar\rho_{2})$, with
non negative components, achieve $M(\bar\a,\bar\rho_{1},\bar\rho_{2})$. To conclude, we will
construct a sequence $(w_{1,n},w_{2,n})\in
\tilde{\mathcal{U}}(\a_n,\rho_{1,n},\rho_{2,n})$ in such a way that $(w_{1,n},w_{2,n})\to (\bar u_{1},
\bar u_{2})$, strongly in $\Hcal$. Indeed, this would imply
\[
M(\a_n,\rho_{1,n},\rho_{2,n})\geq F(w_{1,n},w_{2,n}) \to
F(\bar{u}_1,\bar{u}_2)= M(\bar\a,\bar\rho_{1},\bar\rho_{2}).
\]
Since $\bar\a > \lambda_{V_1}\bar\rho_1+\lambda_{V_2}\bar\rho_2$,
we can assume without loss of generality that
\begin{equation}\label{eq:continuity}
\int_\Omega (|\nabla \bar u_1 |^2+V_1(x)\bar u_1^2)\,dx > \lambda_{V_1} \bar \rho_1.
\end{equation}
Taking
\[
w_1(a,b) = (1+a)\bar{u}_1 + b \vphi_{V_1},\qquad w_2 (c) = (1+c)\bar{u}_2
\]
our task is reduced to apply the Inverse Function Theorem to the map
\[
f(a,b,c) = \left(\|(w_1(a,b),w_2 (c))\|^2_{\Hcal},\|w_1(a,b)\|^2_{L^2},\|w_2 (c)\|^2_{L^2}\right)
\]
near $f(0,0,0)=(\bar\a,\bar\rho_{1},\bar\rho_{2})$. A direct calculation yields
%\[
%f'(a,b,c) = 2
%\left(
%\begin{array}{ccc}
%\int \nabla w_1 \nabla \cdot\partial_a w_1 & \int \nabla w_1 \cdot\nabla \partial_b w_1 &
%        \int \nabla w_2 \cdot\nabla \partial_c w_2 \\
%\int w_1 \partial_a w_1 & \int w_1 \partial_b w_1 & 0 \\
%0 & 0 & \int w_2 \partial_c w_2\end{array}\right),
%\]
%so that \eqref{eq:continuity} yields
\begin{multline*}
\det f'(0,0,0) = 8\int_\Omega \bar u_2^2\,dx \left[
\int_\Omega (|\nabla\bar u_1|^2 + V_1(x)\bar u_1^2)\,dx \cdot
\int_\Omega \bar u_1\vphi_{V_1}\,dx\right.\\ -\left.
\int_\Omega (\nabla \bar u_1\cdot \nabla \vphi_{V_1} + V_1(x) \bar u_1\vphi_{V_1})\,dx \cdot
\int_\Omega \bar u_1^2 \,dx
\right]\\
= 8\bar\rho_2 \int_\Omega \bar u_1\vphi_{V_1}\,dx \left[
\int_\Omega (|\nabla\bar u_1|^2 + V_1(x)\bar u_1^2)\,dx - \lambda_{V_1} \bar \rho_1
\right],
\end{multline*}
which is positive by \eqref{eq:continuity}.
\end{proof}
\begin{corollary}\label{coro:strong_conv}
Let $(u_{1,n},u_{2,n})\in\tilde{\mathcal{U}}(\a_n,\rho_{1,n},\rho_{2,n})$ achieve
$M(\a_n,\rho_{1,n},\rho_{2,n})$. If $(\a_n,\rho_{1,n},\rho_{2,n})\to(\bar\a,\bar\rho_{1},
\bar\rho_{2})$ then, up to subsequences,
\[
(u_{1,n},u_{2,n}) \to (\bar u_1, \bar u_2) \qquad\text{achieving }M(\bar\a,\bar\rho_{1},
\bar\rho_{2}),
\]
the convergence being strong in $\Hcal$. Indeed, once weak convergence to a maximizer has
been obtained, then Lemma \ref{lemma:maximizer_solves_sys} implies that
$\|(\bar u_1, \bar u_2)\|^2_{\Hcal} = \bar \alpha = \lim_n \alpha_n =
\lim_n\|(u_{1,n},u_{2,n})\|^2_{\Hcal}$.
\end{corollary}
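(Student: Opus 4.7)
The strategy is exactly the one sketched in the statement itself: show weak convergence to a maximizer, then upgrade to strong convergence via norm convergence.

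First I would observe that since $\alpha_n$ converges, the sequence $(u_{1,n},u_{2,n})$ is bounded in $\Hcal$, hence admits a subsequence (still denoted the same) weakly converging to some $(\bar u_1,\bar u_2)\in\Hcal$. By the compact embedding $\Hcal\hookrightarrow L^p$ for $p=2,4$, the convergence is strong in both $L^2$ and $L^4$, so passing to the limit in the mass constraints gives $\Qcal(\bar u_i)=\bar\rho_i$, while weak lower semicontinuity of the $\Hcal$-norm yields $\|(\bar u_1,\bar u_2)\|_\Hcal^2\leq \liminf_n \alpha_n = \bar\alpha$. In particular $(\bar u_1,\bar u_2)\in\Ucal(\bar\alpha,\bar\rho_1,\bar\rho_2)$.

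Next, strong $L^4$-convergence ensures $F(u_{1,n},u_{2,n})\to F(\bar u_1,\bar u_2)$, while Lemma \ref{lem:M_is_continuous} gives $F(u_{1,n},u_{2,n}) = M(\alpha_n,\rho_{1,n},\rho_{2,n}) \to M(\bar\alpha,\bar\rho_1,\bar\rho_2)$. Therefore $F(\bar u_1,\bar u_2) = M(\bar\alpha,\bar\rho_1,\bar\rho_2)$, i.e.\ $(\bar u_1,\bar u_2)$ achieves the maximum on $\Ucal(\bar\alpha,\bar\rho_1,\bar\rho_2)$.

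Now I invoke Lemma \ref{lemma:maximizer_solves_sys}: every maximizer belongs to $\tilde\Ucal(\bar\alpha,\bar\rho_1,\bar\rho_2)$, which forces $\|(\bar u_1,\bar u_2)\|_\Hcal^2=\bar\alpha$. Combining with $\|(u_{1,n},u_{2,n})\|_\Hcal^2=\alpha_n\to\bar\alpha$, we conclude $\|(u_{1,n},u_{2,n})\|_\Hcal\to \|(\bar u_1,\bar u_2)\|_\Hcal$. Since $\Hcal$ is a Hilbert space, weak convergence together with convergence of norms upgrades to strong convergence, which is the desired claim. The only step with any subtlety is guaranteeing $\|(\bar u_1,\bar u_2)\|_\Hcal^2=\bar\alpha$ rather than a strict inequality, but this is precisely what Lemma \ref{lemma:maximizer_solves_sys} provides (noting that if $\bar\alpha=\lambda_{V_1}\bar\rho_1+\lambda_{V_2}\bar\rho_2$, the equality also follows from the explicit form of $\Ucal$ described in Remark \ref{rem:trivial_constraint}).
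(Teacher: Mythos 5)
Your proposal is correct and follows essentially the same route as the paper: extract a weak limit, use the compact embedding and the continuity of $M$ (Lemma \ref{lem:M_is_continuous}) to identify the limit as a maximizer, then invoke Lemma \ref{lemma:maximizer_solves_sys} to get $\|(\bar u_1,\bar u_2)\|_{\Hcal}^2=\bar\alpha=\lim_n\alpha_n$, and conclude strong convergence from weak convergence plus norm convergence in a Hilbert space. Your extra remark on the borderline case $\bar\alpha=\lambda_{V_1}\bar\rho_1+\lambda_{V_2}\bar\rho_2$ is a careful touch consistent with Remark \ref{rem:trivial_constraint}.
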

As one may suspect, the convergence of the maxima and that of the maximizers implies the
one of the Lagrange multipliers appearing in \eqref{eq:system_main}. As a matter
of fact, this holds even in more general situations, as we show in the following lemma.
\begin{lemma}\label{lemma:bounded_quantities}
Take a sequence $(u_{1,n},u_{2,n},\omega_{1,n},\omega_{2,n},\gamma_n)$ such that
\begin{equation*}
\begin{cases}
-\Delta u_{1,n}+(V_1(x)+\omega_{1,n}) u_{1,n}=\gamma_n(\mu_1u_{1,n}^3+\beta u_{1,n}u_{2,n}^2)\\
-\Delta u_{2,n}+(V_2(x)+\omega_{2,n}) u_{2,n}=\gamma_n(\mu_2u_{2,n}^3+\beta u_{2,n}u_{1,n}^2)\\[0.2cm]
\int_\Omega u_{1,n}^2\, dx=\rho_{1,n},\quad \int_\Omega u_{2,n}\, dx=\rho_{2,n},
\end{cases}
\end{equation*}
and assume that
\[
\rho_{1,n},\ \rho_{2,n}\ \text{ and } \ \|(u_{1,n},u_{2,n})\|_\Hcal^2=:\alpha_n \qquad \text{ are bounded}
\]
both from above, and from below, away from zero.
Then the sequences $\omega_{1,n}$, $\omega_{2,n}$, $\gamma_n$ are bounded.
%Moreover, $\gamma_n$ is bounded except in the case $\sqrt{|m_1|}\rho_1= \sqrt{|m_2|}\rho_2$
%and either $m_1,m_2<0$, $b=\sqrt{m_1 m_2}$, or $m_1,m_2>0$, $b=-\sqrt{m_1m_2}$.
\end{lemma}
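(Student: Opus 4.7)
The plan is by contradiction: assume $|\gamma_n|\to \infty$ along a subsequence. Since $\alpha_n$ is bounded, $(u_{1,n},u_{2,n})$ is bounded in $\Hcal$, so (up to a further subsequence) it converges weakly in $\Hcal$ to some $(u_1^*,u_2^*)$ and strongly in $L^p$ for every $p<2^*$, in particular in $L^2\cap L^4$. Because $\int_\Omega(u_i^*)^2\,dx=\lim \rho_{i,n}$ is bounded away from $0$, both components $u_i^*$ are nontrivial.

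I would then split according to whether the ratios $\omega_{i,n}/\gamma_n$ stay bounded. If $|\omega_{i,n}/\gamma_n|\to\infty$ for some $i$ (say $i=1$), divide the first equation by $\omega_{1,n}$ and test against an arbitrary $v\in C_c^\infty(\Omega)$: the $\Delta$ and $V_1$ terms vanish since $1/\omega_{1,n}\to 0$, and the right-hand side vanishes because $\gamma_n/\omega_{1,n}\to 0$ while $u_{1,n}^3$ and $u_{1,n}u_{2,n}^2$ are bounded in $L^{4/3}$; this forces $\int_\Omega u_1^*\,v\,dx=0$ for every $v$, whence $u_1^*\equiv 0$, a contradiction. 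Otherwise both $\omega_{i,n}/\gamma_n$ are bounded and, up to another subsequence, converge to some $\sigma_i\in\R$. Dividing the $i$-th equation by $\gamma_n$ and passing to the limit in the same way yields
\[
\mu_i(u_i^*)^3+\beta u_i^*(u_j^*)^2=\sigma_i u_i^*\quad \text{a.e. in }\Omega,\qquad i=1,2,\ j\neq i,
\]
which is exactly relation \eqref{eq:contradiction_u_i_const}. At this point I would invoke verbatim the argument from the proof of Lemma \ref{lemma:maximizer_solves_sys}: under \eqref{eq:assumptions_nondegeneracyIntro}, either directly when $\beta^2\neq\mu_1\mu_2$, or via the approximate-continuity technique when $\beta^2=\mu_1\mu_2$, this identity is incompatible with both $u_i^*$ being nontrivial.

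Once $\gamma_n$ is known to be bounded, testing equation $i$ against $u_{i,n}$ itself gives
\[
\omega_{i,n}\rho_{i,n}=\gamma_n\int_\Omega(\mu_i u_{i,n}^4+\beta u_{i,n}^2 u_{j,n}^2)\,dx-\int_\Omega(|\nabla u_{i,n}|^2+V_i u_{i,n}^2)\,dx,
\]
whose right-hand side is bounded since the kinetic term is $\leq \alpha_n$ and the $L^4$ norms are controlled by Sobolev embedding; dividing by $\rho_{i,n}$, which stays away from zero, yields the bound on $\omega_{i,n}$. The most delicate point is the borderline case $\beta^2=\mu_1\mu_2$ within the second branch, which is precisely where assumption \eqref{eq:assumptions_nondegeneracyIntro} is genuinely used; this is however exactly the step already settled by the approximate-continuity argument of Lemma \ref{lemma:maximizer_solves_sys}, which I would simply quote rather than repeat.
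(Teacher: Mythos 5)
Your proof is correct, but it runs the two bounds in the opposite order from the paper, and this changes which tools are needed. The paper first shows that the $\omega_{i,n}$ are bounded: assuming $|\omega_{1,n}|\to\infty$, testing the first equation against $u_{1,n}$ and dividing by $\omega_{1,n}$ forces $\gamma_n/\omega_{1,n}\to A\neq 0$ (because $\rho_{1,n}$ stays away from zero), and then dividing the weak formulation by $\omega_{1,n}$ yields the pointwise identity $1=A(\mu_1 u_1^2+\beta u_2^2)$, which is incompatible with the vanishing of the $u_i$ on $\partial\Omega$ (or with their decay, via \eqref{eq:decay_u}). With the $\omega_{i,n}$ already bounded, the subsequent hypothesis $\gamma_n\to+\infty$ makes the limits $\sigma_i=\lim\omega_{i,n}/\gamma_n$ automatically zero, so the paper lands directly on $\mu_i u_i^2+\beta u_j^2=0$ and concludes by integration against \eqref{eq:assumptions_nondegeneracyIntro}; no approximate-continuity argument is needed in this lemma. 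In your version the $\sigma_i$ need not vanish a priori, so you must re-import the full dichotomy of Lemma \ref{lemma:maximizer_solves_sys}, including the delicate approximate-continuity step in the degenerate case $\beta^2=\mu_1\mu_2$; this quotation is legitimate, since that argument only uses that $u_i^*\in H^1_0$, $u_i^*\not\equiv 0$ and relation \eqref{eq:contradiction_u_i_const} hold, not the maximizing property. What you gain is that your concluding bound on $\omega_{i,n}$ --- test equation $i$ against $u_{i,n}$, use the boundedness of $\alpha_n$ and of the $L^4$ norms, and divide by $\rho_{i,n}\geq c>0$ --- is entirely elementary and arguably cleaner than the paper's step a). Both routes are valid; yours trades a shorter $\omega$-step for a heavier $\gamma$-step.
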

\begin{proof}
Take $u_i$ such that $u_{i,n}\rightharpoonup u_i$ weakly in $\Hcal$, strongly in $L^p(\Omega)$,
$1<p<2^*$, and let $\int_\Omega u_i^2\,dx =: \rho_i$.

\noindent a) \underline{$\omega_{i,n}$ are bounded.} Suppose, in view of a contradiction, that $|\omega_{1,n}|\to \infty$. By multiplying the equation for $u_{1,n}$ by $u_{1,n}$ itself and dividing the result by $\omega_{1,n}$, we obtain
\[
\frac{1}{\omega_{1,n}}\int_\Omega (|\nabla u_{1,n}|^2+V_1(x)u_{1,n}^2)\, dx+\rho_{1,n}
=\frac{\gamma_n}{\omega_{1,n}}\int_\Omega (\m_1u_{1,n}^4+\b u_{1,n}^2 u_{2,n}^2)\, dx.
\]
As $\alpha_n$ is bounded, by taking the limit in $n$, it holds
\[
\rho_1=A\int_\Omega (\m_1u_1^4+\b u_1^2u_2^2)\, dx
\]
where $\lim_n \frac{\gamma_n}{\omega_{1,n}}=:A\neq 0$ (which also implies that $\gamma_n\to +\infty$). Going back to the first equation, multiplying it by an arbitrary test function $\phi$, dividing the result by $\omega_{1,n}$, and passing to the limit, we see that
\[
\int_\Omega u_1 \phi\, dx=A \int_\Omega (\m_1u_1^3 +\b u_1u_2^2 )\phi\, dx,
\]
and hence, since $u_1>0$ in $\Omega$ (by the maximum principle) we have the pointwise identity
\[
1=A(\m_1 u_1^2+\b u_2^2).
\]
As the trace of $u_1$ and $u_2$ is zero on $\partial \Omega$, we obtain a contradiction and thus
$\omega_{1,n}$ is a bounded sequence. The case $\omega_{2,n}$ unbounded can be ruled out in an
analogous way.
\smallbreak

\noindent b) \underline{$\gamma_n$ is bounded.}
Assume by contradiction that $\gamma_n\to +\infty$. Multiplying the $i$--th equation by any test function $\phi$, integrating by parts, dividing the result by $\gamma_n$ and passing to the limit, at the end we deduce that
\[
\m_1u_1^2+\b u_2^2=0,\qquad \m_2 u_2^2+\b u_1^2=0.
\]
Furthermore, the integration of these two equations yields the identities
\[
\m_1\rho_1+\b\rho_2=0,\qquad \m_2\rho_2+\b\rho_1=0.
\]
This clearly is a contradiction if $(\mu_1,\mu_2,\beta)$ satisfies
\eqref{eq:assumptions_nondegeneracyIntro}.
\end{proof}
To conclude this section, we give some hint of the kind of problems which
arise in case assumption \eqref{eq:assumptions_nondegeneracyIntro} does not hold.
\begin{remark}
When \eqref{eq:assumptions_nondegeneracyIntro} does not hold there are specific conditions
about $\rho_1$, $\rho_2$ which allow to develop the above theory in some cases. On
the other hand, in general, degenerate situations may appear.

For instance, if $\m_1,\m_2<0$ and $\b=\sqrt{\m_1\m_2}$, then
\[
F(u_1,u_2) = -\frac{|\m_1|}{4}\int_\Omega\left( u_1^2-\frac{\sqrt{|\m_2|}}{\sqrt{|\m_1|}}u_2^2
\right)^2\,dx  \leq 0;
\]
if furthermore $\sqrt{|\m_1|}\rho_1 = \sqrt{|\m_2|}\rho_2$, then
\[
F(\rho_1 \psi , \rho_2 \psi) = 0\qquad \text{for every }\psi.
\]
Choosing $\psi$ as the eigenfunction achieving
\[
\hat\alpha=\inf \left\{ \int_\Omega \left(|\nabla\psi|^2+\frac{\rho_1V_1(x) + \rho_2 V_2(x)
}{\rho_1+\rho_2} \psi^2\right)\,dx:\ \int_\Omega \psi^2\,dx=1 \right\},
\]
then $M_\alpha=0$ is attained by $(\sqrt{\rho_1} \psi , \sqrt{\rho_2} \psi)$ for every
$\alpha\geq\hat\alpha(\rho_1+\rho_2)$,
but it belongs to $\tilde{\mathcal{U}}_\a$ only for $\a=\hat\a$. Moreover, if $V_1=V_2=V$, then
$\psi=\vphi_{V}$, and
$(\sqrt{\rho_1}\varphi_{V},\sqrt{\rho_2}\varphi_V,-\lambda_{V},-\lambda_{V},\gamma)$ is a solution of
\eqref{eq:system_main} for \emph{every} $\gamma>0$.
\end{remark}

\section{A general stability result}\label{sec:stab}

Let us fix $(\alpha^*, \rho_1^*,\rho_2^*)$ such that Theorem \ref{thm:intro1} holds.
In this section we will show that, if for $\alpha$ near $\a^*$ the maximum points corresponding
to $M(\alpha, \rho_1^*,\rho_2^*)$ are along a smooth curve, with the multiplier $\gamma$ increasing
with respect to $\alpha$, then the corresponding solitary waves are conditionally orbitally
stable for an associated Schr\"odinger system. As a byproduct, we will obtain the proof of Theorem
\ref{thm:intro2}.

To be precise, let us consider the following conditions:
\begin{enumerate}
\item[(M1)]\label{item:M1} $M(\alpha^*,\rho_1^*,\rho_2^*)$ is achieved by a unique positive pair
$(u_1^*,u_2^*)$.
\item[(M2)]\label{item:M2} There exists an interval $(\alpha_1,\alpha_2)$ containing $\alpha^*$
and a $C^1$ curve
\[
(\alpha_1,\alpha_2)\to \Hcal\times \R^3,\qquad \alpha\mapsto (u_1(\alpha),u_2(\alpha),\omega_1(\alpha),\omega_2(\alpha),\gamma(\alpha))
\]
such that $(u_1(\alpha^*),u_2(\alpha^*))=(u_1^*,u_2^*)$ and
\[
\begin{cases}
(u_1(\alpha),u_2(\alpha))\text{ achieves } M(\alpha,\rho_1^*,\rho_2^*),\\
-\Delta u_i+(V_i(x)+\omega_i)u_i=\gamma(\mu_i u_i^3+\beta u_i u_j^2)\qquad i=1,2,\ j\neq i,
\end{cases}
\]
for every $\alpha\in (\alpha_1,\alpha_2)$ (recall Lemma \ref{lemma:maximizer_solves_sys}).
\item[(M3)] \label{item:M3}The map
\[
(\alpha_1,\alpha_2)\to \R,\qquad \alpha \mapsto \gamma(\alpha)
\]
is strictly increasing.
\end{enumerate}
For easier notation, let us write $\omega_i^* = \omega_i(\a^*)$, $\gamma^*=\gamma(\a^*)$.
Take the NLS system:
\begin{equation}\label{eq:NLS_with_gamma}
\begin{cases}
\icomp \partial_t \Psi_i+\Delta \Psi_i-V_i(x) \Psi_i + \gamma^* ( \mu_i |\Psi_i|^2 +\beta
|\Psi_j|^2)\Psi_i=0 ,\\
\Psi_i(0)=\psi_i,\ i=1,2,\qquad (\psi_1,\psi_2)\in \HcalC.
\end{cases}
\end{equation}
Associated to this system, we have the energy
\[
\Ecal_{\gamma^*}(\Psi_1,\Psi_2)=\frac{1}{2}\|(\Psi_1,\Psi_2)\|_\Hcal^2-\gamma^* F(\Psi_1,\Psi_2)
\]
and the masses $\Qcal(\Psi_i)=\int_{\Omega}|\Psi_i|^2\,dx$, $i=1,2$.
For \eqref{eq:NLS_with_gamma}, we assume the following local well posedness property.
\begin{enumerate}
\item[(LWP)]\label{item:H} We have local existence for \eqref{eq:NLS_with_gamma},
locally in time and uniformly in $\|(\psi_1,\psi_2)\|_{\Hcal}$. Moreover, the energy and
the masses are conserved along trajectories, that is
\[
\Ecal_{\gamma^*}(\Psi_1(t),\Psi_2(t))=\Ecal_{\gamma^*}(\psi_1,\psi_2)\quad \text{ and }\quad
\Qcal(\Psi_i(t))=\Qcal(\psi_i) \ \text{ for } i=1,2
\]
for every existence time.
\end{enumerate}

Let us recall the notion of orbital stability for the NLS system.
\begin{definition}
A standing wave solution $(e^{\icomp t \omega_1}u_1,e^{\icomp t \omega_2}u_2)$ is called
\emph{orbitally stable} for \eqref{eq:NLS_with_gamma} if for each $\eps>0$ there
exists $\delta>0$ such that,
whenever $(\psi_1,\psi_2)\in \HcalC$ satisfies $\|(\psi_1,\psi_2)-(u_1,u_2)\|_{\Hcal}<
\delta$ and $(\Psi_1(t,x), \Psi_2(t,x))$ solves \eqref{eq:NLS_with_gamma} in some
interval $[0,T_0)$, then
\begin{equation}\label{eq:condition_stability1}
(\Psi_1(t),\Psi_2(t))\text{ can be continued to a solution in } 0\leq t<\infty,
\end{equation}
and
\begin{equation}\label{eq:condition_stability2}
\sup_{0\leq t<\infty} \inf_{s_1,s_2\in \R} \| (\Psi_1(t),\Psi_2(t))-(e^{\icomp s_1}u_1,
e^{\icomp s_2}u_2)\|_{\Hcal}<\eps.
\end{equation}
\end{definition}
The purpose and main result of this section is to prove the following stability criterion.
\begin{theorem}\label{thm:stability_general}
Let $\mu_1,\mu_2,\b$ satisfy assumption \eqref{eq:assumptions_nondegeneracyIntro} and
$V_1,V_2$ satisfy assumption \eqref{eq:V_assumptionT}. Under condition (LWP), take $(\alpha^*,\rho_1^*,\rho_2^*)$ for which (M1)--(M3) hold. Then
\[
(e^{\icomp t \omega_1^*}u_1^*,e^{\icomp t \omega_2^*}u_2^*)\quad \text{ is orbitally stable for }
\eqref{eq:NLS_with_gamma}.
\]
\end{theorem}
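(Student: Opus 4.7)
The plan is to follow a Shatah-type argument \cite{Shatah1983}: show that the orbit $\{(e^{\icomp s_1}u_1^*,e^{\icomp s_2}u_2^*):s_1,s_2\in\R\}$ is a strict local minimum of $\Ecal_{\gamma^*}$ on the mass manifold $\{\Qcal(v_i)=\rho_i^*\}$, and then use conservation of energy and masses from (LWP) to trap the flow near this orbit. Everything reduces to analyzing the scalar function
\[
\Phi(\a):=\tfrac{1}{2}\a-\gamma^* M(\a,\rho_1^*,\rho_2^*),
\]
continuous on $[\lambda_{V_1}\rho_1^*+\lambda_{V_2}\rho_2^*,+\infty)$ by Lemma \ref{lem:M_is_continuous}, and $C^1$ on $(\a_1,\a_2)$ by (M2), where it coincides with $\a\mapsto\Ecal_{\gamma^*}(u_1(\a),u_2(\a))$. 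Differentiating along the curve, using the PDE in (M2), the constant mass $\Qcal(u_i(\a))\equiv\rho_i^*$, and $\|u(\a)\|_\Hcal^2=\a$, a direct computation gives $\Phi'(\a)=\tfrac{1}{2}(1-\gamma^*/\gamma(\a))$, so $\Phi'(\a^*)=0$ and by (M3) $\a^*$ is a strict local minimum of $\Phi$; fix $\eps_1,\delta_1>0$ with $\Phi(\a^*\pm\eps_1)\geq\Phi(\a^*)+\delta_1$. The fundamental lower bound follows: for every $v=(v_1,v_2)\in\HcalC$ with $\Qcal(v_i)=\rho_i$ and $\|v\|_\Hcal^2\geq\lambda_{V_1}\rho_1+\lambda_{V_2}\rho_2$, the diamagnetic inequality $\bigl|\nabla|v_i|\bigr|\leq|\nabla v_i|$ a.e.\ places $(|v_1|,|v_2|)$ in $\Ucal(\|v\|_\Hcal^2,\rho_1,\rho_2)$, and since $F$ depends only on the moduli,
\[
\Ecal_{\gamma^*}(v)\geq\tfrac{1}{2}\|v\|_\Hcal^2-\gamma^* M(\|v\|_\Hcal^2,\rho_1,\rho_2).
\]

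Assume stability fails: there exist $\eps_0>0$ and initial data $\psi_n\to(u_1^*,u_2^*)$ in $\HcalC$ such that the corresponding solution $\Psi_n$ either cannot be continued to $[0,\infty)$, or violates \eqref{eq:condition_stability2}. Set $\rho_{i,n}:=\Qcal(\psi_{i,n})\to\rho_i^*$. For $n$ large enough, $\bigl|\|\psi_n\|_\Hcal^2-\a^*\bigr|<\eps_1/4$, $\bigl|\Ecal_{\gamma^*}(\psi_n)-\Phi(\a^*)\bigr|<\delta_1/4$, and, by continuity of $M$ in the mass arguments, $\tfrac{1}{2}(\a^*\pm\eps_1)-\gamma^* M(\a^*\pm\eps_1,\rho_{1,n},\rho_{2,n})>\Phi(\a^*)+\delta_1/2$. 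If $\|\Psi_n(t_0)\|_\Hcal^2=\a^*\pm\eps_1$ at some existence time $t_0$, the fundamental bound combined with mass and energy conservation would yield $\Ecal_{\gamma^*}(\psi_n)=\Ecal_{\gamma^*}(\Psi_n(t_0))>\Phi(\a^*)+\delta_1/2$, contradicting the initial smallness. Continuity of $t\mapsto\Psi_n(t)$ in $\HcalC$ (part of (LWP)) then confines $\|\Psi_n(t)\|_\Hcal^2$ to $(\a^*-\eps_1,\a^*+\eps_1)$ on the existence interval, and uniform local existence extends $\Psi_n$ to $[0,\infty)$, settling \eqref{eq:condition_stability1}. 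Choose the first $t_n$ with $\inf_{s_1,s_2}\|\Psi_n(t_n)-(e^{\icomp s_1}u_1^*,e^{\icomp s_2}u_2^*)\|_\Hcal=\eps_0$, set $\a_n:=\|\Psi_n(t_n)\|_\Hcal^2$, and pass to a subsequence with $\a_n\to\a_\infty\in[\a^*-\eps_1,\a^*+\eps_1]$; the fundamental bound plus conservation and the continuity of $M$ force $\Phi(\a_\infty)\leq\Phi(\a^*)$, hence $\a_\infty=\a^*$ by the strict minimum, and $F(\Psi_n(t_n))\to M(\a^*,\rho_1^*,\rho_2^*)$.

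Extract a weak limit $\Psi_n(t_n)\rightharpoonup\Psi_\infty$ in $\HcalC$; the compact embedding provides strong $L^p$ and, along a further subsequence, a.e.\ convergence. Then $(|\Psi_{1,n}(t_n)|,|\Psi_{2,n}(t_n)|)$ is an almost-maximizing sequence for $M(\a^*,\rho_1^*,\rho_2^*)$, and by the argument of Corollary \ref{coro:strong_conv} combined with uniqueness (M1), it converges strongly in $\Hcal$ to $(u_1^*,u_2^*)$, so $|\Psi_{i,\infty}|=u_i^*$. Componentwise, $\|\Psi_{i,n}(t_n)\|_\Hcal^2\geq\bigl\||\Psi_{i,n}(t_n)|\bigr\|_\Hcal^2\to\|u_i^*\|_\Hcal^2$ and $\sum_i\|\Psi_{i,n}(t_n)\|_\Hcal^2=\a_n\to\a^*=\sum_i\|u_i^*\|_\Hcal^2$, so $\|\Psi_{i,n}(t_n)\|_\Hcal\to\|u_i^*\|_\Hcal$ for each $i$ separately. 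Writing $\Psi_{i,\infty}=e^{\icomp\theta_i(x)}u_i^*(x)$, the identity $\|\Psi_{i,\infty}\|_\Hcal^2=\|u_i^*\|_\Hcal^2+\int_\Omega(u_i^*)^2|\nabla\theta_i|^2\,dx$ together with weak lower semicontinuity forces $\nabla\theta_i=0$ on $\{u_i^*>0\}$; since $u_i^*$ is strictly positive in the interior of the connected $\Omega$ (maximum principle), each $\theta_i$ equals a constant $s_i$. Weak convergence combined with convergence of $\Hcal$-norms upgrades this to strong $\HcalC$-convergence $\Psi_n(t_n)\to(e^{\icomp s_1}u_1^*,e^{\icomp s_2}u_2^*)$, contradicting the choice of $t_n$. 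The most delicate point is this final phase identification: upgrading modulus convergence to complex convergence modulo constant phases requires the \emph{componentwise} matching of $\Hcal$-norms and essentially uses the strict positivity of $u_i^*$, whereas the scalar reduction to $\Phi$ and the Lyapunov-style trapping are relatively direct consequences of (M3) and the continuity of $M$.
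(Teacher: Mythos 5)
Your proof is correct and follows essentially the same route as the paper's: the same auxiliary function $e(\alpha)=\tfrac{\alpha}{2}-\gamma^* M(\alpha,\rho_1^*,\rho_2^*)$ with $e'(\alpha)=\tfrac12(1-\gamma^*/\gamma(\alpha))$, the same norm-trapping argument via conservation laws and the diamagnetic inequality, and the same compactness-plus-uniqueness contradiction along a sequence of times $t_n$. The only (harmless) variation is the final phase identification, where you use componentwise convergence of the $\Hcal$-norms together with the polar decomposition $|\nabla\Psi_i|^2=|\nabla u_i^*|^2+(u_i^*)^2|\nabla\theta_i|^2$, whereas the paper reaches the same conclusion by invoking the equality case of the diamagnetic inequality in Lemma \ref{lemma:uniqueness_complexcase}.
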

From now on we will work under the assumptions of Theorem \ref{thm:stability_general}.
As we mentioned in the introduction, the proof is inspired by \cite{Shatah1983}.

Let us first check the following consequence of the uniqueness property (M1).
\begin{lemma}\label{lemma:uniqueness_complexcase}
Given $\alpha,\rho_1,\rho_2>0$, we have
\begin{equation}\label{eq:Malpha_complex}
M(\alpha,\rho_1,\rho_2)=\sup\left\{ F(w_1,w_2):\ (w_1,w_2)\in \HcalC,\
(|w_1|,|w_2|)\in\Ucal(\alpha, \rho_1,\rho_2)\right\}.
\end{equation}
Moreover, if $(w_1,w_2)\in \HcalC$ achieves $M(\alpha^*,\rho_1^*,\rho_2^*)$, then
\[
w_1=e^{\icomp s_1}u_1^*,\qquad w_2=e^{\icomp s_2}u_2^*
\]
for some $s_1,s_2\in \R$.
\end{lemma}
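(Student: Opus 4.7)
The proof rests on two standard ingredients: the diamagnetic inequality $|\nabla|w|| \le |\nabla w|$ a.e.\ for any $w \in H^1(\Omega;\C)$, and the observation that $F$ depends on its complex arguments only through their moduli, so that $F(w_1,w_2) = F(|w_1|,|w_2|)$. Together these reduce the complex maximization to its real counterpart.

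For \eqref{eq:Malpha_complex}, the direction ``$\le$'' is trivial, since any real maximizer for $M(\alpha,\rho_1,\rho_2)$ belongs to $\HcalC$ with moduli in $\Ucal(\alpha,\rho_1,\rho_2)$. For the reverse, given any admissible complex pair, one has $F(w_1,w_2) = F(|w_1|,|w_2|) \le M(\alpha,\rho_1,\rho_2)$ by the definition of $M$ applied to $(|w_1|,|w_2|) \in \Ucal(\alpha,\rho_1,\rho_2)$.

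For the uniqueness up to phase, suppose $(w_1,w_2)$ achieves $M(\alpha^*,\rho_1^*,\rho_2^*)$. By the identity $F(|w_1|,|w_2|) = F(w_1,w_2) = M(\alpha^*,\rho_1^*,\rho_2^*)$, the pair $(|w_1|,|w_2|)$ is a real maximizer, and (M1) forces $(|w_1|,|w_2|) = (u_1^*,u_2^*)$ a.e. To promote this modulus-level equality to equality up to a constant phase, I would combine Lemma~\ref{lemma:maximizer_solves_sys}, which guarantees $\|(u_1^*,u_2^*)\|_\Hcal^2 = \alpha^*$, with the diamagnetic sandwich
\[
\alpha^* = \|(u_1^*,u_2^*)\|_\Hcal^2 = \|(|w_1|,|w_2|)\|_\Hcal^2 \le \|(w_1,w_2)\|_\Hcal^2 \le \alpha^*.
\]
Equality throughout yields $|\nabla w_i| = |\nabla u_i^*|$ a.e.; polar-decomposing $w_i = e^{\icomp\theta_i(x)} u_i^*$ (valid since $u_i^* > 0$ in the interior by the strong maximum principle, and since the connected domain $\Omega$ allows a single-valued lift of the phase) and computing $|\nabla w_i|^2 = |\nabla u_i^*|^2 + (u_i^*)^2|\nabla\theta_i|^2$, one concludes $\nabla\theta_i \equiv 0$, whence $\theta_i = s_i$ is constant.

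The delicate step is closing the diamagnetic sandwich: although the admissibility condition in \eqref{eq:Malpha_complex} only constrains $|w_i|$, triggering the equality case requires the natural complexification $\|(w_1,w_2)\|_\Hcal^2 \le \alpha^*$ of the energy constraint, which tacitly accompanies the notion of ``maximizer'' relevant for the stability argument to come. This is the only substantive use of the diamagnetic inequality beyond the trivial $F$-level bound, and it is precisely what forces the phase to be rigid rather than merely a function of position.
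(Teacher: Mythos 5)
Your proof is correct and follows essentially the same route as the paper's: the diamagnetic inequality gives \eqref{eq:Malpha_complex}, (M1) pins down the moduli, and the equality case of the diamagnetic inequality, forced by the sandwich $\|(|w_1|,|w_2|)\|^2_\Hcal=\alpha^*=\|(w_1,w_2)\|^2_\Hcal$, rigidifies the phase. The paper extracts constancy of the phase by invoking the Lieb--Loss characterization of equality (real and imaginary parts proportional, so $w_i=(1+k_i\icomp)u_i=r_ie^{\icomp s_i}u_i$) rather than your polar-decomposition computation, but the two are interchangeable, and your observation that the sandwich tacitly requires the full complex bound $\|(w_1,w_2)\|^2_\Hcal\le\alpha^*$ matches the convention under which the lemma is actually applied in the stability proof.
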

\begin{proof}
Denote by $\widetilde M(\alpha,\rho_1,\rho_2)$ the right hand side of \eqref{eq:Malpha_complex};
clearly, $M(\alpha,\rho_1,\rho_2)\leq \widetilde M(\alpha,\rho_1,\rho_2)$. On the other hand, given
any $(w_1,w_2)\in \HcalC$ satisfying
\[
\|(w_1,w_2)\|^2_\Hcal\leq \alpha,\qquad \int_\Omega |w_i|^2\, dx=\rho_i,
\]
by the diamagnetic inequality\footnote{Take $w:\Omega\to \C$ such that $\int_\Omega |\nabla w|^2\,
dx<\infty$. Then $\int_\Omega |\nabla |w||^2 \, dx\leq \int_\Omega |\nabla w|^2\, dx$. Moreover,
equality holds if and only if the real and imaginary parts of $w$ are proportional functions.
See e.g. [Lieb-Loss, Analysis, Theorem 7.21].} it is clear that $(|w_1|,|w_2|)\in \Ucal(\alpha,
\rho_1,\rho_2)$ with $F(|w_1|,|w_2|)=F(w_1,w_2)$. Thus equality \eqref{eq:Malpha_complex} holds.

Let us now check the second statement of the lemma. Take $(w_1,w_2)\in \HcalC$ achieving
$M(\alpha^*,\rho_1^*,\rho_2^*)$. By the considerations of the previous paragraph, we have that
also $(|w_1|,|w_2|)$ achieves $M(\alpha^*,\rho_1^*,\rho_2^*)$, and in particular (cf. Lemma
\ref{lemma:maximizer_solves_sys})
\[
\int_\Omega |\nabla |w_i||^2\, dx=\int_\Omega |\nabla w_i|^2\, dx (=\alpha^*).
\]
Thus there exists $(u_1,u_2)\in \Hcal$ (real valued) and $k_i\in \R$ such that
\[
w_i=u_i+\icomp k_i u_i =(1+k_i\icomp) u_i =r_i e^{\icomp s_i}u_i
\]
for some $r_1,r_2>0$, $s_1,s_2\in \R$. By (M1), we have that $(|w_1|,|w_2|)=(r_1 |u_1|,r_2|u_2|)=(u_1^*,u_2^*)$, which ends the proof.
\end{proof}

\begin{lemma}\label{lemma:auxSTAB1}
Take $(\psi_1,\psi_2)\in \HcalC$ and assume that, for some $\bar\alpha\in(\alpha_1,\alpha_2)$, we have
\begin{equation}\label{eq:auxSTAB1}
\Ecal_{\gamma^*}(\psi_1,\psi_2)<\frac{\bar\alpha}{2}-\gamma^* M(\bar\alpha,\Qcal(\psi_1),\Qcal(\psi_2)).
\end{equation}
Then
\begin{align*}
\|(\psi_1,\psi_2)\|^2_\Hcal&<\bar \alpha \quad  \Rightarrow \quad  \|(\Psi_1(t),\Psi_2(t))\|^2_\Hcal<\bar\alpha \qquad \forall \, t\ \textrm{ in the existence interval}\\
\|(\psi_1,\psi_2)\|^2_\Hcal&>\bar \alpha \quad  \Rightarrow \quad  \|(\Psi_1(t),\Psi_2(t))\|^2_\Hcal>\bar\alpha \qquad \forall \, t\ \textrm{ in the existence interval}.
\end{align*}
\end{lemma}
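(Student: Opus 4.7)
The plan is to show that the map $h(t):=\|(\Psi_1(t),\Psi_2(t))\|^2_{\Hcal}$ cannot cross the level $\bar\alpha$ along the flow; once this is established, both implications follow from continuity via the Intermediate Value Theorem applied to $h(t)-\bar\alpha$.

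First, I would rely on (LWP) to guarantee that $(\Psi_1(\cdot),\Psi_2(\cdot))$ is a continuous curve in $\Hcal$ on the existence interval (so that $h$ is continuous), and to invoke the conservation laws
\[
\Qcal(\Psi_i(t))=\Qcal(\psi_i),\qquad \Ecal_{\gamma^*}(\Psi_1(t),\Psi_2(t))=\Ecal_{\gamma^*}(\psi_1,\psi_2),
\]
which are valid for every $t$ in the existence interval.

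The core step is to rule out the equality $h(\bar t)=\bar\alpha$ by contradiction. If this occurred at some time $\bar t$, then $(\Psi_1(\bar t),\Psi_2(\bar t))\in\HcalC$ would have norm squared equal to $\bar\alpha$ and masses $\Qcal(\psi_1),\Qcal(\psi_2)$. By the diamagnetic inequality, the pair of moduli $(|\Psi_1(\bar t)|,|\Psi_2(\bar t)|)$ lies in $\Ucal(\bar\alpha,\Qcal(\psi_1),\Qcal(\psi_2))$ (note that the compatibility condition $\bar\alpha\geq\lambda_{V_1}\Qcal(\psi_1)+\lambda_{V_2}\Qcal(\psi_2)$ is automatic from the Poincaré-type inequality $\int_\Omega(|\nabla u|^2+V_iu^2)\,dx\geq\lambda_{V_i}\int_\Omega u^2\,dx$). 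Hence Lemma \ref{lemma:uniqueness_complexcase} yields
\[
F(\Psi_1(\bar t),\Psi_2(\bar t))\leq M(\bar\alpha,\Qcal(\psi_1),\Qcal(\psi_2)),
\]
and combining this with the conservation of energy gives
\[
\Ecal_{\gamma^*}(\psi_1,\psi_2)=\frac{\bar\alpha}{2}-\gamma^*F(\Psi_1(\bar t),\Psi_2(\bar t))\geq\frac{\bar\alpha}{2}-\gamma^*M(\bar\alpha,\Qcal(\psi_1),\Qcal(\psi_2)),
\]
directly contradicting the hypothesis \eqref{eq:auxSTAB1}.

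The argument is essentially immediate once Lemma \ref{lemma:uniqueness_complexcase} is in hand; the only nontrivial point is the passage from complex-valued states to the real-valued variational problem, which is exactly what that lemma (through the diamagnetic inequality) provides. Thus the main work is not so much technical as conceptual, namely identifying the correct barrier $\frac{\bar\alpha}{2}-\gamma^*M(\bar\alpha,\Qcal(\psi_1),\Qcal(\psi_2))$ and recognizing it as a strict obstruction to $h$ crossing $\bar\alpha$.
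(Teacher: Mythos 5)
Your proof is correct and follows essentially the same route as the paper's: assume $\|(\Psi_1(\bar t),\Psi_2(\bar t))\|_\Hcal^2=\bar\alpha$ at some time, use conservation of mass and energy together with the (complex, via the diamagnetic inequality) characterization of $M$ to contradict \eqref{eq:auxSTAB1}, then conclude by continuity of $t\mapsto\|(\Psi_1(t),\Psi_2(t))\|_\Hcal^2$. If anything, you are slightly more explicit than the paper about the passage from complex-valued states to the real variational problem and about the intermediate value argument, both of which the paper leaves implicit.
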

\begin{proof}
Suppose, in view of a contradiction, that for some $\bar t$ we have
\[
\|(\Psi_1(\bar t),\Psi_2( \bar t))\|_\Hcal^2=\bar\alpha.
\]
Then, by assumption \eqref{eq:auxSTAB1} and the conservation of energy,
\begin{align*}
\frac{\bar \alpha}{2}-\gamma^* F(\Psi_1(\bar t),\Psi_2(\bar t))  &= \Ecal_{\gamma^*}(\Psi_1(\bar t),\Psi_2(\bar t))\\
												&= \Ecal_{\gamma^*}(\psi_1,\psi_2)< \frac{\bar \alpha}{2}-\gamma^* M(\bar\alpha,\Qcal(\psi_1),\Qcal(\psi_2)),
\end{align*}
which yields
\[
M(\bar\alpha,\Qcal(\psi_1),\Qcal(\psi_2))<F(\Psi_1(\bar t),\Psi_2(\bar t)).
\]
On the other hand, by conservation of mass, we have $(\Psi_1(\bar t),\Psi_2(\bar t))\in \Ucal(\bar\alpha,\Qcal(\psi_1),\Qcal(\psi_2))$, which provides a contradiction.
\end{proof}

\begin{lemma}\label{lemma:auxSTAB2}
The function
\begin{align*}
e(\alpha):&=\frac{\alpha}{2}-\gamma^* M(\alpha,\rho_1^*,\rho_2^*)\\
		&=\Ecal_{\gamma^*}(u_1(\alpha),u_2(\alpha))
\end{align*}
has a strict local minimum at $\alpha=\alpha^*$
\end{lemma}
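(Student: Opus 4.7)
The plan is to compute $e'(\alpha)$ explicitly along the $C^1$ curve provided by (M2), and to observe that the strict monotonicity of $\gamma$ in (M3) forces $e'$ to change sign exactly at $\alpha^*$.

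First I would verify the two expressions for $e(\alpha)$ agree. Since $(u_1(\alpha),u_2(\alpha))$ achieves $M(\alpha,\rho_1^*,\rho_2^*)$ and, by Lemma \ref{lemma:maximizer_solves_sys}, lies on $\tilde{\mathcal{U}}(\alpha,\rho_1^*,\rho_2^*)$, we have $\|(u_1(\alpha),u_2(\alpha))\|^2_\Hcal = \alpha$, hence
\[
\Ecal_{\gamma^*}(u_1(\alpha),u_2(\alpha)) = \frac{\alpha}{2} - \gamma^* F(u_1(\alpha),u_2(\alpha)) = \frac{\alpha}{2} - \gamma^* M(\alpha,\rho_1^*,\rho_2^*).
\]

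Next, write $\dot u_i = \tfrac{d}{d\alpha}u_i(\alpha) \in H^1_0(\Omega)$, which is well defined by (M2). Differentiating the mass constraint $\int_\Omega u_i^2\,dx = \rho_i^*$ gives $\int_\Omega u_i \dot u_i\,dx = 0$, and differentiating the norm identity $\|(u_1,u_2)\|^2_\Hcal = \alpha$ gives
\[
2\sum_{i=1}^2 \int_\Omega (\nabla u_i\cdot\nabla\dot u_i + V_i u_i \dot u_i)\,dx = 1.
\]
Differentiating $F(u_1(\alpha),u_2(\alpha))$ and substituting the PDE satisfied by $(u_1,u_2)$ (to rewrite $\mu_i u_i^3 + \beta u_i u_j^2 = \gamma(\alpha)^{-1}(-\Delta u_i + V_i u_i + \omega_i u_i)$), then using the two identities above and integrating by parts, we obtain
\[
\frac{d}{d\alpha} F(u_1(\alpha),u_2(\alpha)) = \frac{1}{2\gamma(\alpha)}.
\]
Recall that $\gamma(\alpha)>0$ on $(\alpha_1,\alpha_2)$ by Lemma \ref{lemma:gamma>0}, so this is legitimate. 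Therefore
\[
e'(\alpha) = \frac{1}{2} - \gamma^*\cdot\frac{1}{2\gamma(\alpha)} = \frac{\gamma(\alpha)-\gamma^*}{2\gamma(\alpha)}.
\]

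Finally, by (M3) the map $\alpha\mapsto\gamma(\alpha)$ is strictly increasing with $\gamma(\alpha^*)=\gamma^*$, so $e'(\alpha)<0$ for $\alpha<\alpha^*$ and $e'(\alpha)>0$ for $\alpha>\alpha^*$ in a neighborhood of $\alpha^*$. This yields the strict local minimum. The only subtlety I anticipate is the bookkeeping in the derivative computation, ensuring that the orthogonality $\int u_i\dot u_i=0$ kills the $\omega_i$--terms arising from the PDE substitution; once those cancel, the conclusion follows directly from the monotonicity hypothesis.
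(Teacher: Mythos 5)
Your proposal is correct and follows essentially the same route as the paper: differentiate the mass and norm constraints along the curve from (M2), test the Euler--Lagrange equation against $\frac{d}{d\alpha}(u_1,u_2)$ (equivalently, substitute the PDE into $\frac{d}{d\alpha}F$) to get $e'(\alpha)=\frac12\bigl(1-\gamma^*/\gamma(\alpha)\bigr)$, and conclude from the strict monotonicity in (M3). The bookkeeping you flag --- the $\omega_i$--terms being killed by $\int_\Omega u_i\dot u_i\,dx=0$ and the positivity of $\gamma$ from Lemma \ref{lemma:gamma>0} --- is exactly how the paper's computation closes.
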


\begin{proof}
\medbreak

\noindent \underline{Step 1.} Let
\[
\frac{d}{d\alpha} (u_1(\alpha),u_2(\alpha))
=:(v_1(\alpha),v_2(\alpha)).
\]
Differentiating the identities
\[
\sum_{i=1}^2 \int_\Omega (|\nabla u_i|^2+V_i(x)u_i^2)\, dx=\alpha, \qquad \int_\Omega u_i^2\, dx=\rho_i^* \ (i=1,2)
\]
with respect to $\alpha$, we obtain
\begin{equation}\label{eq:identity_differentiate_wrto_alpha}
\sum_{i=1}^2\int_\Omega (\nabla u_i\cdot \nabla v_i+V_i(x) u_i v_i)\, dx=\frac{1}{2}, \qquad \int_\Omega u_i v_i\, dx=0\  (i=1,2).
\end{equation}
Test the equation for $u_i$:
\[
-\Delta u_i+ (V_i(x) +\omega_i)u_i=\gamma(\mu_i u_i^3+\beta u_i u_j^2)
\]
by $v_i$; combining the result with \eqref{eq:identity_differentiate_wrto_alpha}, we obtain:
\begin{equation}\label{eq:identity_1/2}
\gamma\int_\Omega \left(\mu_1 u_1^3v_1+\mu_2u_2^3v_2+\beta u_1u_2^2 v_1 +\beta u_1^2 u_2 v_2 \right)\, dx=\frac{1}{2}.
\end{equation}

\medbreak

\noindent \underline{Step 2.} As
\[
e(\alpha)=\frac{\alpha}{2}-\gamma^*\int_\Omega \left( \frac{\mu_1 u_1^4}{4}+\frac{\mu_2 u_2^4}{4}+\frac{\beta u_1^2 u_2^2}{2}\right)\, dx,
\]
taking the derivative in $\alpha$ we see that, by step 1,
\begin{align*}
e'(\alpha)&=\frac{1}{2}-\gamma^* \int_\Omega \left(\mu_1 u_1^3v_1+\mu_2u_2^3v_2+\beta u_1u_2^2 v_1 +\beta u_1^2 u_2 v_2\right)\, dx\\
		&=\frac{1}{2}\left(1-\frac{\gamma^*}{\gamma}\right).
\end{align*}
As $\gamma(\alpha)$ is strictly increasing in a neighborhood of $\alpha^*$ (cf. assumption (M3)), the result follows.
\end{proof}

\begin{proof}[Proof of Theorem \ref{thm:stability_general}] \hspace{0.1cm}

\smallbreak

\noindent  a) \underline{Proof of property \eqref{eq:condition_stability1}.}
Fix a small $\eps$ so that $\alpha^*\pm \eps\in (\alpha_1,\alpha_2)$ and
\[
e(\alpha^*)<e(\alpha^*\pm \eps)
\]
(recall Lemma \ref{lemma:auxSTAB2}). Moreover,  take $\eta=\eta(\eps)$ so that
\[
e(\alpha^*)<e(\alpha^*\pm \eps)-\eta,
\]
which we can rewrite as
\[
\Ecal_{\gamma^*}(u_1^*,u_2^*)<\frac{\alpha^*\pm \eps}{2}-\gamma^* M(\alpha^* \pm \eps,\rho_1^*,\rho_2^*)-\eta.
\]
Then, for $\delta>0$ sufficiently small and $\|(\psi_1,\psi_2)-(u_1^*,u_2^*)\|^2_{\Hcal}<\delta$, we have
\[
\Ecal_{\gamma^*}(\psi_1,\psi_2)<\frac{\alpha^*\pm \eps}{2}-\gamma^* M(\alpha^* \pm \eps,\Qcal(\psi_1),\Qcal(\psi_2)),
\]
where we have used the $\Hcal$--continuity of $\Ecal_{\gamma^*}$ and $\Qcal$, as well as Lemma
\ref{lem:M_is_continuous}. Moreover, since we have (for $\delta$ small)
\[
\alpha^*-\delta<\| (\psi_1,\psi_2)\|_\Hcal^2<\alpha^*+\delta,
\]
then Lemma \ref{lemma:auxSTAB1} applied with $\bar\alpha=\alpha^*\pm\eps$ implies that
\[
\alpha^*-\eps<\| (\Psi_1(t),\Psi_2(t))\|_\Hcal^2<\alpha^*+\eps
\]
and in particular $(\Psi_1(t),\Psi_2(t))$ is defined for all $t\geq 0$ (as the existence interval in time is uniform with respect to the norm of the initial data, cf. (LWP)).

\medbreak

\noindent b) \underline{Proof of property \eqref{eq:condition_stability2}.}
If \eqref{eq:condition_stability2} does not hold, then we can find initial data $(\psi_{1n},\psi_{2n})\to (u_1^*,u_2^*)$ in $\HcalC$, a sequence $(t_n)_n$, and $\eta>0$ such that
\begin{equation}\label{eq:stability_contradiction_assumption}
\inf_{s_1,s_2\in \R} \| (\Psi_{1n}(t_n),\Psi_{2n}(t_n))-(e^{\icomp s_1}u_1^*,e^{\icomp s_2}u_2^*)\|_\Hcal\geq \eta
\end{equation}
(here, of course, $(\Psi_{1n},\Psi_{2n})$ is the solution to \eqref{eq:NLS_with_gamma}
corresponding to the initial datum $(\psi_{1n},\psi_{2n})$). By a), we can suppose without loss of generality that the sequences satisfy
\begin{equation}\label{eq:stability_cons_step1_1}
\Ecal_{\gamma^*}(\psi_{1n},\psi_{2n})<\frac{1}{2}\left(\alpha^*\pm \frac{1}{n}\right)-\gamma^* M\left(\alpha^*\pm \frac{1}{n},\rho_1^*,\rho_2^*\right)
\end{equation}
and
\begin{equation}\label{eq:stability_cons_step1_2}
\alpha^*-\frac{1}{n}<\|(\Psi_{1n}(t_n),\Psi_{2n}(t_n))\|^2_\Hcal<\alpha^*+\frac{1}{n}.
\end{equation}
Moreover, by the conservation of mass along trajectories,
\[
\int_\Omega |\Psi_{in}(t_n)|^2\, dx=\int_\Omega |\psi_{in}|^2\to \rho_i^*\qquad i=1,2.
\]
In particular, $(\Psi_{1n}(t_n),\Psi_{2n}(t_n))$ is bounded in $\Hcal$, hence up to a subsequence we have weak convergence in $\HcalC$ to $(w_1,w_2)$, strongly in $L^2\cap L^4$. The limiting configuration then satisfies
\begin{equation}\label{eq:compactneeded1}
\|(w_1,w_2)\|^2_\Hcal\leq \alpha^*,\qquad \int_\Omega w_i^2\, dx=\rho_i\ i=1,2
\end{equation}
so that $F(w_1,w_2)\leq M(\alpha^*,\rho_1^*,\rho_2^*)$. On the other hand, we have
\begin{align*}
\alpha^*-\frac{1}{n}-\gamma^* F(\Psi_{1n}(t_n),\Psi_{2n}(t_n))&<\Ecal_{\gamma^*}(\Psi_{1n}(t_n),\Psi_{2n}(t_n))
=\Ecal_{\gamma^*}(\psi_{1n},\psi_{2n})\\
&<\frac{1}{2}\left(\alpha^*-\frac{1}{n}\right)-\gamma^* M(\alpha^*-\frac{1}{n},\rho_1^*,\rho_2^*),
\end{align*}
where the first inequality is due to \eqref{eq:stability_cons_step1_2} and the second one to \eqref{eq:stability_cons_step1_1}. Hence
\[
M\left(\alpha^*-\frac{1}{n},\rho_1^*,\rho_2^*\right)<F(\Psi_{1n}(t_n),\Psi_{2n}(t_n))
\]
and (by (M2) and again by strong $L^4$ convergence)
\begin{equation}\label{eq:compactneeded2}
M(\alpha^*,\rho_1^*,\rho_2^*)\leq F(w_1,w_2).
\end{equation}
Thus $(w_1,w_2)$ achieves $M(\alpha^*,\rho_1^*,\rho_2^*)$ and $(\Psi_{1n}(t_n),\Psi_{2n}(t_n))\to (w_1,w_2)$
strongly in $\HcalC$. Finally, we obtain a contradiction by combining
\eqref{eq:stability_contradiction_assumption} with Lemma \ref{lemma:uniqueness_complexcase}.
\end{proof}

\begin{proof}[End of the proof of Theorem \ref{thm:intro2}]
In the assumptions of the theorem, let us fix any $\alpha\in(\alpha_1,\alpha_2)$, and relabel the
triplet $(\a,\rho_1,\rho_2)$ as $(\a^*,\rho_1^*,\rho_2^*)$. If $M(\a^*,\rho_1^*,\rho_2^*)$ is
achieved by a unique pair, then the proof follows from Theorem \ref{thm:stability_general}, once
one notices that $(\Psi_1,\Psi_2)$ solves \eqref{eq:NLS_with_gamma},
if and only if $(\Phi_1,\Phi_2)=\sqrt{\gamma^*}(\Psi_1,\Psi_2)$ solves \eqref{eq:intro_sys}.
Without the uniqueness assumption, one may repeat the proof with minor changes, observing that,
by Corollary \ref{coro:strong_conv}, the set of pairs $(u_1,u_2)$ achieving
$M(\a^*,\rho_1^*,\rho_2^*)$ is compact in $\Hcal$.
\end{proof}

\section{Applications}\label{sec:appl}

\subsection{The case of small masses}\label{subsec:AP}

To prove Theorem \ref{thm:intro3}, we will use the following parametric version of a
well known result due to Ambrosetti and Prodi \cite{AmbrosettiProdiBook}.
In the following, $\Ker$ and $\Rk$ denote respectively the kernel and the range of a
linear operator.
\begin{theorem}\label{thm:AmbrosettiProdi_parametric}
Let $X,Y$ be Banach spaces, $U\subset X$ an open set, $I\subset \R$ an open interval,
and $\Phi\in C^2(U\times I,Y)$.

Take $(x^*,\vtheta^*)\in U\times I$ such that:
\begin{enumerate}
\item\label{ass:critpointcurve}
    there exists a continuous curve $\bar x : I \to U$, with $\bar x(\vtheta^*)=x^*$, and
    \[
    \begin{cases}
    \Phi(x,\vtheta)=0\\
    (x,\vtheta)\in U\times I
    \end{cases}
    \qquad\iff\qquad
    x=\bar x(\vtheta), \ \vtheta\in I;
    \]
\item\label{ass:kernel}
     there exists $\vphi^*\in X$, non trivial, such that
     \[
     \Ker(\Phi_x(x^*,\vtheta^*))
     = \spann\{\vphi^*\};
     \]
\item\label{ass:rank}
     there exists a nontrivial $\Psi\in Y^*$ (independent of $\vtheta$) such that
     \[
     \Rk(\Phi_x (\bar x(\vtheta),\vtheta)) = \Ker\Psi\qquad\text{for every }\vtheta\in I;
     \]
\item\label{ass:nondeg}
     $\langle \Psi, \Phi_{xx}(x^*,\vtheta^*)[\vphi^*,\vphi^*] \rangle >0$.\end{enumerate}
Finally, let $z\in Y$ be such that $\langle \Psi, z\rangle = 1$.

Then there exist $\bar\delta>0$, $\bar\ep>0$ such that, for every $|\vtheta-\vtheta^*|
<\bar\delta$ the equation
\begin{equation}\label{eq:AP_abstractequation}
\Phi(x,\vtheta)= \ep z,\qquad x \in B_{\bar\delta}(x^*)
\end{equation}
has no solutions when $-\bar\eps\leq\eps<0$, while for each $0<\eps\leq\bar\eps$ it
has exactly two solutions
\[
x=x_+(\eps,\vtheta),\qquad x=x_-(\eps,\vtheta).
\]
Furthermore, the maps $x_\pm: (0,\bar\eps]\times(\vtheta^*-\bar\delta, \vtheta^*+\bar\delta)\to
B_{\bar\delta}(x^*)$ are of class $C^2$ and continuous up to $\eps=0^+$. More precisely,
\begin{equation}\label{eq:ambrosetti_prodi_expansion}
x_\pm(\eps,\vtheta) = \bar x(\vtheta) + \left[ \vphi^* +
\eta_\pm(\ep,\vtheta)\right]t_\pm (\eps,\vtheta),
\end{equation}
where the maps $\eta_\pm$ are $C^1([0,\bar\ep]\times(\vtheta^*-\bar\delta, \vtheta^*+\bar\delta))$
with $\eta(0,\vtheta^*)=0$, while the
functionals $t_\pm$ are $C^2$ for $\eps>0$, continuous (and vanishing) up to $\eps=0^+$, and
\[
\pm t_\pm(\eps,\vtheta) >0,
\quad \pm\frac{\partial t_\pm(\ep,\vtheta)}{\partial\ep} \geq \frac{C}{\sqrt\ep},
\qquad\text{ for }\eps\in(0,\bar\ep],
\]
for a suitable $C>0$ (related to the positive number appearing in assumption \eqref{ass:nondeg}).
\end{theorem}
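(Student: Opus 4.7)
The plan is a Lyapunov--Schmidt reduction around the solution curve $\bar x$, followed by the analysis of a scalar bifurcation equation whose leading behaviour in the ``bifurcation direction'' $t$ is quadratic with strictly positive coefficient. First I would normalize by writing $x = \bar x(\vtheta) + t\vphi^* + w$, with $w$ in a fixed closed complement $X_1$ of $\spann\{\vphi^*\}$ in $X$, and split $Y = \spann\{z\}\oplus \Ker\Psi$ using $\langle\Psi,z\rangle = 1$. Denote by $P\colon Y\to \Ker\Psi$ the resulting projection---which, crucially, is independent of $\vtheta$ by assumption \eqref{ass:rank}---and set $L(\vtheta):=\Phi_x(\bar x(\vtheta),\vtheta)$. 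Applying $P$ and $\langle\Psi,\cdot\rangle$ splits \eqref{eq:AP_abstractequation} into an auxiliary equation
\[
P\,\Phi(\bar x(\vtheta) + t\vphi^* + w,\vtheta) = 0
\]
(independent of $\ep$, since $Pz=0$) and a scalar bifurcation equation $\langle\Psi, \Phi(\bar x(\vtheta) + t\vphi^* + w,\vtheta)\rangle = \ep$.

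The linearization of the auxiliary equation in $w$ at $(t,w,\vtheta)=(0,0,\vtheta^*)$ is $PL(\vtheta^*)|_{X_1}$, which is an isomorphism $X_1\to\Ker\Psi$: by \eqref{ass:kernel}--\eqref{ass:rank}, $L(\vtheta^*)|_{X_1}$ is injective with range $\Ker\Psi$, and $P$ is the identity on $\Ker\Psi$. The implicit function theorem then yields a $C^2$ map $(t,\vtheta)\mapsto w(t,\vtheta)\in X_1$ solving it near $(0,\vtheta^*)$; uniqueness, combined with \eqref{ass:critpointcurve} (which says that $t=0$, $w=0$ parameterizes the known zero set of $\Phi$), forces $w(0,\vtheta)\equiv 0$ in a neighbourhood of $\vtheta^*$. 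Substitution produces the reduced scalar equation $g(t,\vtheta) = \ep$ with $g(t,\vtheta) := \langle\Psi,\Phi(\bar x(\vtheta) + t\vphi^* + w(t,\vtheta),\vtheta)\rangle$.

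Next I would compute the Taylor expansion of $g$ at $t=0$. From $w(0,\vtheta)\equiv 0$ one gets $g(0,\vtheta) = 0$; since $\Rk L(\vtheta)\subset \Ker\Psi$ by \eqref{ass:rank}, $\partial_t g(0,\vtheta) = 0$ as well. Differentiating the auxiliary equation in $t$ at $(0,\vtheta^*)$ and invoking $L(\vtheta^*)\vphi^*=0$ together with the invertibility of $PL(\vtheta^*)|_{X_1}$ gives $\partial_t w(0,\vtheta^*) = 0$. Hence
\[
\partial_{tt} g(0,\vtheta^*) = \langle\Psi,\Phi_{xx}(x^*,\vtheta^*)[\vphi^*,\vphi^*]\rangle =: 2a^* > 0
\]
by \eqref{ass:nondeg}, and we may write $g(t,\vtheta) = a(\vtheta) t^2 + t^3 r(t,\vtheta)$ with $a\in C^0$ and $a(\vtheta)>0$ in a full neighbourhood of $\vtheta^*$. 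In particular $g\ge 0$ locally, ruling out small solutions of \eqref{eq:AP_abstractequation} for $\ep<0$.

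For $\ep>0$ I would solve $g(t,\vtheta)=\ep$ via the rescaling $t = \pm\sqrt{\ep}\,\tau$: the equation becomes $a(\vtheta)\tau^2 \pm \sqrt{\ep}\,r(\pm\sqrt{\ep}\,\tau,\vtheta)\tau^3 = 1$, which at $\ep=0$ has the simple positive root $\tau_0(\vtheta) = 1/\sqrt{a(\vtheta)}$, so a further application of the implicit function theorem yields $\tau_\pm(\ep,\vtheta)>0$ continuous up to $\ep=0^+$ and $C^2$ for $\ep>0$. Setting $t_\pm := \pm\sqrt{\ep}\,\tau_\pm$ gives the two branches with $\pm t_\pm > 0$ and
\[
\pm\partial_\ep t_\pm(\ep,\vtheta) = \frac{\tau_\pm(\ep,\vtheta)}{2\sqrt{\ep}} + \sqrt{\ep}\,\partial_\ep\tau_\pm(\ep,\vtheta) \geq \frac{C}{\sqrt{\ep}}.
\]
The expansion \eqref{eq:ambrosetti_prodi_expansion} finally follows from the factorization $w(t,\vtheta)= t\,\eta(t,\vtheta)$, valid since $w(0,\vtheta^*)=0$ and $\partial_t w(0,\vtheta^*)=0$, upon defining $\eta_\pm(\ep,\vtheta):=\eta(t_\pm(\ep,\vtheta),\vtheta)$. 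The main technical obstacle I anticipate is keeping every implicit-function estimate uniform in $\vtheta$ across a full neighbourhood of $\vtheta^*$: this is precisely the role of the constant-range hypothesis \eqref{ass:rank}, which freezes the splitting of $Y$ once and for all and spares us a $\vtheta$-dependent projection $P$ (which would otherwise destroy the clean factorization of the bifurcation equation).
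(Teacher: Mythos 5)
Your overall strategy --- a Lyapunov--Schmidt reduction followed by the analysis of a scalar bifurcation equation whose second $t$-derivative is strictly positive --- is exactly the paper's. Two of your implementation choices, however, create real problems that the paper's version is specifically designed to avoid. First, the expansion $g(t,\vtheta)=a(\vtheta)t^2+t^3 r(t,\vtheta)$ with $r$ continuous is not available: $\Phi$ (hence $g$) is only $C^2$, so Taylor's theorem gives a remainder that is $o(t^2)$, not $O(t^3)$, and your rescaled equation $a(\vtheta)\tau^2\pm\sqrt{\ep}\,r(\pm\sqrt{\ep}\,\tau,\vtheta)\tau^3=1$ is not in a form to which the implicit function theorem applies at $\ep=0$ (one would need $r$ differentiable in $\tau$, i.e.\ even more regularity of $\Phi$). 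The paper avoids the rescaling altogether: from $\partial_{tt}g(0,\vtheta^*)>0$ and joint continuity of $\partial_{tt}g$ it extracts two-sided bounds $2C_1\le \partial_{tt}g\le 2C_2$ on a full box $[-\bar\delta,\bar\delta]^2$, hence $2C_1|t|\le \mathrm{sign}(t)\,\partial_t g\le 2C_2|t|$ and $C_1t^2\le g\le C_2t^2$; monotonicity of $g(\cdot,\vtheta)$ on each side of $t=0$ then gives existence and uniqueness of $t_\pm$, the implicit function theorem applies for $t\neq0$ because $\partial_t g\neq0$ there, and the key estimate follows directly from
\[
\pm\frac{\partial t_\pm}{\partial\ep} = \frac{\pm1}{\partial_t g(t_\pm,\vtheta)}\ \ge\ \frac{1}{2C_2|t_\pm|}\ \ge\ \frac{\sqrt{C_1}}{2C_2}\,\frac{1}{\sqrt{\ep}},
\]
with no need to control the term $\sqrt{\ep}\,\partial_\ep\tau_\pm$ that your computation leaves unestimated.

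Second, you center the reduction at $\bar x(\vtheta)$, which is only assumed \emph{continuous} in $\vtheta$; consequently the map $(t,w,\vtheta)\mapsto P\Phi(\bar x(\vtheta)+t\vphi^*+w,\vtheta)$ is merely continuous in $\vtheta$, your implicit function $w(t,\vtheta)$ is a priori only continuous in $\vtheta$, and the asserted $C^2$ regularity of $x_\pm$ and $C^1$ regularity of $\eta_\pm$ in $(\ep,\vtheta)$ do not follow. The paper instead centers the splitting at the fixed point $x^*$, so its implicit function $w(t,\vtheta)$ is $C^2$ jointly by the smoothness of $\Phi$ alone; the identity $w(0,\vtheta)=\bar x(\vtheta)-x^*$ (forced by uniqueness and assumption \eqref{ass:critpointcurve}) then upgrades $\bar x$ to $C^2$ for free, and the expansion \eqref{eq:ambrosetti_prodi_expansion} is recovered by writing $\eta_\pm=[w(t_\pm,\vtheta)-w(0,\vtheta)]/t_\pm$. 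Both issues are repairable, but as written these two steps do not go through.
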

The proof of such theorem follows very closely the one of the original Ambrosetti-Prodi result
\cite[Section 3.2, Lemma 2.5]{AmbrosettiProdiBook}, taking however into account the
dependence on the parameter  $\vtheta$, which is not present in the latter. For the reader's convenience,
here we summarize the proof, enlightening the main differences.
\begin{proof}
To start with, we apply a Lyapunov-Schmidt reduction to equation \eqref{eq:AP_abstractequation}.
To this aim, let $L:= \Phi_x(x^*,\vtheta^*) \in\mathcal{L}(X,Y)$, and let $W\subset X$
denote a topological complement of $\Ker L$. Then, for every $x\in X$, we can write
\[
x = x^* + t\vphi^* + w,\qquad \text{for unique }t\in\R,\,w\in W.
\]
Analogously, since $Y=\spann\{z\} \oplus \Rk L$, for every $y\in Y$ we can uniquely write
\[
y =: \underbrace{P y}_{\in\spann\{z\}} + \underbrace{Q y}_{\in \Rk L},\qquad
\text{where }P y = \langle \Psi, y \rangle z
\]
($\Psi$ appearing in assumption \eqref{ass:rank}).
Using such decompositions, equation \eqref{eq:AP_abstractequation} writes
\begin{equation}\label{eq:LyapSchm}
\begin{cases}
P \Phi(x,\vtheta)= \langle \Psi , \Phi(x^* + t\vphi^* + w,\vtheta) \rangle z = \ep z \\
Q \Phi(x,\vtheta)= Q \Phi(x^* + t\vphi^* + w,\vtheta) = 0.
\end{cases}
\end{equation}
Now, by construction, we can apply the Implicit Function Theorem to the second equation
in order to solve for $w$ near $(t,\vtheta,w)=(0,\vtheta^*,0)$
(indeed, in such point, the partial derivative of the l.h.s. with respect to $w$ is $L:W\to\Rk L$,
which is invertible). As a consequence, for some positive $\delta$,
\[
    \begin{cases}
    Q \Phi(x^* + t\vphi^* + w,\vtheta)=0\\
    (t,\vtheta-\vtheta^*,w)\in [-\delta,\delta]^2\times B'_\delta(0)
    \end{cases}
    \iff
    w=w(t,\vtheta), \ (t,\vtheta-\vtheta^*)\in [-\delta,\delta]^2
\]
(here $B'$ denotes the ball in $W$). For future reference, we notice that, possibly decreasing
$\delta$, the $C^2$ function $w$ satisfies
\begin{equation}\label{eq:trivial_w}
w(0,\vtheta)=\bar x(\vtheta) - x^*\qquad\text{for every } \vtheta-\vtheta^*\in [-\delta,\delta].
\end{equation}
Indeed, since $\bar x(\vtheta^*)=x^*$, this follows from the fact that $w$ is the unique solution of the above equation near
$(t,\vtheta,w)=(0,\vtheta^*,0)$, together with the fact that
\[
Q \Phi(x^* + 0\cdot\vphi^* + (\bar x(\vtheta) - x^*),\vtheta)  =
Q \Phi(\bar x(\vtheta),\vtheta)=0
\]
by assumption \eqref{ass:critpointcurve}. Furthermore, we also have that
\begin{equation}\label{eq:trivial_w_t}
w_t(0,\vtheta^*)=0.
\end{equation}
Indeed, taking the partial derivative
of the second equation in \eqref{eq:LyapSchm} with respect to $t$, we obtain
\[
Q \Phi_x(x^* + t\vphi^* + w(t,\vtheta),\vtheta)[\vphi^*+w_t(t,\vtheta)] = 0;
\]
for $(t,\vtheta)=(0,\vtheta^*)$, this yields
\[
0 =  Q \Phi_x(x^* ,\vtheta^*)[\vphi^*+w_t(0,\vtheta^*)] =  L w_t(0,\vtheta^*),
\]
so that $w_t(0,\vtheta^*)\in \Ker L$. Since $w_t(0,\vtheta^*)\in W$ by definition, \eqref{eq:trivial_w_t} follows.

Substituting $w=w(t,\vtheta)$ in the first equation in \eqref{eq:LyapSchm} we obtain the bifurcation
equation
\begin{equation}\label{eq:chi=eps}
\text{find }(t,\vtheta-\vtheta^*)\in [-\delta,\delta]^2\text{ s.t. }\qquad
\chi(t,\vtheta):= \langle \Psi , \Phi(x^* + t\vphi^* + w(t,\vtheta),\vtheta) \rangle  = \ep,
\end{equation}
which is locally equivalent to \eqref{eq:AP_abstractequation}. Equation \eqref{eq:trivial_w} implies
that, for every $\vtheta-\vtheta^*\in [-\delta,\delta]$,
\[
\chi(0,\vtheta) =\langle \Psi , \Phi(\bar x(\vtheta),\vtheta) \rangle= 0.
\]
On the other hand, direct calculations yield
\[
\begin{split}
\chi_t(t,\vtheta) &= \langle \Psi , \Phi_x(x^* + t\vphi^* + w(t,\vtheta),
\vtheta)[\vphi^*+w_t(t,\vtheta)] \rangle\\
\chi_{tt}(t,\vtheta) &= \langle \Psi , \Phi_{xx}(x^* + t\vphi^* + w(t,\vtheta),
\vtheta)[\vphi^*+w_t(t,\vtheta)]^2\\
& \qquad\qquad\qquad\qquad +  \Phi_x(x^* + t\vphi^* + w(t,\vtheta),
\vtheta)[w_{tt}(t,\vtheta)]\rangle.
\end{split}
\]
Using assumption \eqref{ass:rank} and equations \eqref{eq:trivial_w}, \eqref{eq:trivial_w_t}, we infer
\begin{equation}\label{eq:chitt}
\begin{split}
\chi_t(0,\vtheta) &= 0\\
\chi_{tt}(0,\vtheta^*) &= \langle \Psi , \Phi_{xx}(x^*,\vtheta^*)[\vphi^*,\vphi^*]\rangle>0
\end{split}
\end{equation}
by assumption \eqref{ass:nondeg}. Since $\chi$ is $C^2$, we can
find positive constants $C_1$, $C_2$ such that
\[
\begin{cases}
2C_1 \leq \chi_{tt}(t,\vtheta) \leq 2C_2\\
2C_1|t| \leq \mathrm{sign}(t) \chi_{t}(t,\vtheta) \leq 2C_2|t|\\
C_1 t^2 \leq \chi(t,\vtheta) \leq C_2 t^2
\end{cases}
\qquad \text{for every }(t,\vtheta - \vtheta^*)\in[-\bar\delta,\bar\delta]^2,
\]
for some suitable $\bar\delta\leq\delta$.
As a first consequence, \eqref{eq:chi=eps} is not solvable for $\eps<0$. Furthermore, defining
\[
\bar\eps := \min_{|\vtheta - \vtheta^*|\leq \bar\delta} \chi(\pm\bar\delta,\vtheta) > 0
\]
we deduce that, for every $\vtheta - \vtheta^*\in[-\bar\delta,\bar\delta]$ and $\eps\in(0,\bar\eps]$, there
exist $-\bar\delta \leq t_-(\eps,\vtheta) < 0 < t_+(\eps,\vtheta)\leq\bar\delta$ such that
\[
    \begin{cases}
    \chi(t,\vtheta)=\ep\\
    (t,\vtheta-\vtheta^*,\ep)\in [-\bar\delta,\bar\delta]^2\times (0,\bar\eps]
    \end{cases}
    \iff
    t=t_\pm(\eps,\vtheta), \ (\vtheta-\vtheta^*,\ep)\in [-\bar\delta,\bar\delta]\times (0,\bar\eps].
\]
Clearly $t_\pm(0^+,\vtheta)=0$, uniformly in $\vtheta$. Moreover, since $\chi_t(t,\vtheta)\neq 0$
for $t\neq0$, the Implicit Function Theorem implies that the maps $t_\pm$ are $C^2$ for $\ep>0$,
with
\[
\pm\frac{\partial t_\pm(\ep,\vtheta)}{\partial\ep} = \frac{\pm 1}{\chi_t(t_\pm(\ep,\vtheta),\vtheta)}
\geq \frac{1}{2C_2|t_\pm(\ep,\vtheta)|} \geq \frac{1}{2C_2} \sqrt{\frac{C_1}{\chi(t_\pm(\ep,\vtheta),
\vtheta)}} = \frac{\sqrt{C_1}}{2C_2} \frac{1}{\sqrt\ep}.
\]
Setting
\[
\begin{split}
x_\pm(\eps,\vtheta) &= x^* + t_\pm (\eps,\vtheta) \vphi^* + w(t_\pm (\eps,\vtheta),\vtheta)\\
&= \bar x(\vtheta) + t_\pm (\eps,\vtheta) \vphi^* + w(t_\pm (\eps,\vtheta),\vtheta)-
(\bar x(\vtheta)-x^*)\\
&= \bar x(\vtheta) + \left[ \vphi^* + \frac{w(t_\pm (\eps,\vtheta),\vtheta) -
w(0,\vtheta)}{t_\pm (\eps,\vtheta)} \right]t_\pm (\eps,\vtheta) ,
\end{split}
\]
one can complete the proof by recalling that the maps
\[
\eta_\pm(\ep,\vtheta) :=
\begin{cases}
[w(t_\pm (\eps,\vtheta),\vtheta) -  w(0,\vtheta)]/t_\pm (\eps,\vtheta)  &  \eps\neq 0\\
w_t(0,\vtheta)  & \eps = 0
\end{cases}
\]
are $C^1$ up to $\eps=0$, and that $\eta(0,\vtheta^*)=0$ by equation \eqref{eq:trivial_w_t}.
\end{proof}
\begin{remark}
The following uniform in $\vtheta$ limit:
\begin{multline*}
\lim_{\eps\to0^+} \pm \frac{t_\pm (\eps,\vtheta)}{\sqrt{\eps}} =
\lim_{t \to0^+} \frac{t}{\sqrt{\chi(\pm t,\vtheta)}} =
\lim_{t \to0^+} \frac{t}{\sqrt{\chi_{tt}(\xi_\pm,\vtheta) t^2/2}} \\
\text{(for suitable $-t<\xi_-<0< \xi_+ < t$) }\qquad
=  \sqrt{\frac{2}{\chi_{tt}(0,\vtheta)}} =: a(\vtheta),
\end{multline*}
\[
\lim_{\eps\to0^+} \frac{w(t_\pm (\eps,\vtheta),\vtheta) - w(0,\vtheta)}{t_\pm (\eps,\vtheta)}
= w_t( 0, \vtheta) = \eta(0,\vtheta),
\]
implies that, as $\eps\to0^+$,
\begin{equation*}
x_\pm(\eps,\vtheta) = \bar x(\vtheta) \pm a(\vtheta)\left[\vphi^* + \eta (0,\vtheta)\right]\sqrt{\eps}
+ o(\sqrt{\eps}),\qquad
\text{ uniformly in }\vtheta,
\end{equation*}
where
\[
a(\vtheta^*) = \sqrt{\frac{2}{\langle \Psi, \Phi_{xx}(x^*,\vtheta^*)[\vphi^*,\vphi^*]\rangle}}>0.
\]
\end{remark}
\begin{remark}
A point $(x^*,\vtheta^*)$ satisfying assumptions \eqref{ass:kernel}, \eqref{ass:rank} and
\eqref{ass:nondeg}  in Theorem \ref{thm:AmbrosettiProdi_parametric} (the latter ones for $\vtheta=
\vtheta^*$) is said to be \emph{ordinary singular} for $\Phi$. As a matter of fact, assumption
\eqref{ass:critpointcurve} insures not only that $(x^*,\vtheta^*)$ is ordinary singular for $\Phi$,
but also that $(\bar x(\vtheta),\vtheta)$ exhibits an ordinary singular type geometry, at least for
$|\vtheta-\vtheta^*|$ small.
\end{remark}

In order to apply the previous abstract result, let $X= \Hcal \times \R^3$, $Y= \Hcal^*  \times \R^3$ and take the $C^2$ map $\Phi:X\times  \R \to Y$ defined by
%\[
%U=\{(u_1,u_2,\omega_1,\omega_2,\gamma)\in X:\ u_i>0 \text{ in } \Omega,\ \partial_\nu u_i<0 \text{ on } \partial \Omega,\ i=1,2\},
%\]
%\todo in $\R^N$
\begin{equation}\label{eq:definition_of_Phi}
\Phi(u_1,u_2,\omega_1,\omega_2,\gamma,\vtheta)=
\left(
\begin{array}{c}
\Delta u_1-(V_1(x)+\omega_1) u_1+\gamma(\mu_1 u_1^3+\beta u_1 u_2^2)\\
\Delta u_2-(V_2(x)+\omega_2) u_2+\gamma(\mu_2 u_2^3+\beta u_1^2 u_2) \smallskip \\
 \int_\Omega u_1^2\, dx-(\cos^2 \vtheta)/\lambda_{V_1}\smallskip \\
 \int_\Omega u_2^2\, dx-(\sin^2 \vtheta)/\lambda_{V_2} \smallskip \\
 \sum_{i=1}^2\int_\Omega \left(|\nabla u_i|^2+V_i(x)u_i^2\right)\, dx-1
\end{array}
\right).
\end{equation}
\begin{remark}\label{rem:M_iff_Phi}
Note that, recalling the definition of $\tilde{\mathcal{U}}(\a,\rho_1^*,\rho_2^*)$
from Section \ref{sec:varpro}, we have that $\Phi(u_1,u_2,\omega_1,\omega_2,\gamma,\vtheta)=(0,0,0,0,\eps)$ if and only
if
\[
(u_1,u_2) \in \tilde{\mathcal{U}}\left(1+\ep,\frac{\cos^2 \vtheta}{\lambda_{V_1}},
\frac{\sin^2 \vtheta}{\lambda_{V_2}}\right)
\text{ and equation \eqref{eq:system_main} holds.}
\]
\end{remark}
Finally, we define $\bar x:\R\to X$ as
\begin{equation}
\begin{split}
\bar x(\vtheta)&:=(\bar u_1(\vtheta),\bar u_2(\vtheta),\bar \omega_1(\vtheta),\bar \omega_2(\vtheta),\bar \gamma(\vtheta))\\&=\left(\frac{\cos \vtheta}{\sqrt{\lambda_{V_1}}}\vphi_{V_1}  , \frac{\sin \vtheta}{\sqrt{\lambda_{V_2}}} \vphi_{V_2},-\lambda_{V_1},-\lambda_{V_2},0\right),\text{ and}\\
(\bar\rho_1(\vtheta),\bar\rho_2(\vtheta)) &:=
\left(\frac{\cos^2 \vtheta}{\lambda_{V_1}},\frac{\sin^2 \vtheta}{\lambda_{V_2}}\right).
\end{split}
\end{equation}
%Let us check the assumptions of Theorem \ref{thm:AmbrosettiProdi_parametric}.
In the following, we will systematically adopt the above notation, possibly
dropping the explicit dependence on $\vtheta$ when no confusion may arise.

We start with the following lemma, which will ensure that assumption \eqref{ass:critpointcurve}
in Theorem \ref{thm:AmbrosettiProdi_parametric} holds for
$\bar x(\cdot)$ in $I=(0,\pi/2)$ (and suitable $U$).
\begin{lemma}\label{lem:ass1AP}
Take $\vtheta\not\in \Z\pi/2$ and $\eps_n\to 0^+$, and suppose that
\[
\Phi(u_{1,n},u_{2,n}, \omega_{1,n}, \omega_{2,n},\gamma_n, \vtheta)=(0,0,0,0,\eps_n).
\]
Then, up to subsequences, $\omega_{i,n}\to \bar\omega_{i}(\vtheta)$ ($i=1,2$), $\gamma_n\to 0$, and
\[
u_{1,n}\to (-1)^l\bar u_{1}( \vtheta),\quad
u_{2,n}\to (-1)^m\bar u_{2}( \vtheta),\qquad
\text{strongly in }\Hcal,
\]
for some $(l,m)\in\{0,1\}^2$.

In particular, for $\vtheta\in (0,\pi/2)$, and $\tilde U\subset \Hcal$ open, containing
the above possible limits only for $l=m=0$, we have that
\begin{equation*}
\begin{array}{c}
\Phi(u_1,u_2,\omega_1,\omega_2,\gamma,\vtheta)=(0,0,0,0,0),\qquad (u_{1},u_{2})\in\tilde U\\
 \Updownarrow\\
(u_1,u_2,\omega_1,\omega_2,\gamma)=\bar x(\vtheta).
\end{array}
\end{equation*}
\end{lemma}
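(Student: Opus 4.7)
Recalling Remark~\ref{rem:M_iff_Phi}, the hypothesis is equivalent to system~\eqref{eq:system_main} with masses $\rho_{i,n}:=\bar\rho_i(\vtheta)$ and total $\Hcal$-norm squared $\alpha_n:=1+\eps_n$. Since
\[
\lambda_{V_1}\bar\rho_1(\vtheta)+\lambda_{V_2}\bar\rho_2(\vtheta)=\cos^2\vtheta+\sin^2\vtheta=1,
\]
we have $\alpha_n\downarrow 1$, and the restriction $\vtheta\notin\Z\pi/2$ keeps $\bar\rho_i(\vtheta)>0$ bounded away from zero. Hence Lemma~\ref{lemma:bounded_quantities} delivers boundedness of $\omega_{i,n}$ and $\gamma_n$. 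Passing to subsequences, $\omega_{i,n}\to\omega_i^*$, $\gamma_n\to\gamma^*$, and $u_{i,n}\rightharpoonup u_i^*$ in $\Hcal$, strongly in $L^p$ for every $p<2^*$; in particular $\int_\Omega (u_i^*)^2\,dx=\bar\rho_i(\vtheta)$.

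\textbf{Identification of $u_i^*$.} By weak lower semicontinuity, $\|(u_1^*,u_2^*)\|_\Hcal^2\le 1$, while the Rayleigh characterization of $\lambda_{V_i}$ gives $\|u_i^*\|_\Hcal^2\ge \lambda_{V_i}\bar\rho_i(\vtheta)$, which sums to $\ge 1$. Therefore equality must hold in each inequality, forcing $u_i^*$ to minimize the Rayleigh quotient with $L^2$-mass $\bar\rho_i(\vtheta)$, so
\[
u_i^*=(-1)^{l_i}\sqrt{\bar\rho_i(\vtheta)}\,\vphi_{V_i}=(-1)^{l_i}\bar u_i(\vtheta),\qquad l_i\in\{0,1\}.
\]
Since $\|u_{i,n}\|_\Hcal^2\ge\lambda_{V_i}\bar\rho_i(\vtheta)$ and $\sum_i\|u_{i,n}\|_\Hcal^2=\alpha_n\to 1=\sum_i\lambda_{V_i}\bar\rho_i(\vtheta)$, a liminf/limsup sandwich yields $\|u_{i,n}\|_\Hcal\to\|u_i^*\|_\Hcal$, which combined with the weak $\Hcal$-convergence promotes convergence to strong.

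\textbf{Identification of $\omega_i^*$ and $\gamma^*$.} Strong convergence lets us pass to the limit in~\eqref{eq:system_main}. Using $-\Delta u_i^*+V_iu_i^*=\lambda_{V_i}u_i^*$ and the strict positivity of $\vphi_{V_i}$ in the interior of $\Omega$ (strong maximum principle), we divide by $u_i^*$ to obtain
\[
\lambda_{V_i}+\omega_i^*=\gamma^*\bigl[\mu_i\bar\rho_i(\vtheta)\,\vphi_{V_i}^2+\beta\bar\rho_j(\vtheta)\,\vphi_{V_j}^2\bigr]\qquad \text{a.e. in }\Omega.
\]
Suppose, toward a contradiction, $\gamma^*\ne 0$; then the right-hand side is a.e.\ constant. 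Since $\vphi_{V_i}^2\in L^1(\Omega)$, in the case $\Omega=\R^N$ (infinite measure) such a constant is necessarily zero; in the bounded case, continuity of $\vphi_{V_i}$ up to $\partial\Omega$ together with the Dirichlet condition $\vphi_{V_i}|_{\partial\Omega}=0$ achieves the same. Integrating the resulting identity $\mu_i\bar\rho_i\vphi_{V_i}^2+\beta\bar\rho_j\vphi_{V_j}^2\equiv 0$ over $\Omega$ gives $\mu_i\bar\rho_i+\beta\bar\rho_j=0$ for $i=1,2$, hence $\mu_1\mu_2=\beta^2$ with $\mu_1,\mu_2$ of common sign and $\beta$ of opposite sign, which is precisely the configuration ruled out by~\eqref{eq:assumptions_nondegeneracyIntro}. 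Thus $\gamma^*=0$, and then $\omega_i^*=-\lambda_{V_i}=\bar\omega_i(\vtheta)$.

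\textbf{The \emph{in particular} part.} That $\bar x(\vtheta)$ solves $\Phi(\bar x(\vtheta),\vtheta)=0$ is a direct check using $-\Delta\vphi_{V_i}+V_i\vphi_{V_i}=\lambda_{V_i}\vphi_{V_i}$. The converse is precisely the argument above applied to the stationary sequence $\eps_n\equiv 0$: the neighborhood $\tilde U$ singles out the sign choice $l_1=l_2=0$. The main obstacle in the preceding paragraph is ruling out $\gamma^*\ne 0$; the nondegeneracy assumption is essential, and the step of showing the a.e.\ constant to be zero requires the two separate arguments (boundary values for bounded $\Omega$ versus $L^1$-integrability on $\R^N$).
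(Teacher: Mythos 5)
Your proposal is correct and follows essentially the same route as the paper's proof: boundedness of the multipliers via Lemma \ref{lemma:bounded_quantities}, the Rayleigh-quotient pinching $1\leq\sum_i\lambda_{V_i}\bar\rho_i(\vtheta)\leq\liminf(1+\eps_n)=1$ to identify the weak limits as first eigenfunctions and upgrade to strong convergence, and then passage to the limit in the equation combined with the boundary behaviour and \eqref{eq:assumptions_nondegeneracyIntro} to force $\omega_i^*=-\lambda_{V_i}$ and $\gamma^*=0$. The only (immaterial) differences are that you deduce $\gamma^*=0$ before $\omega_i^*=-\lambda_{V_i}$ rather than the reverse, and that on $\R^N$ you use the $L^1$-integrability of $\vphi_{V_i}^2$ where the paper invokes the decay property \eqref{eq:decay_u}.
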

\begin{proof}
As $\eps_n$ is a bounded sequence, then $(u_{1,n},u_{2,n})$ is bounded in $\Hcal$ and, up to a
subsequence, $(u_{1,n},u_{2,n})\rightharpoonup (u_1,u_2)$ weakly in $\Hcal$, with $u_i$ being
nontrivial functions satisfying $\int_\Omega u_i^2\, dx=\bar\rho_i(\vtheta)$, $i=1,2$.
By definition of $\lambda_{V_i}$, we have
\begin{align*}
1\leq \sum_{i=1}^2\int_\Omega (|\nabla u_i|^2+V_i(x) u_i^2)\, dx  &\leq \liminf_n \sum_{i=1}^2\int_\Omega (|\nabla u_{i,n}|^2+V_i(x) u_{i,n}^2)\, dx\\
												&\leq \liminf_n (1+\eps_n)=1.
\end{align*}
Thus the convergence is strong, and $u_1,u_2$ are normalized eigenfunctions.
On the other hand, from Lemma \ref{lemma:bounded_quantities} we have that
$\omega_{i,n}\to \omega_i$, $\gamma_n\to \gamma$ for some constants $\omega_i,\gamma$. These must
satisfy (recall that $u_1,u_2$ are nontrivial)
\[
\lambda_{V_1}+\omega_1 = \gamma(\mu_1 u_1^2+\beta u_2^2),\qquad \lambda_{V_2}+\omega_2=\gamma(\mu_2u_2^2+\beta  u_1^2).
\]
As $u_1=u_2=0$ on $\partial \Omega$ if $\Omega$ is bounded, or $u_1,u_2$ satisfy \eqref{eq:decay_u} in case $\Omega=\R^ N$, we deduce that $\omega_i=-\lambda_{V_i}=\bar{\omega}_i$, $i=1,2$. In turn, by assumption
\eqref{eq:assumptions_nondegeneracyIntro}, $\gamma=0$.
\end{proof}
\begin{remark}
The lemma above is false for $\vtheta \in \Z\pi/2$. Indeed, for instance,
\[
\Phi\left(\frac{1}{\sqrt{\lambda_{V_1}}}\vphi_{V_1}  , 0 , -\lambda_{V_1}, \omega_2 ,0\right)
= (0,0,0,0,0)
\]
for every $\omega_2$ (and not only for $\omega_2 = -\lambda_{V_2}$).
\end{remark}
A direct computation shows that the partial derivative of $\Phi$ with respect
to the variables $x:=(u_1,u_2,\omega_1,\omega_2,\gamma)$,
\[
\Phi_{x}(x,\vtheta) :\Hcal \times \R^3\to \Hcal^*  \times \R^3,
\]
computed at $h=(v_1,v_2,o_1,o_2,g)$, yields, for $i=1,2$ and $j\neq i$:
\[
\begin{split}
(\Phi_{x}(x,\vtheta)[h])_i &= \Delta v_i-(V_i(x)+\omega_i)v_i-o_iu_i+g(\mu_i u_i^3+\beta u_iu_j^2)\\
&\qquad\qquad +\gamma(3\mu_i u_i^2 v_i+\beta u_j^2 v_i+2\beta u_1u_2 v_j),\\
(\Phi_{x}(x,\vtheta)[h])_{i+2} &=2\int_\Omega u_i v_i\, dx\\
(\Phi_{x}(x,\vtheta)[h])_{5} &= 2\sum_{i=1}^2 \int_\Omega(\nabla u_i\cdot \nabla v_i+ V_i(x) u_i v_i )\, dx.
\end{split}
\]
%\[
%\left(
%\begin{array}{c}
%  \Delta v_1-(V_1(x)+\omega_1)v_1-o_1u_1+g(\mu_1 u_1^3+\beta u_1u_2^2)+\gamma(3\mu_1 u_1^2 v_1+\beta v_1 u_2^2+2\beta u_1u_2 v_2)\\
%  \Delta v_2-(V_2(x)+\omega_2) v_2-o_2u_2+g(\mu_2 u_2^3+\beta u_1^2u_2) +\gamma(3\mu_2u_2^2v_2+\beta u_1^2v_2+2\beta u_1u_2v_1)\smallskip \\
%  2\int_\Omega u_1 v_1\, dx\\[0.1cm]
%  2\int_\Omega u_2 v_2\, dx\\[0.1cm]
%  2\sum_{i=1}^2 \int_\Omega(\nabla u_i\cdot \nabla v_i+ V_i(x) u_i\cdot v_i )\, dx
%\end{array}
%\right)
%\]
\begin{lemma}\label{lem:AP_ass_are_satisfied}
Given $\vtheta\in (0,\pi/2)$, denote
\[
\begin{split}
L_\vtheta &:= \Phi_{x}(\bar x(\vtheta),\vtheta).
\end{split}
\]
Then:

a) $\Ker L_\vtheta$ has dimension one, being spanned by the vector
\[
\vphi^*:=(\psi_1(\vtheta),\psi_2(\vtheta),o_1(\vtheta), o_2(\vtheta),1),
\]
where
\[
o_i(\vtheta) = \frac{1}{\bar\rho_i}
\int_\Omega (\mu_i \bar u_i^2 + \b \bar u_j^2)\bar u_i^2\, dx,
\]
and $\psi_i(\vtheta)$ is the unique solution of
\[
-\Delta\psi_i+V_i(x)\psi_i-\lambda_{V_i}\psi_i = \mu_i \bar u_i^3 + \b  \bar u_i \bar u_j^2
 - o_i \bar u_i \quad
\text{ with }\int_\Omega \psi_i \bar u_i\, dx=0;
\]
%\begin{multline*}
%(-\Delta+V_1(x))\psi_1-\lambda_{V_1}\psi_1 = \displaystyle \mu_1 \frac{\cos^3 \vtheta}{(\lambda_{V_1})^{3/2}}\vphi_{V_1}^3+\beta \frac{\cos \vtheta \sin^2 \vtheta}{\sqrt{\lambda_{V_1}}\lambda_{V_2}}\vphi_{V_1}\vphi_{V_2}^2\\
%				 -\vphi_{V_1} \int_\Omega \left(\mu_1 \frac{\cos^3 \vtheta}{(\lambda_{V_1})^{3/2}}\vphi_{V_1}^4+\beta \frac{\cos \vtheta \sin^2 \vtheta}{\sqrt{\lambda_{V_1}}\lambda_{V_2}}\vphi_{V_1}^2\vphi_{V_2}^2\right)\, dx
%\end{multline*}
%\[
%\text{with} \qquad \int_\Omega \psi_1\vphi_{V_1}\, dx= \int_\Omega \psi_2\vphi_{V_2}\, dx=0,
%\]
%while
%\begin{align*}
%o_1(\vtheta)&=\int_\Omega \left(\mu_1\frac{\cos^2 \vtheta}{\lambda_{V_1}}\vphi_{V_1}^4+\beta \frac{\sin^2 \vtheta}{\lambda_{V_2}}\vphi_{V_1}^2 \vphi_{V_2}^2\right)\, dx,\\
%o_2(\vtheta)&=\int_\Omega \left(\mu_2\frac{\sin^2 \vtheta}{\lambda_{V_2}}\vphi_{V_2}^4+\beta \frac{\cos^2 \vtheta}{\lambda_{V_1}}\vphi_{V_1}^2 \vphi_{V_2}^2\right)\, dx
%\end{align*}

b) $\Rk L_\vtheta=\Ker \Psi$, where $\Psi:\Hcal^* \times \Hcal^* \times \R^3\to \R$ is defined by
\[
\Psi(\xi_1,\xi_2,h_1,h_2,k)=k-(\lambda_{V_1} h_1+\lambda_{V_2}h_2);
\]

c) $\Psi \left(\Phi_{xx}(\bar x(\vtheta),\vtheta)[\vphi^*,\vphi^*]\right)>0$.
\end{lemma}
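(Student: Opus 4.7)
The plan is to exploit the fact that $\bar\gamma(\vtheta) = 0$ and $\bar\omega_i(\vtheta) = -\lambda_{V_i}$: the linearization $L_\vtheta = \Phi_x(\bar x(\vtheta),\vtheta)$ simplifies drastically, and the equation $L_\vtheta h = 0$ for $h = (v_1,v_2,o_1,o_2,g)$ reduces to the coupled system
\[
\begin{cases}
(-\Delta + V_i(x) - \lambda_{V_i})v_i = -o_i \bar u_i + g\bigl(\mu_i \bar u_i^3 + \beta \bar u_i \bar u_j^2\bigr), & i=1,2\\
\int_\Omega v_i \bar u_i\,dx = 0, & i=1,2\\
\sum_{i=1}^2 \int_\Omega \bigl(\nabla \bar u_i \cdot \nabla v_i + V_i(x)\bar u_i v_i\bigr)\,dx = 0.
\end{cases}
\]
Testing the eigenvalue equation $(-\Delta + V_i)\bar u_i = \lambda_{V_i}\bar u_i$ against $v_i$ shows that the last scalar equation is automatic once the orthogonality conditions hold.

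For part (a) I would invoke the Fredholm alternative for the self-adjoint operator $-\Delta + V_i - \lambda_{V_i}$ on $H^1_0(\Omega)\cap\{\vphi_{V_i}\}^\perp$. The compatibility condition $\int_\Omega [-o_i \bar u_i + g(\mu_i \bar u_i^3 + \beta \bar u_i \bar u_j^2)]\bar u_i\,dx = 0$ gives exactly the formula for $o_i(\vtheta)$ claimed in the statement, and once $g$ (and hence $o_i$) is fixed the PDE has a unique solution in $\{\bar u_i\}^\perp$, proportional to $\psi_i(\vtheta)$. The kernel is therefore one-dimensional and, setting $g=1$, coincides with $\spann\{\vphi^*\}$.

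For part (b) I would repeat the argument with nonzero right-hand side $(\xi_1,\xi_2,h_1,h_2,k)$. The scalar equations in positions $3,4$ yield $\int v_i \bar u_i\,dx = h_i/2$, while the fifth, after integrating by parts using the eigenvalue equation, gives $\sum_i \lambda_{V_i}\int v_i \bar u_i\,dx = k/2$; combining these produces the necessary compatibility $k = \lambda_{V_1} h_1 + \lambda_{V_2} h_2$, i.e.\ $\Psi(\xi_1,\xi_2,h_1,h_2,k)=0$. Conversely, when this holds I would pick $g$ arbitrarily, solve for $o_i$ via the inhomogeneous Fredholm condition, find $v_i$ modulo $\spann\{\bar u_i\}$, and use that residual freedom to impose $\int v_i \bar u_i\,dx = h_i/2$; hence $\Rk L_\vtheta = \Ker \Psi$.

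Part (c) is the most delicate step, and the main obstacle. A direct Taylor expansion of $\Phi$ at $\bar x(\vtheta)$ gives the scalar components of $\Phi_{xx}[\vphi^*,\vphi^*]$ as $2\int \psi_i^2\,dx$ in positions $3,4$ and $2\sum_i \int(|\nabla\psi_i|^2 + V_i\psi_i^2)\,dx$ in position $5$, so that
\[
\Psi\bigl(\Phi_{xx}(\bar x(\vtheta),\vtheta)[\vphi^*,\vphi^*]\bigr) = 2\sum_{i=1}^2\int_\Omega\bigl(|\nabla\psi_i|^2 + V_i(x)\psi_i^2 - \lambda_{V_i}\psi_i^2\bigr)\,dx.
\]
Since each $\psi_i$ is $L^2$-orthogonal to the simple principal eigenfunction $\vphi_{V_i}$, every summand is nonnegative, with equality iff $\psi_i \equiv 0$. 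The hard part is excluding the degenerate case $\psi_1=\psi_2=0$: this would force $(\mu_i\bar u_i^2 + \beta\bar u_j^2 - o_i)\bar u_i = 0$ a.e., and since $\bar u_i>0$ in the interior of $\Omega$, $\mu_i \bar u_i^2 + \beta \bar u_j^2 = o_i$ a.e.\ for $i=1,2$. I would rule this out by reproducing verbatim the approximate-continuity argument at $\partial\Omega$ (bounded case) or at infinity through \eqref{eq:decay_u} (case $\Omega=\R^N$) from the proof of Lemma \ref{lemma:maximizer_solves_sys} to get $o_1=o_2=0$, then integrate the pointwise identities to obtain $\mu_i \bar\rho_i + \beta \bar\rho_j = 0$ for both $i$; a quick inspection on signs shows this forces $\beta = \pm\sqrt{\mu_1\mu_2}$ with the sign prohibited by \eqref{eq:assumptions_nondegeneracyIntro}, contradicting the assumption. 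Therefore the sum is strictly positive, which proves (c).
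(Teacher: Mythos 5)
Your proposal is correct and follows essentially the same route as the paper for all three parts: exploiting $\bar\gamma(\vtheta)=0$ and $\bar\omega_i(\vtheta)=-\lambda_{V_i}$ to decouple the linearization, the Fredholm alternative with the compatibility condition determining $o_i$ for (a) and the explicit preimage for (b), and the spectral-gap inequality $\int(|\nabla\psi_i|^2+V_i\psi_i^2)\,dx\geq\lambda_{V_i}\int\psi_i^2\,dx$ on $\{\vphi_{V_i}\}^\perp$ for (c). The one point where you go beyond the paper is worth keeping: the paper closes (c) by simply asserting that each $\psi_i\not\equiv0$, whereas in fact an individual $\psi_i$ \emph{can} vanish under \eqref{eq:assumptions_nondegeneracyIntro} (e.g.\ $V_1=V_2$ and $\mu_1\bar\rho_1+\beta\bar\rho_2=0$ give $\psi_1\equiv0$), and only the simultaneous vanishing $\psi_1=\psi_2\equiv0$ needs to be, and can be, excluded; your reduction of that case to $\mu_i\bar\rho_i+\beta\bar\rho_j=0$ for both $i$, hence $\beta^2=\mu_1\mu_2$ with the sign forbidden by \eqref{eq:assumptions_nondegeneracyIntro}, is exactly the right argument (and the boundary/decay step showing $o_i=0$ is even slightly easier here than in Lemma \ref{lemma:maximizer_solves_sys}, since $\bar u_i$ is a multiple of the continuous principal eigenfunction).
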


\begin{proof}
%We have
%\[
%L_\vtheta[v_1,v_2,o_1,o_2,\gamma]
%=\left(
%\begin{array}{c}
%  \Delta v_1-(V_1(x)-\lambda_{V_1})v_1-o_1\sqrt{\bar\rho_1}\vphi_{V_1}+g\zeta_1(\vtheta)\\
%  \Delta v_2-(V_2(x)-\lambda_{V_2}) v_2-o_2\sqrt{\bar\rho_2}\vphi_{V_2}+g\zeta_2(\vtheta)\smallskip \\
%  2\sqrt{\bar\rho_1} \int_\Omega \vphi_{V_1} v_1\, dx\\[0.1cm]
%  2\sqrt{\bar\rho_2} \int_\Omega \vphi_{V_2} v_2\, dx\\[0.1cm]
%  2\sum_{i=1}^2\sqrt{\bar\rho_i}\int_\Omega(\nabla \vphi_{V_i}\cdot \nabla v_i+ V_i(x) \vphi_{V_i} v_i )\, dx
%\end{array}
%\right).
%\]
%
%\medbreak
%
\emph{a)} Let for the moment $\zeta_i(\vtheta) = \mu_i \bar u_i^3(\vtheta) + \b  \bar u_i(\vtheta) \bar u_j^2(\vtheta)$, $i=1,2,\ j\neq i$.
The equation $L_\vtheta[v_1,v_2,o_1,o_2,g]=(0,0,0,0,0)$ is equivalent to ($i=1,2$,
$j\neq i$):
\[
\begin{split}
 &-\Delta v_i+V_i(x)v_i -\lambda_{V_i}v_i  =-o_i \bar{u}_i +g\zeta_i,\\
& \int_\Omega \bar u_i v_i\, dx= \int_\Omega (\nabla \bar u_i\cdot \nabla v_i+V_i(x)\bar u_i v_i)\, dx=0.
\end{split}
\]
Testing the $i$--th equation by $\bar u_i$, one obtains each $o_i$ in function of $g$:
\[
o_i=\frac{g}{\bar \rho_i}\int_\Omega \zeta_i \bar u_i\, dx = g o_i(\vtheta).
\]
Therefore one has to solve
\[
-\Delta\psi_i +V_i(x)\psi_i-\lambda_{V_i}\psi_i = g \left[\zeta_i  -
\frac{\bar u_i}{\bar\rho_i}\int_\Omega \zeta_i \bar u_i\, dx\right]\quad
\text{ with }\int_\Omega \psi_i \bar u_i\, dx=0,
\]
%\begin{multline*}
% ( -\Delta+V_1(x))v_1 -\lambda_{V_1}v_1=g \left(\mu_1 \rho_1^{3/2}\vphi_{V_1}^2
% +\beta \sqrt{\rho_1} \rho_2\vphi_{V_1}\vphi_{V_2}^2  \right.\\
% 		\left. -\vphi_{V_1}\int_\Omega(\mu_1 \rho_1^{3/2}\vphi_{V_1}^4
% +\beta \sqrt{\rho_1} \rho_2\vphi_{V_1}^2\vphi_{V_2}^2)\, dx\right)
%\end{multline*}
%and
%\begin{multline*}
% ( -\Delta+V_2(x))v_2 -\lambda_{V_2}v_2=g \left(\mu_2 \rho_2^{3/2}\vphi_{V_2}^2
% +\beta\rho_1 \sqrt{\rho_2} \vphi_{V_2}\vphi_{V_1}^2  \right.\\
% 		\left. -\vphi_{V_2}\int_\Omega(\mu_2 \rho_2^{3/2}\vphi_{V_2}^4
% +\beta \rho_1\sqrt{\rho_2} \vphi_{V_1}^2\vphi_{V_2}^2)\, dx\right).
%\end{multline*}
and this can be uniquely done by choosing $v_i=g\psi_i(\vtheta)$, by Fredholm's Alternative.

\medbreak

\emph{b)} Take $(\xi_1,\xi_2,h_1,h_2,k)=L_\vtheta(v_1,v_2,o_1,o_2,g)\in \Rk L_\vtheta$. Then from the last three equations of this identity, and the fact that $\bar{u}_i$ are eigenfunctions, we deduce that
\begin{align*}
k&=2\sum_{i=1}^2 \int_\Omega(\nabla \bar{u}_i \cdot \nabla v_i + V_i(x)\bar{u}_i v_i)\, dx\\
  &=2\sum_{i=1}^2 \lambda_{V_i} \int_\Omega \bar{u}_i v_i \, dx =\sum_{i=1}^2 \lambda_{V_i} h_i.
\end{align*}
which shows that $\Rk L_\vtheta\subset \Ker \Psi$. Reciprocally, given $(\xi_1,\xi_2,h_1,h_2,k)\in
\Hcal^*\times\R^3$ with $k=\lambda_{V_1}h_1+\lambda_{V_2}h_2$, let $w_i$ (for $i=1,2$) be
the solution to
\[
-\Delta w_i +V_i(x)w_i-\lambda_{V_i}w_i=\vphi_{V_i}\langle \xi_i, \vphi_{V_i} \rangle - \xi_i,\qquad \int_\Omega w_i \vphi_{V_i}\, dx=0,
\]
which exists, unique, by Fredholm's Alternative. Then
\begin{multline*}
L_\vtheta\left[\frac{h_1}{2\sqrt{\bar\rho_1}}\vphi_{V_1}+w_1,\frac{h_2}{2\sqrt{\bar\rho_2}}
\vphi_{V_2}+w_2,\frac{1}{\sqrt{\bar\rho_1}}\langle \xi_1, \vphi_{V_1} \rangle,
\frac{1}{\sqrt{\bar\rho_2}}\langle \xi_2, \vphi_{V_2} \rangle,0\right]\\
=(\xi_1,\xi_2,h_1,h_2,k).
\end{multline*}

\medbreak

\emph{c)} One can check directly that
\[
\Phi_{xx}(\bar x(\vtheta),\vtheta)[\psi_1(\vtheta),\psi_2(\vtheta),o_1(\vtheta), o_2(\vtheta),1]^2
\]
is given by
\[
\left(\begin{array}{c}
-2o_1 \psi_1 +2 (3\mu_1 \bar u_{1}^2 \psi_1+\beta \bar u_{2}^2 \psi_1+2\beta
\bar u_{1}\bar u_{2}\psi_2) \\
-2o_2 \psi_2 +2 (3\mu_2 \bar u_{2}^2 \psi_2+\beta \bar u_{1}^2 \psi_2+2\beta
\bar u_{1}\bar u_{2}\psi_1)\\
2\int_\Omega \psi_1^2\, dx\\
2\int_\Omega \psi_2^2\, dx\\
2\sum_{i=1}^2 \int_\Omega (|\nabla \psi_i|^2+V_i(x)\psi_i^2)\, dx
\end{array}
\right).
\]
Its image through $\Psi$ is
\[
2\sum_{i=1}^2 \left(\int_\Omega (|\nabla \psi_i|^2+V_i(x)\psi_i^2)\, dx- \lambda_{V_i} \int_\Omega \psi_i^2\, dx\right),
\]
which is strictly positive since $\psi_i \not\equiv 0$ and $\int_\Omega \psi_i \vphi_{V_i}\, dx=0$.
\end{proof}

Now we are in position to apply Theorem \ref{thm:AmbrosettiProdi_parametric}.

\begin{lemma}\label{lem:small_mass}
For every $\vtheta^*\in(0,\pi/2)$ there exist $\bar\delta,\bar\varepsilon$ such that for every
$\vtheta\in(\vtheta^*-\bar\delta,\vtheta^*+\bar\delta)$ the problem
\begin{equation*}
\begin{cases}
\Phi(u_1,u_2,\omega_1,\omega_2,\gamma,\vtheta)=(0,0,0,0,\eps),\\
(u_1,u_2,\omega_1,\omega_2,\gamma)\in X
\end{cases}
\end{equation*}
has exactly two positive solutions $x_\pm=x_\pm(\eps,\vtheta)$ for each $0<\eps\leq\bar\eps$,
and no solution for $\eps<0$.  Moreover, $\gamma_-<0<\gamma_+$,
\[
(u_{1+}(\eps,\vtheta),u_{2+}(\eps,\vtheta))\text{ achieves } M \left(1+\ep,\bar\rho_1(\vtheta),
\bar\rho_1(\vtheta)\right)
\]
(uniquely among positive solutions) and
\[
\frac{\partial\gamma_+(\eps,\vtheta)}{\partial\eps} \geq C>0 \qquad
\text{for every }(\eps,\vtheta)\in(0,\bar\ep]\times(\vtheta^*-\bar\delta,\vtheta^*+\bar\delta).
\]
\end{lemma}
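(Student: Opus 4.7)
The plan is to apply Theorem \ref{thm:AmbrosettiProdi_parametric} with $X=\Hcal\times\R^3$, $Y=\Hcal^*\times\R^3$, the map $\Phi$ of \eqref{eq:definition_of_Phi}, the curve $\bar x(\vtheta)$, the base point $x^*=\bar x(\vtheta^*)$, and the direction $z=(0,0,0,0,1)$, so that $\langle\Psi,z\rangle=1$ by the formula for $\Psi$ in Lemma \ref{lem:AP_ass_are_satisfied}(b). To match assumption \eqref{ass:critpointcurve}, I choose $U\subset X$ to be a small neighborhood of $x^*$ whose projection onto the first two coordinates is the open set $\tilde U$ of Lemma \ref{lem:ass1AP} (so the reflected configurations with $(l,m)\neq(0,0)$ are excluded), and take $I$ to be a small open interval around $\vtheta^*$ contained in $(0,\pi/2)$. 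Lemma \ref{lem:ass1AP} then guarantees that the only solutions of $\Phi(\cdot,\vtheta)=0$ in $U$ are $\bar x(\vtheta)$, giving \eqref{ass:critpointcurve}, while Lemma \ref{lem:AP_ass_are_satisfied}(a)(b)(c) provides \eqref{ass:kernel}, \eqref{ass:rank} and \eqref{ass:nondeg} (with $\Psi$ independent of $\vtheta$ as required). Theorem \ref{thm:AmbrosettiProdi_parametric} then produces $\bar\delta,\bar\ep>0$ and two $C^2$ branches $x_\pm(\ep,\vtheta)$, satisfying
\[
x_\pm(\ep,\vtheta)=\bar x(\vtheta)+\bigl[\vphi^*+\eta_\pm(\ep,\vtheta)\bigr]\,t_\pm(\ep,\vtheta),
\qquad \pm t_\pm>0,\qquad \pm\partial_\ep t_\pm\geq C/\sqrt\ep,
\]
as the unique solutions of $\Phi(\cdot,\vtheta)=\ep z$ inside $B_{\bar\delta}(x^*)$, with no solutions at all for $\ep<0$.

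Reading off the fifth component of the expansion and using $\bar\gamma(\vtheta)=0$ together with the fact that the fifth entry of $\vphi^*$ is $1$ (Lemma \ref{lem:AP_ass_are_satisfied}(a)), we find $\gamma_\pm(\ep,\vtheta)=(1+\eta_\pm^{(5)}(\ep,\vtheta))\,t_\pm(\ep,\vtheta)$. Shrinking $\bar\ep$ and $\bar\delta$ so that $|\eta_\pm^{(5)}|<1/2$, this yields $\gamma_-<0<\gamma_+$. For positivity of $u_i(x_\pm)$, I argue by elliptic regularity: since $x_\pm\to\bar x(\vtheta)$ uniformly in $\vtheta$ in the $\Hcal$-norm, and $\omega_i(x_\pm)$, $\gamma(x_\pm)$ stay bounded, a standard bootstrap on the elliptic system \eqref{eq:system_main} produces $C^1_{\mathrm{loc}}$ (respectively $C^0(\overline\Omega)$ for bounded $\Omega$) convergence of the components to $\bar u_i$, which is strictly positive in the interior of $\Omega$; together with the strong maximum principle applied to the linear equation satisfied by $u_i(x_\pm)$ and the boundary (or decay) behavior of $\bar u_i$, this forces $u_i(x_\pm)>0$ for $\bar\ep,\bar\delta$ small.

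To identify $x_+$ with the maximizer, observe that $1+\ep>1=\lambda_{V_1}\bar\rho_1(\vtheta)+\lambda_{V_2}\bar\rho_2(\vtheta)$, so Theorem \ref{thm:intro1} supplies a positive maximizer of $M(1+\ep,\bar\rho_1(\vtheta),\bar\rho_2(\vtheta))$ whose associated tuple $(u_1,u_2,\omega_1,\omega_2,\gamma)$ solves $\Phi=\ep z$ with $\gamma>0$ (Lemmas \ref{lemma:maximizer_solves_sys} and \ref{lemma:gamma>0}, via Remark \ref{rem:M_iff_Phi}). Arguing as in Lemma \ref{lem:ass1AP}, but along a sequence $\ep_n\to 0^+$, $\vtheta_n\to\vtheta^*$ of such positive maximizers, Corollary \ref{coro:strong_conv} combined with Remark \ref{rem:trivial_constraint} forces strong $\Hcal$-convergence of $(u_{1,n},u_{2,n})$ to the unique positive element of $\tilde{\Ucal}(1,\bar\rho_1(\vtheta^*),\bar\rho_2(\vtheta^*))$, namely $(\bar u_1(\vtheta^*),\bar u_2(\vtheta^*))$; Lemma \ref{lemma:bounded_quantities}, followed by passing to the limit in the equation and using \eqref{eq:assumptions_nondegeneracyIntro}, then forces $\omega_{i,n}\to-\lambda_{V_i}$ and $\gamma_n\to 0$. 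Hence every positive maximizer lies in $B_{\bar\delta}(x^*)$ for $\bar\ep,\bar\delta$ small, so it equals $x_+$ or $x_-$; since $\gamma>0$ for maximizers whereas $\gamma_-<0$, it must be $x_+$. The same compactness argument shows that every positive solution of $\Phi=\ep z$ (with $\ep>0$ small) is one of $x_\pm$, which gives uniqueness among positive solutions and the absence of positive solutions for $\ep<0$.

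Finally, for the monotonicity bound, differentiate $\gamma_+=(1+\eta_+^{(5)})t_+$ in $\ep$ to obtain
\[
\partial_\ep\gamma_+=(1+\eta_+^{(5)})\,\partial_\ep t_+ + t_+\,\partial_\ep\eta_+^{(5)}.
\]
The first term is at least $\tfrac12\cdot C/\sqrt\ep\geq C/(2\sqrt{\bar\ep})$, while the second is bounded since $t_+=O(\sqrt\ep)$ and $\eta_+^{(5)}\in C^1([0,\bar\ep]\times(\vtheta^*-\bar\delta,\vtheta^*+\bar\delta))$. Possibly shrinking $\bar\ep$ once more, this yields $\partial_\ep\gamma_+\geq C>0$ uniformly. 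The step I expect to be most delicate is the positivity claim for the $x_-$ branch, for which there is no direct variational characterization: the argument relies on turning $\Hcal$-closeness into sufficient pointwise closeness to $\bar u_i$ via elliptic regularity, which must be set up with some care near the Lipschitz boundary of $\Omega$ (and, in the case $\Omega=\R^N$, using the decay \eqref{eq:decay_u}).
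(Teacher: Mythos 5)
Your argument coincides with the paper's: both apply Theorem \ref{thm:AmbrosettiProdi_parametric} with $z=(0,0,0,0,1)$ and the hypotheses supplied by Lemmas \ref{lem:ass1AP} and \ref{lem:AP_ass_are_satisfied}, use Lemma \ref{lem:ass1AP} once more to pass from local to global uniqueness among positive solutions, read $\gamma_\pm=\bigl(1+\tilde\eta_\pm\bigr)t_\pm$ off the fifth component of \eqref{eq:ambrosetti_prodi_expansion} to get $\gamma_-<0<\gamma_+$ and thereby identify the maximizer with $x_+$, and differentiate that same formula for the uniform lower bound on $\partial_\ep\gamma_+$. The only point where you add material is the pointwise positivity of the components of $x_-$, which the paper leaves implicit; be aware that your strong-maximum-principle step there is circular as written (it presupposes the sign of $u_i(x_-)$), so that part would require convergence to $\bar u_i$ in a norm strong enough to control the ratio $u_i/\vphi_{V_i}$ up to the boundary, not merely interior $C^0$ closeness.
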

\begin{proof}
In view of Lemmas \ref{lem:ass1AP} and \ref{lem:AP_ass_are_satisfied}, most part of the
statement is a direct consequence of Theorem \ref{thm:AmbrosettiProdi_parametric}. Indeed,
under the above notation, by choosing $z=(0,0,0,0,1)$, we have $\Psi(z)=1$ and thus the
existence of $\bar\delta,\bar\varepsilon$ and $x_\pm$. One can use again Lemma \ref{lem:ass1AP}
to insure that $x_\pm$ are the only two solutions not only locally, but also among all positive
solutions. Now, the last component of equation \eqref{eq:ambrosetti_prodi_expansion} writes
\[
\gamma_\pm(\eps,\vtheta) = \left[ 1 + \tilde\eta_\pm(\ep,\vtheta)\right]t_\pm (\eps,\vtheta),
\]
where the functions $t_\pm$ satisfy
\[
\pm t_\pm(\eps,\vtheta) >0, \quad  t_\pm(0^+,\vtheta) =0,
\quad \pm\frac{\partial t_\pm(\ep,\vtheta)}{\partial\ep} \geq \frac{C}{\sqrt\ep},
\qquad\text{ for }\eps\in(0,\bar\ep),
\]
while the functions $\tilde\eta_\pm$ are $C^1$ up to $\ep=0$, with $\tilde\eta_\pm(0,\vtheta^*)=0$.
In particular, by taking possibly smaller values of $\bar\delta,\bar\ep$, we can assume that
\[
|\tilde\eta_\pm(\ep,\vtheta)|\leq\frac12,
\qquad |\partial_\ep\tilde\eta_+(\ep,\vtheta)t_+(\ep,\vtheta)|\geq\frac{C}{3\sqrt{\bar\ep}}.
\]
This is sufficient to insure that $\gamma_+>0$ and $\gamma_-<0$ so that only $(u_{1+},u_{2+})$ achieves $M$ (Lemmas \ref{lemma:maximizer_solves_sys}, \ref{lemma:gamma>0} and Remark
\ref{rem:M_iff_Phi}). Furthermore, this also implies that
\[
\frac{\partial\gamma_+}{\partial\eps} = [1+\tilde\eta_+]\frac{\partial t_+}{\partial\eps}
+ \frac{\partial \tilde\eta_+}{\partial\eps}t_+ \geq \frac12 \frac{C}{\sqrt\ep} -
\frac13 \frac{C}{\sqrt{\bar\ep}}>0
\qquad\text{ for }\eps\in(0,\bar\ep). \qedhere
\]
\end{proof}
\begin{proof}[Proof of Theorem \ref{thm:intro3}]
As usual, recall that pairs $(u_1,u_2)$ achieving $M(\a,\rho_1,\rho_2)$ correspond to
pairs $(U_1,U_2)=\sqrt{\gamma}(u_1,u_2)$ which solve system \eqref{eq:intro_sys_solitary}
with $m_i=\gamma\rho_i$.
Let $k\geq1$ be fixed. Choosing $\rho_i=\bar\rho_i(\vtheta)$, we have that
\[
\frac{1}{k}\leq \frac{m_2}{m_1} = \frac{\rho_2}{\rho_1}\leq{k}
\quad\iff\quad
\vtheta_-:=\arctan{\sqrt{\frac{\lambda_{V_2}}{k\lambda_{V_1}}}}\leq\vtheta\leq
\arctan{\sqrt{\frac{k\lambda_{V_2}}{\lambda_{V_1}}}}=:\vtheta_+.
\]
Now, since $[\vtheta_-,\vtheta_+]\subset(0,\pi/2)$, for every $\vtheta^*\in
[\vtheta_-,\vtheta_+]$ we can apply Lemma \ref{lem:small_mass}. By
compactness, we end up with a uniform $\bar\ep>0$ such that, writing $\a=1+\ep$ and
\[
x_+(\a-1,\vtheta) = (u_1(\a,\vtheta),u_2(\a,\vtheta),\omega_1(\a,\vtheta),
\omega_2(\a,\vtheta),\gamma(\a,\vtheta)),
\]
we have that $(u_1(\a,\vtheta),u_2(\a,\vtheta))$ achieves $M(\a,\bar\rho_1(\vtheta),\bar\rho_2(\vtheta))$,
uniquely among positive pairs, for every $\a\in(1,1+\bar\ep]$,
$\vtheta\in[\vtheta_-,\vtheta_+]$. As a consequence, Theorem \ref{thm:intro1}
provides the existence of the corresponding solitary waves. Furthermore,
$x^+$ is $C^1$ and
\[
\frac{\partial \gamma_+(\a,\vtheta)}{\partial \a}>0,
\qquad
\text{for every }\a\in(1,1+\bar\ep],\ \vtheta\in[\vtheta_-,\vtheta_+].
\]
Applying Theorem \ref{thm:intro2}, for $\vtheta$ fixed, by uniqueness we obtain that
the solitary waves are stable. Recalling that $\gamma(1^+,\vtheta)\equiv0$,
the theorem follows by choosing
\[
\bar m := \min_{\vtheta\in[\vtheta_-,\vtheta_+]}
\left[\bar\rho_1(\vtheta)+\bar\rho_2(\vtheta)\right]
\sqrt{\gamma(\bar\ep,\vtheta)}>0.
\qedhere
\]
\end{proof}

\subsection{Defocusing system with weak interaction}\label{sec:defocusing}

The purpose of this section is to prove the following.

\begin{theorem}\label{thm:defocusing_weak_interaction}
Let $\m_1,\mu_2<0$ and $\b^2<\m_1\m_2$. Let $V_1,V_2$ satisfy \eqref{eq:V_assumptionT}.

For every $\rho_1,\rho_2>0$ the set
\[
\mathcal{S}=\left\{ (u_1,u_2,\omega_1,\omega_2,\gamma,\alpha) \in \mathcal{H}\times\R^4: \, \begin{array}{c}
\gamma>0,\ \alpha>\lambda_{V_1}\rho_1+\lambda_{V_2}\rho_2, \\
\|(u_1,u_2)\|_{\mathcal{H}}^2 =\a, \ \int_\Omega u_i^2\,dx=\rho_i, \\
u_i>0, \  \text{ system \eqref{eq:system_main} holds}
\end{array} \right\}
\]
is a smooth curve which can be parameterized by a unique map
\[
\alpha \mapsto (u_1(\alpha),u_2(\alpha),\omega_1(\alpha),\omega_2(\alpha),\gamma(\alpha)),
\]
so that $(u_1(\a),u_2(\a))$ achieves $M(\a,\rho_1,\rho_2)$.
Moreover, if $(u_1,u_2,\omega_1,\omega_2,\gamma,\cdot)\in\mathcal{S}$, then $\gamma$ is increasing
to $+\infty$. As a consequence,
the standing wave $(e^{\icomp t \omega_1}\sqrt{\gamma}u_1, e^{\icomp t \omega_2}\sqrt{\gamma}u_2)$
is the only positive solution to \eqref{eq:intro_sys_solitary} corresponding to $m_i=\gamma\rho_i$.
Furthermore, it is conditionally orbitally stable for \eqref{eq:intro_sys}, in the sense of
Theorem \ref{thm:intro2}.
\end{theorem}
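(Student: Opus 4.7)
My plan is to exploit the structural convexity specific to the defocusing weakly-coupled regime $\mu_1,\mu_2<0$, $\beta^2<\mu_1\mu_2$: in the variables $a_i:=u_i^2$ the problem becomes a strictly convex program, and the value function $\alpha\mapsto M(\alpha,\rho_1,\rho_2)$ is strictly concave, which via the identity $M'(\alpha)=1/(2\gamma(\alpha))$ translates into strict monotonicity of $\gamma$. Existence of the maximizer comes from Theorem \ref{thm:intro1}, smoothness of the curve from the Implicit Function Theorem, and stability from Theorem \ref{thm:intro2}.

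For uniqueness of the positive maximizer of $M(\alpha,\rho_1,\rho_2)$, I would take two candidate maximizers $(u_1,u_2)$ and $(\tilde u_1,\tilde u_2)$ and consider the square-mean $w_i:=\sqrt{(u_i^2+\tilde u_i^2)/2}$. The pointwise Cauchy--Schwarz bound $(u_i\nabla u_i+\tilde u_i\nabla \tilde u_i)^2\leq(u_i^2+\tilde u_i^2)(|\nabla u_i|^2+|\nabla\tilde u_i|^2)$ yields $|\nabla w_i|^2\leq(|\nabla u_i|^2+|\nabla\tilde u_i|^2)/2$, so $(w_1,w_2)$ is admissible, and a direct polynomial computation gives
\begin{multline*}
F(w_1,w_2)-\frac{F(u_1,u_2)+F(\tilde u_1,\tilde u_2)}{2} \\
=\frac{1}{16}\int_\Omega\left(|\mu_1|(u_1^2-\tilde u_1^2)^2-2\beta(u_1^2-\tilde u_1^2)(u_2^2-\tilde u_2^2)+|\mu_2|(u_2^2-\tilde u_2^2)^2\right)dx.
\end{multline*}
The integrand is a positive definite quadratic form in $(u_1^2-\tilde u_1^2,u_2^2-\tilde u_2^2)$ precisely because $\mu_1\mu_2-\beta^2>0$, so $F(w_1,w_2)>M(\alpha,\rho_1,\rho_2)$ unless $u_i\equiv\tilde u_i$.

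Next, I would rewrite the optimization in $(a_1,a_2)$: the constraint $\|(u_1,u_2)\|_\Hcal^2\leq\alpha$ becomes $\sum_i\int(|\nabla a_i|^2/(4a_i)+V_ia_i)\,dx\leq\alpha$, jointly convex in $(a_1,a_2)$ by the convexity of Fisher information; $\int a_i=\rho_i$ is affine; and $-F$ becomes $\tfrac14\int(|\mu_1|a_1^2-2\beta a_1a_2+|\mu_2|a_2^2)\,dx$, strictly convex under our assumption. Thus $-M(\alpha,\rho_1,\rho_2)$ is the value of a strictly convex program parameterized by a convex constraint, hence strictly convex in $\alpha$. The identity $M'(\alpha)=1/(2\gamma(\alpha))$, derived as in Lemma \ref{lemma:auxSTAB2}, then gives strict monotonicity of $\gamma$. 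Moreover, since $F\leq 0$ in our regime, $M\leq 0$ is uniformly bounded above, so being strictly increasing it must satisfy $M'(\alpha)\to 0$, whence $\gamma(\alpha)\to+\infty$. The same convex reduction shows that every $(u_1,u_2,\omega_1,\omega_2,\gamma)$ in $\mathcal{S}$ is a KKT point of the convex program, hence equals the unique minimizer; this identifies $\mathcal{S}$ with the maximizer curve. Smoothness of $\alpha\mapsto(u_1(\alpha),\ldots,\gamma(\alpha))$ is then obtained via the Implicit Function Theorem applied to \eqref{eq:system_main} together with the three scalar constraints, non-degeneracy of the linearization being again guaranteed by the strict convexity of the reduction.

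Finally, any positive solution $(U_1,U_2,\omega_1,\omega_2)$ of \eqref{eq:intro_sys_solitary} with masses $m_i=\gamma\rho_i$ rescales via $u_i:=U_i/\sqrt\gamma$ into an element of $\mathcal{S}$, and by uniqueness must coincide with the constructed solitary wave. The hypotheses of Theorem \ref{thm:intro2} are satisfied throughout the curve, so conditional orbital stability follows. The main obstacle I anticipate is the rigorous transfer from strict convexity of the reduced $(a_1,a_2)$ program to strict concavity of the scalar value function $M$ in $\alpha$, since one must handle positivity of $a_i$, regularity of $\sqrt{a_i}$, and the active constraint in $\alpha$; should this prove technically delicate, I would fall back on a direct parameter-interpolation refinement of the square-mean construction to obtain strict midpoint-concavity of $M$ (and hence strict monotonicity of $\gamma$) without invoking the convex reduction.
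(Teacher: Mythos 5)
Your proposal is essentially correct, but it reaches the conclusion by a genuinely different route than the paper. The paper proves uniqueness of \emph{positive solutions of system \eqref{eq:system_main} for fixed $(\omega_1,\omega_2,\gamma)$} via the quotient substitution $w_i=u_i/v_i$ tested against $(w_i^2-1)/w_i$ (which on $\R^N$ forces the a priori $L^\infty$ and exponential-decay estimates to kill the boundary terms \eqref{eq:boundary_term_R_k}), then proves invertibility of the full linearization (Lemma \ref{lemma:defocusing_weak_nondegenerate}) and non-degeneracy of the action Hessian, and finally obtains $\gamma'(\alpha)>0$ from the identity $\mathcal{A}''_{\gamma,\omega_1,\omega_2}[(v_1,v_2)]^2=\gamma'/(2\gamma)$ combined with \eqref{eq:identity_1/2}. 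You instead exploit the hidden convexity in $a_i=u_i^2$ globally: the square-mean competitor gives uniqueness of the nonnegative maximizer directly (your parallelogram computation $F(w)-\tfrac12(F(u)+F(\tilde u))=\tfrac{1}{16}\int Q(a-\tilde a)$ with $Q$ positive definite is correct, and admissibility of $(w_1,w_2)$ follows from Cauchy--Schwarz exactly as you write); KKT sufficiency for the convex program identifies $\mathcal{S}$ with the maximizer curve; and strict convexity of the value function plus $M'(\alpha)=1/(2\gamma(\alpha))$ yields monotonicity of $\gamma$ and, via boundedness $M\leq 0$, the limit $\gamma\to+\infty$ (which the paper gets instead by direct minimization of $\mathcal{E}_\gamma$). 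The two arguments are two faces of the same structure --- the paper's inequality $\int\nabla u_i\cdot\nabla(v_i^2/u_i)\leq\int|\nabla v_i|^2$ in \eqref{eq:auxiliary_L_injective3} is precisely the infinitesimal form of the convexity of $a\mapsto\int|\nabla\sqrt a|^2$ you invoke --- but yours avoids the delicate decay estimates on $\R^N$ entirely (no integration by parts is needed in the midpoint computation), while the paper's yields the stronger fixed-multiplier uniqueness statement and an explicit proof of the non-degeneracy needed for the Implicit Function Theorem.

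Two steps in your write-up are asserted rather than proved and deserve attention. First, the claim that invertibility of the linearization of \eqref{eq:F_def} ``is again guaranteed by the strict convexity of the reduction'' is true but not automatic: one must still rule out kernel elements in the multiplier directions $(o_1,o_2,g)$ and show coercivity of the second variation on the full tangent space, which is exactly the content of the paper's Lemma \ref{lemma:defocusing_weak_nondegenerate}; some version of that computation (or a bordered-Hessian argument) cannot be skipped. Second, the KKT step for the Fisher-information constraint in infinite dimensions (subdifferentiability at $a_i=u_i^2$ with $u_i>0$ only in the interior, validity of the convexity inequality $g(a)\geq g(a^*)+\langle g'(a^*),a-a^*\rangle$ for all admissible $a\geq0$) is the genuinely delicate point you correctly flag; your fallback via direct midpoint concavity of $M$ is the safer way to close it, since it reuses only the square-mean construction you have already justified. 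With those two points written out, the argument is complete.
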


In the rest of the section, we assume that $\m_1,\mu_2<0$, $\b^2<\m_1\m_2$, and $V_1,V_2$
satisfy \eqref{eq:V_assumptionT}. In particular, \eqref{eq:M_alpha} rewrites as
\[
-M(\a,\rho_1,\rho_2)=\inf_{\tilde{\mathcal{U}}(\a,\rho_1,\rho_2)}
\int_\Om \left( \frac{|\m_1|}{4}u_1^4-\frac{\b}{2}u_1^2u_2^2 +\frac{|\m_2|}{4}u_2^4 \right)\,dx.
\]
Since $\beta^2<\mu_1\mu_2$, we are minimizing a functional which is positive and coercive. Moreover, a convexity property holds in the following sense:
\[
 |m_1|u_1^4-2 b u_1^2u_2^2 + |m_2| u_2^4 = G(u_1^2,u_2^2),\quad \text{with $G$ convex.}
\]
This is sufficient to ensure the following uniqueness result.

\begin{lemma}\label{lem:uniqueness_mu_i<0}
For fixed $\omega_1,\omega_2\in\R$ and $\gamma>0$ there is at most one positive solution
$(u_1,u_2)\in\mathcal{H}$ of system \eqref{eq:system_main}.
\end{lemma}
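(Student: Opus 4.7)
\smallskip
\noindent\emph{Plan of proof.} The plan is to exploit the convexity property highlighted just above the statement, implemented as a Picone/Diaz--Saa test on the system. Suppose $(u_1,u_2)$ and $(v_1,v_2)$ are two positive solutions of \eqref{eq:system_main} sharing the same $\omega_1,\omega_2,\gamma$. By elliptic regularity and the strong maximum principle, each of the four functions is of class $C^2_{\mathrm{loc}}$ and strictly positive in $\Omega$, with Hopf-type behaviour at $\partial\Omega$ (when $\Omega$ is bounded) or the decay forced by \eqref{eq:V_assumptionT} (when $\Omega=\R^N$). These properties make $(u_i^2-v_i^2)/u_i$ and $(v_i^2-u_i^2)/v_i$ admissible test functions in $H^1_0(\Omega)$.

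First I would test the $i$-th equation for $u_i$ by $(u_i^2-v_i^2)/u_i$ and the $i$-th equation for $v_i$ by $(v_i^2-u_i^2)/v_i$, then add the two identities. The $(V_i+\omega_i)$-terms cancel, and what remains is
\[
\int_\Omega \left(\frac{-\Delta u_i}{u_i}-\frac{-\Delta v_i}{v_i}\right)(u_i^2-v_i^2)\,dx
= \gamma\int_\Omega\bigl[\mu_i(u_i^2-v_i^2)+\beta(u_j^2-v_j^2)\bigr](u_i^2-v_i^2)\,dx.
\]
The left hand side is nonnegative by the Picone inequality in its Diaz--Saa / Benguria--Brezis--Lieb form, valid for strictly positive $H^1_0$ functions; equivalently, this is the strict convexity of $w\mapsto\int|\nabla\sqrt{w}|^2\,dx$ on positive densities, which is the single-equation version of the convexity observation quoted before the lemma.

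Summing over $i=1,2$ and writing $X_i:=u_i^2-v_i^2$, the right hand side equals
\[
\gamma\int_\Omega\bigl(\mu_1 X_1^2 + 2\beta X_1X_2 + \mu_2 X_2^2\bigr)\,dx
= -\gamma\int_\Omega Q(X_1,X_2)\,dx,
\]
where $Q(X,Y):=|\mu_1|X^2-2\beta XY+|\mu_2|Y^2$. Under $\beta^2<\mu_1\mu_2=|\mu_1||\mu_2|$ the discriminant $4(\beta^2-\mu_1\mu_2)$ is negative, so $Q$ is positive definite; combined with $\gamma>0$ and the nonnegativity of the left hand side, the identity forces $Q(X_1,X_2)\equiv 0$ a.e., hence $X_1\equiv X_2\equiv 0$, and finally $u_i=v_i$ by positivity.

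The main obstacle is technical rather than conceptual: one must verify that the candidate test functions $(u_i^2-v_i^2)/u_i$ genuinely lie in $H^1_0(\Omega)$ and that Picone's inequality applies in the present setting. The strict positivity of the four functions together with Hopf's lemma (for $\Omega$ bounded) or the decay coming from the confining potential (for $\Omega=\R^N$) handles the boundary/infinity behaviour; after that, the algebraic positive definiteness of $Q$ does the rest.
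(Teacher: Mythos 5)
Your proposal is correct and is essentially the paper's own argument: the paper sets $w_i=u_i/v_i$ and tests $-\nabla\cdot(v_i^2\nabla w_i)$ against $(w_i^2-1)/w_i$, which is algebraically identical to your Picone/Diaz--Saa test (the resulting left-hand side $v_i^2|\nabla w_i|^2(1+1/w_i^2)$ is exactly the Picone remainder), and then concludes from the positive definiteness of the quadratic form in $X_i=u_i^2-v_i^2$ guaranteed by $\beta^2<\mu_1\mu_2$. The only point you treat somewhat lightly --- admissibility of the test functions when $\Omega=\R^N$ --- is handled in the paper by working on balls $B_{R_k}$ and killing the boundary terms via separately proved $L^\infty$ and exponential decay estimates, but your plan correctly identifies this as the technical step to verify.
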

\begin{proof}
In the case $\Omega$ bounded, the result is proved in \cite[Theorem 4.1]{AlamaBronsardMironescu2009}.
If $\Omega=\R^N$, $\beta<0$ and $V_1(x)=V_2(x)=|x|^2$, the uniqueness is shown in
\cite{AftalionNorisSourdis2014}. Let us give a
sketch of such proof. Take two couples of solutions $(u_1,u_2)$ and $(v_1,v_2)$ and let $w_i=u_i/v_i$.
Then
\[
-\nabla\cdot(v_i^2\nabla w_i)=u_i\Delta v_i-v_i\Delta u_i
=\gamma w_iv_i^2\left( \m_iv_i^2(w_i^2-1)+\b v_j^2(w_j^2-1) \right).
\]
We test by $(w_i^2-1)/w_i$ in a ball of radius $R$ to obtain
\[
\begin{split}
\int_{B_R} v_i^2 |\nabla w_i|^2\left( 1+\frac{1}{w_i^2} \right)\,dx
=\gamma \int_{B_R} v_i^2 (w_i^2-1) \left( \m_iv_i^2(w_i^2-1)+\b v_j^2(w_j^2-1) \right) \\
+\int_{\partial B_R} \left[ \left(u_i-\frac{v_i^2}{u_i}\right)\nabla u_i-\left(\frac{u_i^2}{v_i}-v_i\right)\nabla v_i \right]\cdot\nu \,d\sigma.
\end{split}
\]
Since $\m_i<0$ and $\b^2<\m_1\m_2$, there exists $\kappa>0$ such that
\begin{equation}\label{eq:kappa_def}
|\b|\leq \sqrt{|\m_1|-\kappa}\sqrt{|\m_2|-\kappa},
\end{equation}
so that the previous equality implies
%\[
%|2\b v_1^2v_2^2(w_1^2-1)(w_2^2-1)|
%\leq (-\m_1-k)v_1^4(w_1^2-1)^2 + (-\m_2-k)v_2^4(w_2^2-1)^2.
%\]
%We  obtain
\[
\begin{split}
\sum_{i=1}^2 \int_{B_R} \left[ v_i^2 |\nabla w_i|^2\left( 1+\frac{1}{w_i^2} \right)
+\gamma kv_i^4(w_i^2-1)^2 \right] \,dx\\
\leq \sum_{i=1}^2 \int_{\partial B_R} \left[ \left(u_i-\frac{v_i^2}{u_i}\right)\nabla u_i-\left(\frac{u_i^2}{v_i}-v_i\right)\nabla v_i \right]\cdot\nu \,d\sigma.\\
\end{split}
\]
In \cite[Proposition 2.3]{AftalionNorisSourdis2014}, suitable a priori
estimates are obtained, which yield the existence of a sequence $R_k\to\infty$ such that
\begin{equation}\label{eq:boundary_term_R_k}
\sum_{i=1}^2 \int_{\partial B_{R_k}} \left[ \left(u_i-\frac{v_i^2}{u_i}\right)\nabla u_i-\left(\frac{u_i^2}{v_i}-v_i\right)\nabla v_i \right]\cdot\nu \,d\sigma \to 0
\end{equation}
as $R_k\to\infty$, which provides $u_i=v_i$ for $i=1,2$.

The same scheme can be applied also in the case of more general potentials satisfying
\eqref{eq:V_assumptionT}, exploiting the following a priori estimates.
\end{proof}

\begin{lemma}%\label{prop:a_priori_estimates}
Let $V_1,V_2$ satisfy \eqref{eq:V_assumptionT}, $\mu_1,\mu_2<0$, and $\beta<\sqrt{\mu_1\mu_2}$. There exist constants $C,c_0,R_0>0$, depending only on $\mu_1,\mu_2,\beta,V_1,V_2,\omega_1,\omega_2$, such that every positive solution $(u_1,u_2)\in\mathcal{H}$ of
\[
\left\{\begin{array}{ll}
-\Delta u_1+(V_1(x)+\omega_1)u_1=\mu_1u_1^3+\beta u_1u_2^2 \quad & \R^N \\
-\Delta u_2+(V_2(x)+\omega_2)u_2=\mu_2u_2^3+\beta u_2u_1^2 \quad & \R^N
\end{array}\right.
\]
satisfies
\[
\|u_i\|_{L^\infty(\R^N)}\leq C, \qquad u_i(x)\leq C e^{-\sqrt{c_0}(|x|-R_0)}, \ |x|\geq R_0, \qquad i=1,2.
\]
\end{lemma}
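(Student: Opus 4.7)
The plan is to prove the two assertions in order: first the uniform $L^\infty$ bound via a maximum-point argument, and then the exponential decay via comparison with a radial barrier. A preliminary fact I would establish is that each positive solution $(u_1,u_2)\in\mathcal{H}$ is continuous and vanishes at infinity. Indeed, since $N\leq 3$ the cubic nonlinearity is Sobolev-subcritical, so elliptic bootstrap yields $u_i\in C^1_{\mathrm{loc}}(\R^N)$; combined with the integrability $\int_{\R^N}V_i u_i^2\,dx<\infty$ (which, since $V_i(x)\to\infty$, forces $\int_{|x|>R}u_i^2\,dx\to 0$ as $R\to\infty$) and local Hölder estimates, this gives the pointwise decay $u_i(x)\to 0$ as $|x|\to\infty$. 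In particular $M_i:=\sup_{\R^N}u_i=u_i(x_i^*)$ is attained at some $x_i^*\in\R^N$.

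For Step 1 I would evaluate the $i$-th equation at $x_i^*$: since $-\Delta u_i(x_i^*)\geq 0$ and $u_i(x_i^*)>0$, dividing by $u_i(x_i^*)$ and using $V_i\geq 0$ gives
\[
\omega_i\leq V_i(x_i^*)+\omega_i \leq \mu_i M_i^2+\beta\, u_j(x_i^*)^2.
\]
Recalling $\mu_i<0$ and setting $K:=\max_i|\omega_i|$, this rearranges to $|\mu_i|M_i^2\leq K+\max(\beta,0)\,M_j^2$. When $\beta\leq 0$ this already yields $M_i^2\leq K/|\mu_i|$. When $0<\beta<\sqrt{\mu_1\mu_2}$, multiplying the two inequalities (for $i=1,2$) and using $\beta^2<\mu_1\mu_2$ gives
\[
(\mu_1\mu_2-\beta^2)M_1^2M_2^2 \leq K^2+K\beta(M_1^2+M_2^2),
\]
whose quartic left-hand side cannot be absorbed by the at-most-quadratic right-hand side unless both $M_i$ are bounded by a constant depending only on $\mu_1,\mu_2,\beta,\omega_1,\omega_2$.

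For Step 2 I would exploit $\|u_i\|_\infty\leq C$ from Step 1 together with $V_i(x)\to\infty$ from \eqref{eq:V_assumptionT}, choosing $R_0$ so large that
\[
V_i(x)+\omega_i-\mu_i u_i(x)^2-\beta u_j(x)^2 \geq V_i(x)+\omega_i-|\beta|C^2 \geq c_0>0 \qquad\text{for }|x|\geq R_0,
\]
(the middle inequality uses $\mu_i<0$). Consequently $-\Delta u_i+c_0 u_i\leq 0$ on $\{|x|\geq R_0\}$. A direct radial computation shows that $\psi(x):=e^{-\sqrt{c_0}(|x|-R_0)}$ is a supersolution:
\[
-\Delta\psi+c_0\psi=\frac{(N-1)\sqrt{c_0}}{|x|}\,\psi\geq 0 \qquad\text{on }|x|>R_0.
\]
Setting $w:=u_i-C\psi$ with $C\geq\|u_i\|_{L^\infty(\partial B_{R_0})}$, we have $w\leq 0$ on $\partial B_{R_0}$, $w\to 0$ at infinity, and $-\Delta w+c_0 w\leq 0$; the weak maximum principle then forces $w\leq 0$ on $\{|x|\geq R_0\}$, which is the claimed exponential bound.

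The main obstacle is the preliminary regularity/decay statement underlying the attainment of the supremum and the vanishing of $u_i$ at infinity. This is not immediate from $u_i\in\mathcal{H}$ and relies on the interplay between interior elliptic regularity in the subcritical regime and the trapping effect of the potential; once it is secured, both steps follow from standard maximum-principle reasoning.
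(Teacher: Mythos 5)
Your overall strategy is sound, but it departs from the paper's on the key first step, and one deduction needs repair. For the $L^\infty$ bound the paper never touches a maximum point: it combines the algebraic inequality $m(\mu_1u_1^3+\beta u_1u_2^2)+\mu_2u_2^3+\beta u_1^2u_2\le -\delta(mu_1+u_2)^3$ (with $m=(\mu_2/\mu_1)^{1/4}$, valid precisely when $\beta<\sqrt{\mu_1\mu_2}$) with Kato's inequality to show that $z=\sqrt{\delta}(mu_1+u_2)-\sqrt{M}$ satisfies $\Delta z^+\ge (z^+)^3$ distributionally, and then invokes Brezis's nonexistence lemma to conclude $z^+\equiv 0$. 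This works for $H^1$ solutions with no regularity or decay input whatsoever. Your route instead evaluates the equation at an interior maximum, which forces you to first prove that $u_i$ is (twice) differentiable enough and tends to $0$ at infinity so that the supremum is attained; you correctly identify this as the main obstacle, and the sketch you give (Brezis--Kato bootstrap in $N=3$, uniform local sup estimates using $\|u_j^2\|_{L^{3}(B_1(x_0))}\le C\|u_j\|_{H^1}^2$ with $3>N/2$, plus the $L^2$ tail decay forced by $V_i\to\infty$) can be made to work, but it is a nontrivial block of analysis that the paper's argument simply bypasses. Separately, your case $0<\beta<\sqrt{\mu_1\mu_2}$ is not correctly concluded as written: the product inequality $(\mu_1\mu_2-\beta^2)M_1^2M_2^2\le K^2+K\beta(M_1^2+M_2^2)$ does \emph{not} by itself bound $M_1,M_2$ (take $M_2\to0$, $M_1\to\infty$). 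The correct deduction is by substitution: inserting $|\mu_2|M_2^2\le K+\beta M_1^2$ into $|\mu_1|M_1^2\le K+\beta M_2^2$ gives $(\mu_1\mu_2-\beta^2)M_1^2\le K(|\mu_2|+\beta)$, and symmetrically for $M_2$; this is an immediate fix using inequalities you already have. Your decay step is essentially identical to the paper's (same constant $c_0$, same barrier $e^{-\sqrt{c_0}(|x|-R_0)}$, same exterior maximum principle), and is correct.
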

\begin{proof}
\underline{Uniform bounds}. Let $m=(\mu_2/\mu_1)^{1/4}$. Proceeding as in
\cite[Theorem 2.1]{MR2629888}, we see that there exists $\delta>0$ such that
\begin{equation}\label{eq:dww}
m(\mu_1 u_1^3+\beta u_1 u_2^2)+\mu_2 u_2^3+\beta u_1^2 u_2 \leq -\delta (m u_1+u_2)^3,
\end{equation}
for every $u_1,u_2>0$. Let
\[
M=\max_{x\in\R^N} (-V_1(x)-\omega_1,-V_2(x)-\omega_2).
\]
Notice that $M>0$ since
\[
\begin{split}
\int_{\R^N}[(V_1(x)+\omega_1)u_1^2+(V_2(x)+\omega_2)u_2^2]\,dx = \\
=\int_{\R^N}[-|\nabla u_1|^2-|\nabla u_2|^2+\mu_1u_1^4+\mu_2 u_2^4 +2\beta u_1^2 u_2^2]\,dx <0,
\end{split}
\]
and hence we can define
\[
z=\sqrt{\delta}(m u_1+u_2)-\sqrt{M}.
\]
By applying in turn Kato's inequality, the definition of $M$ and \eqref{eq:dww}, we have (here $\chi_U$ is the characteristic function of the set $U\subset\R^N$)
\[
\begin{split}
\Delta z^+ \geq \chi_{\{z\geq0\}} \sqrt{\delta} [ m u_1(V_1(x)+\omega_1)-m(\mu_1 u_1^3+\beta u_1 u_2^2) \\
+ u_2(V_2(x)+\omega_2) -(\mu_2 u_2^3+\beta u_1^2 u_2)] \\
\geq \chi_{\{z\geq0\}} \sqrt{\delta} (m u_1+u_2)[-M+\delta (m u_1+u_2)^2].
\end{split}
\]
We replace the definition of $z$ in the right hand side above to obtain
\[
\Delta z^+ \geq \chi_{\{z\geq0\}} (z+\sqrt{M})[-M+(z+\sqrt{M})^2] \geq (z^+)^3,
\]
which implies $z^+\equiv 0$ by the non-existence result \cite[Lemma 2]{Brezis84}, and hence the $L^\infty$-bounds.

\underline{Decay at infinity}. By the previous step and by \eqref{eq:V_assumptionT}, there exist $c_0,R_0>0$ such that
\[
V_i(x)+\omega_i-\beta u_j^2 \geq V_i(x) +\omega_i -|\beta| \|u_j\|_{L^\infty(\R^N)}^2 \geq c_0, \quad |x|\geq R_0,
\]
$i=1,2$, $j\neq i$. Then
\[
-\Delta u_i +c_0 u_i \leq \mu_i u_i^3 <0, \quad |x|\geq R_0,
\]
$i=1,2$. Let
\[
W_i(r)=\|u_i\|_{L^\infty(\R^N)}e^{-\sqrt{c_0}(r-R_0)}, \quad r=|x|\geq R_0,
\]
then we have $W_i(R_0)\geq \max_{\partial B_{R_0}}u_i$ and
\[
-\Delta W_i+c_0 W_i \geq 0, \quad r\geq R_0.
\]
By the maximum principle (which applies thanks to \eqref{eq:decay_u}), we deduce that $u_i(x) \leq W_i(|x|)$ for $|x|\geq R_0$.
\end{proof}

We define a map $\Phi:\mathcal{H}\times \R^4\to \mathcal{H}^*\times \R^3$ acting on $(u_1,u_2,\omega_1,\omega_2,\gamma,\alpha)\in \mathcal{H}\times \R^4$ as follows
\begin{equation}\label{eq:F_def}
\begin{split}
\text{for } i=1,2 \qquad & \Phi_i=\Delta u_i-(V_i(x)+\omega_i) u_i + \gamma u_i(\m_i u_i^2+\b u_j^2), \quad j\neq i \\
\text{for } i=3,4 \qquad & \Phi_i=\int_\Omega u_i^2\,dx - \rho_i, \\
& \Phi_5=\|(u_1,u_2)\|_{\mathcal{H}}^2 -\alpha.
\end{split}
\end{equation}

\begin{lemma}\label{lemma:defocusing_weak_nondegenerate}
If $(u_1,u_2,\omega_1,\omega_2,\gamma,\alpha)\in \mathcal{S}$, then the linear bounded operator
\[
L=\Phi_{(u_1,u_2,\omega_1,\omega_2,\gamma)}(u_1,u_2,\omega_1,\omega_2,\gamma,\alpha): \mathcal{H}\times\R^3 \to \mathcal{H}^*\times \R^3
\]
is invertible.
\end{lemma}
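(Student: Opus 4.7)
The plan is to show that $L$ is Fredholm of index zero and that it is injective, which together give invertibility.

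For Fredholmness, I would write $L = L_0 + K$, where
\[
L_0(v_1,v_2,o_1,o_2,g) = \bigl((-\Delta + V_1)v_1,\ (-\Delta + V_2)v_2,\ o_1,\ o_2,\ g\bigr)
\]
is an isomorphism from $\Hcal\times\R^3$ onto $\Hcal^*\times\R^3$. The remaining contributions collected in $K$ are either bounded multiplicative operators in $v$ with $L^\infty$ coefficients (compact into $\Hcal^*$ via the compact Sobolev embedding, using $u_i\in L^\infty(\Omega)$ from the a priori estimates preceding this lemma), or finite-rank maps coming from the scalar variables $o_i,g$ in the PDE block and from $v$ in the three constraint components. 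Hence $K$ is compact and $L$ is Fredholm of index zero.

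For injectivity, assume $L[v_1,v_2,o_1,o_2,g]=0$, so the linearized equations
\[
-\Delta v_i + (V_i+\omega_i)v_i + o_i u_i = g(\mu_i u_i^3+\beta u_i u_j^2) + \gamma\bigl[(3\mu_i u_i^2+\beta u_j^2)v_i + 2\beta u_i u_j v_j\bigr]
\]
hold, together with $\int_\Omega u_i v_i\, dx=0$ and $\sum_i\int_\Omega(\nabla u_i\cdot\nabla v_i + V_i u_i v_i)\, dx = 0$. Testing the $i$-th equation by $v_i$ and summing, the $o_i$-terms cancel by the mass constraints, while the $g$-contribution equals $g\,F'(u_1,u_2)[v_1,v_2]$ and vanishes via the PDE for $u_i$ together with the norm constraint, yielding the identity
\[
\|(v_1,v_2)\|_\Hcal^2 + \sum_i\omega_i\int_\Omega v_i^2\, dx = \gamma F''(u_1,u_2)[v_1,v_2]. \quad (\star)
\]
In parallel, I would apply a Picone-type identity, obtained by testing the equation for $u_i$ against $v_i^2/u_i$:
\[
\int_\Omega\bigl(|\nabla v_i|^2+V_i v_i^2\bigr)dx + \omega_i\int_\Omega v_i^2\, dx - \int_\Omega\Bigl|\nabla v_i - \frac{v_i}{u_i}\nabla u_i\Bigr|^2 dx = \gamma\int_\Omega(\mu_i u_i^2+\beta u_j^2)v_i^2\, dx.
\]
Summing over $i$ and substituting $(\star)$ to eliminate $\|(v_1,v_2)\|_\Hcal^2+\sum\omega_i\int v_i^2$ on the left, the explicit form of $F''$ causes the quartic pieces to partially telescope, and I would obtain
\[
\sum_{i=1}^2\int_\Omega\Bigl|\nabla v_i - \frac{v_i}{u_i}\nabla u_i\Bigr|^2 dx = 2\gamma\int_\Omega\bigl[\mu_1(u_1 v_1)^2 + 2\beta(u_1 v_1)(u_2 v_2) + \mu_2(u_2 v_2)^2\bigr] dx.
\]
Under the defocusing weak-interaction hypothesis the quadratic form $\mu_1 a^2 + 2\beta ab + \mu_2 b^2$ is negative definite (trace $\mu_1+\mu_2<0$, determinant $\mu_1\mu_2-\beta^2>0$), so the right-hand side is non-positive while the left-hand side is non-negative: both must vanish. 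This forces $\nabla(v_i/u_i)\equiv 0$, hence $v_i=c_i u_i$, and the mass constraint $c_i\rho_i=0$ then gives $v_1=v_2=0$.

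With $v\equiv 0$ the linearized PDEs collapse to the pointwise identities $o_i = g(\mu_i u_i^2+\beta u_j^2)$ in $\{u_i>0\}$. Since $u_i$ tends to $0$ at $\partial\Omega$ or at infinity, the constant $o_i$ must be zero; if $g\neq 0$ this also forces $\mu_i u_i^2+\beta u_j^2\equiv 0$ for both $i$, and integrating produces the linear system $\mu_i\rho_i+\beta\rho_j=0$ whose determinant $\mu_1\mu_2-\beta^2\neq 0$ contradicts $\rho_i>0$. Hence $g=o_1=o_2=0$, proving injectivity. The most delicate point is the Picone step: $v_i^2/u_i$ need not belong to $H^1_0(\Omega)$ since $u_i$ vanishes on $\partial\Omega$, so I would justify it by a standard regularization (replacing $u_i$ by $u_i+\varepsilon$ and letting $\varepsilon\to 0^+$), combined with the Hopf lemma in the bounded case or the exponential decay estimates established just above in the case $\Omega=\R^N$.
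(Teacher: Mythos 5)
Your proposal is correct and follows essentially the same route as the paper: reduce to injectivity via the Fredholm alternative, test the linearized equations by $v_i$ using the three scalar constraints to kill the $o_i$- and $g$-terms, and combine with the Picone identity obtained by testing the equation for $u_i$ against $v_i^2/u_i$ so that the negative definiteness of $\mu_1 a^2+2\beta ab+\mu_2 b^2$ forces $v\equiv 0$, after which $g=o_1=o_2=0$ follows from the vanishing of $u_i$ at the boundary together with $\mu_1\mu_2-\beta^2\neq0$. The only differences are presentational: you make the compact-perturbation decomposition explicit and keep the Picone remainder as an equality, where the paper drops it as a nonnegative term and works with the resulting inequality.
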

\begin{proof}
By Fredholm's Alternative, it will be enough to prove that $L$ is injective.
$L$ acts on $(v_1,v_2,o_1,o_2,g)$ as follows:
\[
L_i=\Delta v_i-(V_i(x)+\omega_i) v_i-o_i u_i + g u_i(\m_i u_i^2+\b u_j^2) +\gamma (3\m_i u_i^2 v_i+\b u_j^2 v_i +2\b u_1 u_2 v_j),
\]
for $i=1,2$, $L_i=2\int_\Omega u_i v_i \,dx$ for $i=3,4$, and
\[
L_5=2\int_\Omega (\nabla u_1\cdot\nabla v_1+V_1(x)u_1v_1+
\nabla u_2\cdot\nabla v_2+V_2(x)u_2v_2)\,dx.
\]
Suppose that $L(v_1,v_2,o_1,o_2,g)=0$. Testing the equation for $u_i$ by $v_i$, taking the sum for $i=1,2$ and using $L_3=L_4=L_5=0$ we find
\begin{equation}\label{eq:auxiliary_L_injective1}
\sum_{\substack{i=1 \\ j\neq i}}^2 \int_\Omega u_i v_i (\m_i u_i^2+\b u_j^2)\,dx =0.
\end{equation}
Testing the equation $L_i=0$ by $v_i$ for $i=1,2$, taking the sum and using the previous equality, we obtain
\begin{equation}\label{eq:auxiliary_L_injective2}
\sum_{\substack{i=1 \\ j\neq i}}^2 \int_\Omega \left\{ |\nabla v_i|^2 + (V_i(x)+\omega_i) v_i^2 -\gamma v_i^2(3\m_i u_i^2+\b u_j^2) \right\}\,dx -4\gamma \b \int_\Omega u_1u_2v_1v_2 \,dx=0.
\end{equation}
On the other hand, testing the equation for $u_i$ by $v_i^2/u_i$ leads to (the boundary term vanishes as in \eqref{eq:boundary_term_R_k})
\begin{equation}\label{eq:auxiliary_L_injective3}
\begin{split}
\int_\Omega\left\{ -(V_i(x)+\omega_i) +\gamma (\m_i u_i^2+\b u_j^2) \right\}v_i^2\,dx= \int_\Omega \nabla u_i\cdot\nabla\left( \frac{v_i^2}{u_i} \right)\,dx \\
=-\int_\Omega\left| \frac{v_i}{u_i}\nabla u_i-\nabla v_i\right|^2\,dx+\int_\Omega |\nabla v_i|^2\,dx \leq \int_\Omega |\nabla v_i|^2\,dx.
\end{split}
\end{equation}
Taking the sum for $i=1,2$ and exploiting \eqref{eq:auxiliary_L_injective2} we obtain
\[
\sum_{\substack{i=1 \\ j\neq i}}^2 \int_\Omega \m_i u_i^2 v_i^2 \,dx + 2\b \int_\Omega u_1 u_2 v_1 v_2 \,dx \geq 0.
\]
This, together with \eqref{eq:kappa_def}, implies $-\kappa \int_\Omega (u_1^2 v_1^2 + u_2^2 v_2^2)\,dx \geq 0$, and hence $v_1\equiv v_2 \equiv0$. In turn, the equations $L_1=L_2=0$ become $g(\m_i u_i^2+\b u_j^2)=o_i$, $i=1,2$, which provides $g=o_1=o_2=0$.
\end{proof}

Reasoning as in the previous proof, we also have the following related result regarding non degeneracy.

\begin{lemma}\label{eq:nondegeneracy_m_i<0}
Given $\omega_1,\omega_2\in\R$ and $\gamma>0$, the positive solution $(u_1,u_2)\in\mathcal{H}$ of
system \eqref{eq:system_main} is non degenerate as a critical point of the action functional
\begin{equation}\label{eq:action_functional_def}
\mathcal{A}_{\gamma,\omega_1,\omega_2}(u_1,u_2)=
\frac12\|(u_1,u_2)\|^2_{\Hcal}-\gamma F(u_1,u_2)+\frac{\omega_1}{2}
\mathcal{Q}_1(u_1)+\frac{\omega_2}{2}\mathcal{Q}_2(u_2).
\end{equation}
\end{lemma}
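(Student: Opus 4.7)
The plan is to mirror the injectivity argument in the proof of Lemma \ref{lemma:defocusing_weak_nondegenerate}, specialized to the situation without Lagrange multipliers. Non-degeneracy of $(u_1,u_2)$ as a critical point of $\mathcal{A}_{\gamma,\omega_1,\omega_2}$ amounts to showing that any $(v_1,v_2)\in\Hcal$ in the kernel of $\mathcal{A}''_{\gamma,\omega_1,\omega_2}(u_1,u_2)$, i.e.\ any solution of the linearized system
\[
-\Delta v_i + (V_i(x)+\omega_i)v_i - \gamma\bigl(3\mu_i u_i^2 v_i + \beta u_j^2 v_i + 2\beta u_1 u_2 v_j\bigr)=0,\qquad i=1,2,\ j\neq i,
\]
must vanish identically.

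First I would test the linearized equation by $v_i$ and sum over $i$. The absence of the Lagrange multipliers $o_1,o_2,g$ (which appeared in Lemma \ref{lemma:defocusing_weak_nondegenerate} because of the constraints $L_3=L_4=L_5=0$) means that this test directly yields identity \eqref{eq:auxiliary_L_injective2}:
\[
\sum_{i=1,\, j\neq i}^2 \int_\Omega\!\Bigl\{|\nabla v_i|^2 + (V_i+\omega_i)v_i^2 - \gamma v_i^2(3\mu_i u_i^2+\beta u_j^2)\Bigr\}dx - 4\gamma\beta\int_\Omega u_1 u_2 v_1 v_2\,dx = 0.
\]
Next, since $u_i>0$ solves \eqref{eq:system_main}, one can test the equation for $u_i$ against $v_i^2/u_i$; exactly as in \eqref{eq:auxiliary_L_injective3}, dropping the nonnegative square of the Kato-type gradient term and handling the boundary contribution in $\R^N$ via the cutoff argument of \eqref{eq:boundary_term_R_k}, we obtain
\[
\int_\Omega\Bigl\{-(V_i+\omega_i)+\gamma(\mu_i u_i^2+\beta u_j^2)\Bigr\}v_i^2\,dx \le \int_\Omega|\nabla v_i|^2\,dx.
\]

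Summing this in $i$ and substituting into the previous identity eliminates the gradient, potential and $\omega_i$ terms and yields
\[
\sum_{i=1,\, j\neq i}^2 \int_\Omega \mu_i u_i^2 v_i^2\,dx + 2\beta\int_\Omega u_1 u_2 v_1 v_2\,dx \ge 0.
\]
Finally, exploiting $\mu_i<0$ and the bound $|\beta|\le\sqrt{|\mu_1|-\kappa}\sqrt{|\mu_2|-\kappa}$ from \eqref{eq:kappa_def}, Young's inequality with the weights $\sqrt{(|\mu_2|-\kappa)/(|\mu_1|-\kappa)}$ gives
\[
2|\beta|\,|u_1 u_2 v_1 v_2| \le (|\mu_1|-\kappa)u_1^2 v_1^2 + (|\mu_2|-\kappa)u_2^2 v_2^2,
\]
which reduces the previous display to $-\kappa\int_\Omega(u_1^2 v_1^2+u_2^2 v_2^2)\,dx\ge 0$. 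Since $\kappa>0$ and $u_i>0$, this forces $v_1\equiv v_2\equiv 0$.

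I don't anticipate a real obstacle: the argument is structurally identical to (in fact a simplification of) the one already carried out in Lemma \ref{lemma:defocusing_weak_nondegenerate}. The only point requiring a little care is the rigorous justification of testing by $v_i^2/u_i$, which in the whole-space case is handled exactly as in \cite{AftalionNorisSourdis2014} by using the a priori decay estimates on $u_i$ to kill the boundary term along a suitable sequence $R_k\to\infty$, just as in \eqref{eq:boundary_term_R_k}.
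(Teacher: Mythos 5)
Your proposal is correct and follows essentially the same route as the paper: the paper's proof of this lemma consists precisely of testing the linearized system by $v_i$ to arrive at \eqref{eq:auxiliary_L_injective2} and then repeating the argument of Lemma \ref{lemma:defocusing_weak_nondegenerate} (testing the $u_i$-equation by $v_i^2/u_i$, handling the boundary term as in \eqref{eq:boundary_term_R_k}, and invoking \eqref{eq:kappa_def}) to conclude $-\kappa\int_\Omega(u_1^2v_1^2+u_2^2v_2^2)\,dx\ge 0$, hence $v_1\equiv v_2\equiv 0$. Your explicit Young's inequality step just spells out what the paper leaves implicit in its use of \eqref{eq:kappa_def}.
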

\begin{proof}
Take $(v_1,v_2)\in \mathcal{H}$ such that
\[
-\Delta v_i+(V_i(x)+\omega_i) v_i=\gamma(3\m_iu_i^2v_i+\b u_j^2v_i+2\b u_1u_2v_j) \qquad \text{ for } i,j=1,2, \,  j\neq i.
\]
By testing the equation of $v_i$ by $v_i$ itself, integrating by parts and summing up, we are lead to \eqref{eq:auxiliary_L_injective2}. Following the previous proof, we can then obtain once again that $-\kappa \int_\Omega (u_1^2 v_1^2 + u_2^2 v_2^2)\,dx \geq 0$, and hence $v_1\equiv v_2 \equiv0$, which proves the claim.
\end{proof}
\begin{lemma}\label{lem:defocusing_weak_gamma_increasing}
If $(u_1,u_2,\omega_1,\omega_2,\gamma,\alpha)\in\mathcal{S}$ then $\gamma'(\alpha)>0$ for every $\alpha>\lambda_{V_1}\rho_1+\lambda_{V_2}\rho_2$.
\end{lemma}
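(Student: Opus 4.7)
My plan is to differentiate the solution curve in $\alpha$, derive an identity of the form $g=\gamma'(\alpha)=2\gamma\,\mathcal{A}''_{\gamma,\omega_1,\omega_2}(u_1,u_2)[v,v]$ for the tangent direction $v$, and then establish strict positivity of this second variation through the same coercive estimate powering Lemma \ref{lem:uniqueness_mu_i<0} and Lemma \ref{eq:nondegeneracy_m_i<0}. Smoothness of $\alpha\mapsto(u_1(\alpha),u_2(\alpha),\omega_1(\alpha),\omega_2(\alpha),\gamma(\alpha))$ is provided by the implicit function theorem applied to the invertibility result of Lemma \ref{lemma:defocusing_weak_nondegenerate}. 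Writing $(v_1,v_2,o_1,o_2,g)$ for the $\alpha$-derivatives, differentiation of the constraints $\int_\Omega u_i^2\,dx = \rho_i$ and $\|(u_1,u_2)\|_{\mathcal{H}}^2 = \alpha$ yields
\[
\int_\Omega u_i v_i\,dx = 0 \quad (i=1,2), \qquad \sum_{i=1}^{2}\int_\Omega(\nabla u_i\cdot\nabla v_i + V_i u_i v_i)\,dx = \tfrac{1}{2},
\]
while differentiation of \eqref{eq:system_main} gives, for $i=1,2$ and $j\neq i$,
\[
-\Delta v_i + (V_i+\omega_i) v_i - \gamma(3\mu_i u_i^2+\beta u_j^2)v_i - 2\gamma\beta u_i u_j v_j = g(\mu_i u_i^3+\beta u_i u_j^2) - o_i u_i.
\]

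Testing this linearized PDE by $v_i$, summing, and using the orthogonality $\int_\Omega u_i v_i\,dx=0$ (which eliminates the $o_i u_i$ contribution) together with the relation $\sum_i\int_\Omega v_i(\mu_i u_i^3+\beta u_i u_j^2)\,dx = \tfrac{1}{2\gamma}$ (which follows from testing the original system by $v_i$ and invoking the above normalization) would produce the key identity
\[
\mathcal{A}''_{\gamma,\omega_1,\omega_2}(u_1,u_2)[v,v] = \frac{g}{2\gamma},
\]
with $\mathcal{A}$ as in \eqref{eq:action_functional_def}. To lower bound the Hessian form, I would replay the Jacobi-type argument of Lemma \ref{eq:nondegeneracy_m_i<0}, testing the equation for $u_i$ by $v_i^2/u_i$ (legitimate because $u_i>0$ in $\Omega$, with boundary contributions at infinity vanishing along a suitable $R_k\to\infty$ as in \eqref{eq:boundary_term_R_k}), obtaining
\[
\sum_i\int_\Omega(|\nabla v_i|^2 + (V_i+\omega_i)v_i^2)\,dx \geq \gamma\sum_i\int_\Omega(\mu_i u_i^2 + \beta u_j^2) v_i^2\,dx.
\]
Inserting this bound into the explicit expansion of $\mathcal{A}''[v,v]$, the mixed terms $\gamma\beta u_j^2 v_i^2$ cancel exactly, leaving
\[
\mathcal{A}''_{\gamma,\omega_1,\omega_2}(u_1,u_2)[v,v] \geq -2\gamma\int_\Omega\bigl(\mu_1 u_1^2 v_1^2+\mu_2 u_2^2 v_2^2+2\beta u_1 u_2 v_1 v_2\bigr)\,dx.
\]

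The weak interaction assumption now enters through the constant $\kappa$ of \eqref{eq:kappa_def}: since $2|\beta|\,|u_1 v_1|\,|u_2 v_2| \leq (|\mu_1|-\kappa)u_1^2 v_1^2 + (|\mu_2|-\kappa)u_2^2 v_2^2$ pointwise, and $\mu_i=-|\mu_i|$, one finds
\[
\mu_1 u_1^2 v_1^2 + \mu_2 u_2^2 v_2^2 + 2\beta u_1 u_2 v_1 v_2 \leq -\kappa(u_1^2 v_1^2 + u_2^2 v_2^2),
\]
so that $\mathcal{A}''[v,v] \geq 2\gamma\kappa\int_\Omega(u_1^2 v_1^2 + u_2^2 v_2^2)\,dx$. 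The normalization identity forces $v\not\equiv 0$ (otherwise $0=\tfrac{1}{2}$), and since $u_1,u_2>0$ throughout $\Omega$ the right-hand side is strictly positive, yielding $g>0$. The main delicate point I anticipate is the bookkeeping needed to carry out the algebraic cancellation between the Jacobi-type inequality and the expansion of $\mathcal{A}''[v,v]$; once that step is in hand, strict positivity comes for free from the weak coupling hypothesis, in exactly the same spirit as the uniqueness and non-degeneracy results of this section.
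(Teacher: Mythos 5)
Your proposal is correct, and it hinges on the same central identity as the paper's proof, namely $\mathcal{A}''_{\gamma,\omega_1,\omega_2}(u_1,u_2)[(v_1,v_2),(v_1,v_2)]=\gamma'/(2\gamma)$, obtained from the linearized system together with \eqref{eq:identity_1/2} and the orthogonality $\int_\Omega u_iv_i\,dx=0$. Where you diverge is in how positivity of the left-hand side is established. The paper argues conceptually: by Lemma \ref{lem:uniqueness_mu_i<0} the pair $(u_1,u_2)$ is the global minimizer of $\mathcal{A}_{\gamma,\omega_1,\omega_2}$, so $\mathcal{A}''\geq 0$ as a quadratic form, and the non-degeneracy of Lemma \ref{eq:nondegeneracy_m_i<0} upgrades this to strict positivity on all of $\Hcal\setminus\{0\}$, which is then evaluated at $(v_1,v_2)$. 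You instead run the Picone-type computation directly on the tangent vector: testing the equation for $u_i$ by $v_i^2/u_i$ as in \eqref{eq:auxiliary_L_injective3} (with the boundary terms handled as in \eqref{eq:boundary_term_R_k}), observing the exact cancellation of the $\gamma\beta u_j^2 v_i^2$ terms, and invoking the constant $\kappa$ of \eqref{eq:kappa_def}; your algebra here checks out and reproduces precisely the estimate used inside the proofs of Lemmas \ref{lemma:defocusing_weak_nondegenerate} and \ref{eq:nondegeneracy_m_i<0}. Your route is more self-contained and quantitative -- it yields the explicit bound $\gamma'\geq 4\gamma^2\kappa\int_\Omega(u_1^2v_1^2+u_2^2v_2^2)\,dx$ and does not need the uniqueness lemma, nor the (slightly implicit) step that global minimality plus trivial kernel gives a positive definite Hessian. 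The paper's route buys a cleaner conceptual statement (positive definiteness of $\mathcal{A}''$ at the unique minimizer) that is reused elsewhere, at the cost of routing through two auxiliary lemmas. Your observation that $(v_1,v_2)\not\equiv(0,0)$ because of the normalization $\sum_i\int_\Omega(\nabla u_i\cdot\nabla v_i+V_iu_iv_i)\,dx=\tfrac12$ correctly closes the argument.
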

\begin{proof}
Fix $\alpha$ and consider the corresponding $(u_1,u_2,\omega_1,\omega_2,\gamma)$. Observe that, due to the assumptions on $\b,\m_1,\m_2$, the functional $\mathcal{A}_{\gamma,\omega_1,\omega_2}$ admits a global minimum in $\mathcal{H}$.
From the uniqueness result of Lemma \ref{lem:uniqueness_mu_i<0}, we deduce that actually
\[
\min_{\mathcal{H}} \mathcal{A}_{\gamma,\omega_1,\omega_2}
=\mathcal{A}_{\gamma,\omega_1,\omega_2}(u_1,u_2).
\]
By combining this with the non degeneracy result of Lemma \ref{eq:nondegeneracy_m_i<0}, we have that
\begin{equation}\label{eq:action_convex}
\mathcal{A}''_{\gamma,\omega_1,\omega_2}(u_1,u_2)[(\phi_1,\phi_2),(\phi_1,\phi_2)]>0 \qquad \forall \, (\phi_1,\phi_2)\neq (0,0).
\end{equation}
Thanks to Lemma \ref{lemma:defocusing_weak_nondegenerate}, we can locally differentiate the elements of $\mathcal{S}$ with respect to $\alpha$. Let
\[
\frac{d}{d\alpha} (u_1(\alpha),u_2(\alpha))
=:(v_1(\alpha),v_2(\alpha)).
\]
Then for $i,j=1,2$, $j\neq i$, we have
\begin{equation}\label{eq:identity1}
-\Delta v_i+(V_i(x)+\omega_i) v_i+\omega_i' u_i=\gamma(3\m_iu_i^2v_i+\b v_iu_j^2+2\b u_1u_2v_j)+\gamma' u_i (\m_iu_i^2+\b u_j^2)
\end{equation}
and identity \eqref{eq:identity_1/2} hold. By taking $(\phi_1,\phi_2)=(v_1,v_2)$ in \eqref{eq:action_convex}, and using \eqref{eq:identity1}, \eqref{eq:identity_1/2}, we deduce
\[
\begin{split}
\mathcal{A}''_{\gamma,\omega_1,\omega_2}(u_1,u_2)[(v_1,v_2),(v_1,v_2)]
    = \sum_{i=1}^2\int_\Omega (|\nabla v_i|^2+(V_i(x)+\omega_i) v_i^2-3\gamma \m_i u_i^2v_i^2)\, dx \\
 -\gamma \b \int_\Omega (v_1^2u_2^2+4u_1u_2v_1v_2+u_1^2v_2^2)\, dx \\
  =\gamma' \int_\Omega(\m_1 u_1^3v_1+\m_2u_2^3v_2+\b u_1u_2(v_1u_2+u_1v_2))\, dx =\frac{\gamma'}{2\gamma}>0,
\end{split}
\]
which yields $\gamma'>0$.
\end{proof}

\begin{remark}
In the assumptions of the previous lemma, proceeding very similarly to \cite[Lemma 5.6]{NorisTavaresVerzini2013}, it is also possible to prove that $ \omega_1'(\alpha)\rho_1+\omega_2'(\alpha)\rho_2 <0 $.
\end{remark}

\begin{proof}[End of the proof of Theorem \ref{thm:defocusing_weak_interaction}]
Combining Lemma \ref{lemma:defocusing_weak_nondegenerate} with Lemma \ref{lem:small_mass}, and
proceeding as in \cite[Proposition 5.4]{NorisTavaresVerzini2013} we obtain that $\mathcal{S}$ is a
smooth curve which can be parameterized by a unique map in $\alpha$. Theorem \ref{thm:intro1} and
\ref{thm:intro2} apply, providing the existence (and uniqueness) of the corresponding family of
standing waves, which are stable by Lemma \ref{lem:defocusing_weak_gamma_increasing}.  Finally,
by minimizing the energy
\[
\mathcal{E}_{\gamma}(u_1,u_2)=\frac12\|(u_1,u_2)\|^2_{\Hcal}-\gamma F(u_1,u_2)
\]
with $\mathcal{Q}(u_i)=\rho_i$, we obtain existence of elements
of $\mathcal{S}$ for every $\gamma>0$
\end{proof}

\small
\subsection*{Acknowledgments}
H. Tavares is supported by Funda\c c\~ao para a Ci\^encia e Tecnologia
through the program Investigador FCT. G. Verzini is partially supported  by the
PRIN-2012-74FYK7 Grant: ``Variational and perturbative aspects of nonlinear differential problems''.
The three of us are partially supported by the project ERC
Advanced Grant  2013 n. 339958: ``Complex Patterns for Strongly Interacting Dynamical Systems -
COMPAT''.

%%%%%%%%%%%%%%%%%%%%%%%%%%%%%%%%%%%%%%%%%%%%%%%%%%%

%\bibliographystyle{abbrv}
%\bibliography{ntv2}
\end{document}